\def\max{\operatorname{max}}
 \newcommand{\IN}[0]{\mathbb{N}}
 \newcommand{\IZ}[0]{\mathbb{Z}}
\newcommand{\CEE}[0]{\mathcal{E}_{\rm EVEN}}
 \renewcommand{\CD}[0]{\mathcal{D}}
\newcommand{\CE}[0]{\mathcal{E}} \newcommand{\CF}[0]{\mathcal{F}}
 \newcommand{\CT}[0]{\mathcal{T}}
\newcommand {\stab}{{\rm Stab}}
\newtheorem{theorem}{Theorem}[section]
\newtheorem*{theorem*}{Theorem}%ADDED
\newtheorem*{proposition*}{Proposition}%ADDED
\newtheorem{proposition}[theorem]{Proposition}
\newtheorem{lemma}[theorem]{Lemma}
\newtheorem*{lemma*}{Lemma}%ADDED
\newtheorem{example}[theorem]{Example}
\newtheorem{definition}[theorem]{Definition}
\newtheorem{corollary}[theorem]{Corollary}
\newtheorem{remark}[theorem]{Remark}
\newtheorem{convention}{Convention}[section] 
\numberwithin{equation}{section}
\begin{document}
\title[The  planar $3$-colorable subgroup $\mathcal{E}$ of $F$ and its even part]
{The planar $3$-colorable subgroup $\mathcal{E}$ of Thompson's group $F$ and its even part}
\author{Valeriano Aiello} 
\address{Valeriano Aiello,
Dipartimento di Matematica, Universit\`a di Roma La Sapienza, P.le Aldo Moro
5, 00185 Roma, Italy}\email{valerianoaiello@gmail.com}
\author{Tatiana Nagnibeda} 
\address{Tatiana Nagnibeda,
Section de Math\'  ematiques, Universit\' e de Gen\` eve, Rue du Conseil-Gén\'eral, 7-9, 1205 Gen\`eve, Swtizerland
}\email{tatiana.smirnova-nagnibeda@unige.ch}

\begin{abstract}
We study the planar $3$-colorable subgroup $\mathcal{E}$ of Thompson's group $F$  and its even part $\CEE$.
The latter is obtained by cutting $\CE$ with a finite index subgroup of $F$ isomorphic to $F$, namely the 
rectangular subgroup $K_{(2,2)}$.  
We show that  
the even part $\CEE$ of the planar $3$-colorable subgroup admits a description in terms of stabilisers of suitable subsets of dyadic rationals.
As a consequence   $\CEE$ is closed in the sense of Golan and Sapir. 
We then study three quasi-regular representations associated with $\CEE$:
two are shown to be irreducible and one to be reducible.
\end{abstract}

\maketitle

%\tableofcontents

\section*{Introduction}
In \cite{Jo14} and \cite{Jo16} Vaughan Jones developed a new method to construct unitary representations of   Thompson groups.
There are two families of representations,
one coming from 
planar algebras or tensor categories \cite{jo2, AJ} and 
another  coming from Pythagorean C$^*$-algebras %\cite{BJ} 
  \cite{BJ, BP, AP, BrWi}. 
  There is also the representation   in \cite{BJ2}, defined through an isometry $R$ from a Hilbert space $H$ to $H\otimes H$.
Such  representations 
contain a vector called the vacuum vector $\Omega$ (that for the first family is canonical)  and % in the case of the former group of representations
 the corresponding 
coefficients   
$\langle \pi(g)\Omega,\Omega\rangle$
often
admit a nice %topological or combinatorial 
interpretation in terms of knot and graph invariants, see \cite{Jo14, Ren, ACJ, ABC, %BJ2,
 AiCo1, AiCo2} and also \cite{AB1, AB2, A2, GP}. % and also \cite{}.
In this paper we only consider the Thompson group $F$ and we think of its elements as equivalences classes of tree diagrams, \cite{CFP}.

Several interesting subgroups of $F$ arise as stabilisers of vacuum vectors in Jones' representations. 
For instance, the oriented subgroup $\vec{F}$ studied in \cite{GS, GS2, Ren}, 
the $3$-colorable subgroup $\CF$ \cite{Ren, TV2}  and the parabolic subgroups, i.e., stabilisers of points in $(0, 1)$, studied in \cite{Sav, Sav2, GS3, BJ, DF}.
This  paper
focuses on 
another subgroup $\CE$  that arises in the same way, see \cite{Jo19}, that we call 
the planar $3$-colorable subgroup.

 While analyzing $\CE$, one is 
 naturally brought to consider its even part $\CEE$,
 which is obtained by 
\emph{cutting} $\CE$ with the rectangular subgroup $K_{(2,2)}
:=\{f\in F\; | \; \log_2 f'(0)\in 2 \IZ, \log_2 f'(1)\in 2\IZ 		\}
$. 
In fact, for each pair of integers $(m,n)$ there is a rectangular subgroup $K_{(m,n)}$, consisting of those elements of $f\in F$ for which the logarithms (in base $2$) of the derivates at $0$ and $1$ are in $m\IZ$ and $n\IZ$, respectively.  As will be shown in Lemma \ref{lemmarootcol}, for the elements of $\CE$ the integers $m$ and $n$ must have the same parity and, therefore, the most natural choice is to \emph{cut} $\CE$ with $K_{(2,2)}$. However, it would be interesting to study the subgroups obtained by intersecting the planar $3$-colorable subgroup with all the other rectangular subgroups.

For some countable families of Jones' representations the coefficient  of the vacuum vector $\langle \pi(g)\Omega, \Omega\rangle$ has been realized (up to a normalisation) as an evaluation of the chromatic polynomial of a certain graph $\Gamma(g)$ associated with the equivalence class of the tree diagram $g\in F$.
The graph construction is specific for each family of representations and we explain it below.
In each family, the representations that correspond to evaluations at integers are of special interest. In particular, the subgroups
$\CF$ and $\CE$ are the stabilisers of the vacuum vector in the unique representations of the respective families
which correspond to an evaluation at a rational number (that happens to be equal to $3$). These two representations are also realized as quasi-regular representations of $F$ associated with $\CF$ and $\CE$ and they have been shown to be irreducible in \cite{Jo19, TV2}.
In the case of $\CF$, the graph $\Gamma(g)$ is constructed as follows. We take a tree diagram and we \emph{close} it as shown in Figure \ref{differentcoefficients}. Then $\Gamma(g)$ is the dual graph of this closure of the tree diagram. An element of $g$ in $F$ is in $\CF$ precisely when $\Gamma(g)$ is $3$-colorable.
In the other family of representations, $\Gamma(g)$ is just the dual graph of the tree diagram.
There is also a third graph $\Gamma(g)$ that appears in the description of the vacuum coefficient of a family of Jones' representations of $F$, see \cite[Definition 4.1.2]{Jo14}. 
In this case the vacuum coefficient is (up to a normalisation) also an evaluation of the chromatic polynomial Chr$_{\Gamma(g)}(Q)$. The stabiliser of the vacuum vector in the representation corresponding to
 $Q=2$ yields the oriented subgroup $\vec{F}$, which has been studied in \cite{GS, GS2, A}. See also \cite{TV} for a study of its ternary version $\vec{F}_3$.

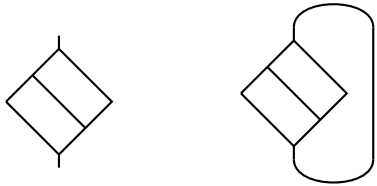
\begin{figure}
%\\
\phantom{This text will be invisible} 
\[
\begin{tikzpicture}[x=.35cm, y=.35cm,
    every edge/.style={
        draw,
      postaction={decorate,
                    decoration={markings}
                   }
        }
]

\node at (-1.25,-3) {\;};

\draw[thick] (0,0) -- (2,2)--(4,0)--(2,-2)--(0,0);
 \draw[thick] (1,1) -- (2,0)--(3,-1);
 \draw[thick] (2,2) -- (2,2.5);
 \draw[thick] (2,-2) -- (2,-2.5);
 
\end{tikzpicture}\qquad
\;\;
\begin{tikzpicture}[x=.35cm, y=.35cm,
    every edge/.style={
        draw,
      postaction={decorate,
                    decoration={markings}
                   }
        }
]

\node at (-1.25,-3) {\;};

\draw[thick] (0,0) -- (2,2)--(4,0)--(2,-2)--(0,0);
 \draw[thick] (1,1) -- (2,0)--(3,-1);

 \draw[thick] (2,2)--(2,2.5);

 \draw[thick] (2,-2)--(2,-2.5);

\draw[thick] (2,2.5) to[out=90,in=90] (5,2.5);  
\draw[thick] (2,-2.5) to[out=-90,in=-90] (5,-2.5);  
 \draw[thick] (5,-2.5)--(5,2.5);

\end{tikzpicture} 
\]
\caption{A tree diagram and its closure.}\label{differentcoefficients}
\end{figure}

The    paper is structured as follows.
After some preliminaries in
 Section \ref{sec1},  
in Section \ref{sec2} we define the subgroups $\CE$ and $\CEE$
and determine their generators in Theorems \ref{genE} and \ref{genE1}.
We observe that $\CE$ is not a maximal subgroup in $F$. %, see Remark \ref{remarkmax}, 
% and present some evidence that it might be a weakly maximal subgroup.
  Then, we show that $\CE$ and $\CF$ together generate a finite index subgroup of $F$. 
We also  investigate how $\CE$ and $\CEE$ act on the set of dyadic rationals in Section \ref{sec3}. 
In Theorem \ref{lemma-inclu}, we describe $\CEE$ 
as the intersection of stabilizers of certain subsets of dyadic rationals in $(0,1)$.
%We show  in Proposition \ref{actiontransitive} that the action of $\CE$ has two orbits, while $\CEE$ is described in terms of stabilisers of suitable subsets. As a consequence of our analysis we also discover that 
This implies that
  $\CEE$ is  closed in the sense of \cite{GS2},
see
Corollary \ref{CEEclosed}.

In contrast with $\vec{F}$, $\CF$ which are both closed and self-commensurating, the subgroup $\CE$ turns out not to be closed.
However it follows from a result of Jones  
 \cite{Jo19} that $\CE$ coincides with its commensurator.
 As for $\CEE$, 
 the main result of the paper is Theorem \ref{theo2}, where we prove that its commensurator in $F$ is $\CE$ and that it self-commensurating in
 $K_{(2,1)}$ and $K_{(2,2)}$. In particular, since 
$K_{(2,1)}$ and $K_{(2,2)}$ are isomorphic with $F$ via explicit isomorphisms $\alpha: F\to K_{(2,1)}$, $\theta: F\to K_{(2,2)}$ (defined in \cite{TV2} and also described in Section \ref{sec3}), 
the quasi-regular representations of $F$ associated with $\alpha^{-1}(\CEE)$ and $\theta^{-1}(\CEE)$ are irreducible. %The commensurator of $\CEE$ in $F$ is $\CE$ and the corresponding quasi-regular representation is reducible.
 
\section{Preliminaries and notation}\label{sec1}
In this section we recall the definitions of Thompson's group $F$, of the Brown-Thompson groups $F_k$ and of the $3$-colorable subgroup $\CF$.  
The interested reader is referred to \cite{CFP} and \cite{B} for more information on $F$, to \cite{Brown} for $F_k$, to \cite{Ren} and \cite{TV2} for $\CF$.

Thompson's group $F$ is the group of all piecewise linear homeomorphisms of the unit interval $[0,1]$ that are differentiable everywhere except at finitely many dyadic rationals numbers and such that on the intervals of differentiability the derivatives are powers of $2$. We adopt the standard notation: $f\cdot g(t)=g(f(t))$.

Thompson's group   has the following infinite presentation
$$
F=\langle x_0, x_1, x_2, \ldots \; | \; x_nx_k=x_kx_{n+1} \quad \forall \; k<n\rangle\, .
$$
The monoid generated by $x_0, x_1, x_2, \ldots$ is denoted by $F_+$. Its elements are said to be positive. Note that $x_0$ and $x_1$ are enough to generate $F$.

Every element $g$ of $F$ can be written in a unique way as 
$$
x_0^{a_0}\cdots x_n^{a_n}x_0^{-b_0}\cdots x_0^{-b_0}
$$
where $a_0$, \ldots , $a_n$, $b_0$, \ldots , $b_n\in \IN_0$,
exactly one between $a_n$ and $b_n$ is non-zero, 
and if $a_i\neq 0$ and $b_i\neq 0$, then $a_{n+1}\neq 0$ or $b_{n+1}\neq 0$ for all $i$. This is the normal form of $g$.

\begin{figure}
%\\
\phantom{This text will be invisible} 
\[
\begin{tikzpicture}[x=.35cm, y=.35cm,
    every edge/.style={
        draw,
      postaction={decorate,
                    decoration={markings}
                   }
        }
]

\node at (-1.5,0) {$\scalebox{1}{$x_0=$}$};
\node at (-1.25,-3) {\;};

\draw[thick] (0,0) -- (2,2)--(4,0)--(2,-2)--(0,0);
 \draw[thick] (1,1) -- (2,0)--(3,-1);

 \draw[thick] (2,2)--(2,2.5);

 \draw[thick] (2,-2)--(2,-2.5);

\end{tikzpicture}\qquad
\;\;
\begin{tikzpicture}[x=.35cm, y=.35cm,
    every edge/.style={
        draw,
      postaction={decorate,
                    decoration={markings}
                   }
        }
]

\node at (-3.5,0) {$\scalebox{1}{$x_1=$}$};
\node at (-1.25,-3.25) {\;};

\draw[thick] (2,2)--(1,3)--(-2,0)--(1,-3)--(2,-2);

\draw[thick] (0,0) -- (2,2)--(4,0)--(2,-2)--(0,0);
 \draw[thick] (1,1) -- (2,0)--(3,-1);

 \draw[thick] (1,3)--(1,3.5);
 \draw[thick] (1,-3)--(1,-3.5);

\end{tikzpicture}
\]
\caption{The generators of $F=F_2$.}\label{genThompsonF}
\end{figure}
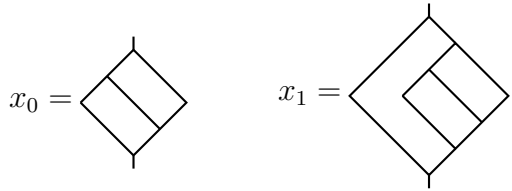
 The projection of $F$ onto its abelianisation is denoted by $\pi: F\to F/[F,F]=\IZ\oplus \IZ$ and it admits a nice interpretation when $F$ is seen as a group of homeomorphisms: $\pi(f)=(\log_2 f'(0),\log_2 f'(1))$. 
 
 There is a family of groups generalizing the Thompson group: the Brown-Thompson groups.
For any $k\geq 2$, the Brown-Thompson group $F_k$ may be defined by the following presentation  
$$
\langle y_0, y_1, \ldots \; | \; y_ny_l=y_ly_{n+k-1} \quad \forall \; l<n\rangle\, .
$$
The elements $y_0, y_1, \ldots , y_{k-1}$ generate $F_k$.

 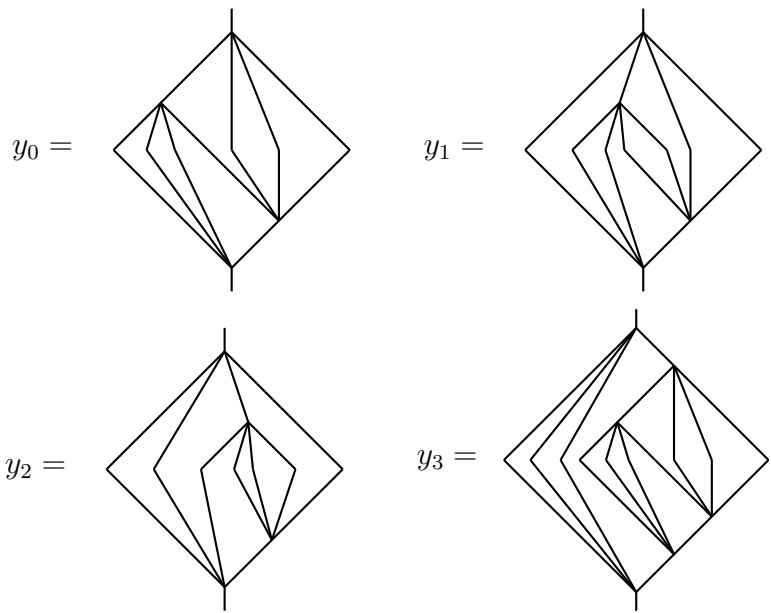
\begin{figure}
%\\
\phantom{This text will be invisible} 
\[
\begin{tikzpicture}[x=1.25cm, y=1.25cm,
    every edge/.style={
        draw,
      postaction={decorate,
                    decoration={markings}
                   }
        }
]

%SOPRA
\draw[thick] (0,0)--(.5,.5)--(1,0);
\draw[thick] (0.5,0.5)--(.35,0);%--(1,0)--(.5,-.5)--(0,0);
\draw[thick] (0.5,0.5)--(.65,0);%--(1,0)--(.5,-.5)--(0,0);
\node at (0,-1.2) {$\;$};
\node at (-.75,0) {$\scalebox{1}{$y_0=$}$};

\draw[thick] (1.25,1.5)--(1.25,1.25);%--(1,0)--(.5,-.5)--(0,0);
\draw[thick] (1.25,-1.5)--(1.25,-1.25);%--(1,0)--(.5,-.5)--(0,0);

\draw[thick] (0.5,0.5)--(1.25,1.25);%--(1,0)--(.5,-.5)--(0,0);
\draw[thick] (1.25,0)--(1.25,1.25);%--(1,0)--(.5,-.5)--(0,0);
\draw[thick] (1.75,0)--(1.25,1.25);%--(1,0)--(.5,-.5)--(0,0);
\draw[thick] (2.5,0)--(1.25,1.25);%--(1,0)--(.5,-.5)--(0,0);

\draw[thick] (2.5,0)--(1.25,-1.25)--(0,0);%--(1,0)--(.5,-.5)--(0,0);
\draw[thick] (1.25,-1.25)--(0.35,0);%--(1,0)--(.5,-.5)--(0,0);
\draw[thick] (1.25,-1.25)--(0.65,0);%--(1,0)--(.5,-.5)--(0,0);

\draw[thick] (1.75,-.75)--(1,0);%--(1,0)--(.5,-.5)--(0,0);
\draw[thick] (1.75,-.75)--(1.25,0);%--(1,0)--(.5,-.5)--(0,0);
\draw[thick] (1.75,-.75)--(1.75,0);%--(1,0)--(.5,-.5)--(0,0);

\end{tikzpicture}
\qquad
\begin{tikzpicture}[x=1.25cm, y=1.25cm,
    every edge/.style={
        draw,
      postaction={decorate,
                    decoration={markings}
                   }
        }
]

%SOPRA
\draw[thick] (0.5,0)--(1,.5)--(1.5,0);
\draw[thick] (1,0.5)--(.85,0);%--(1,0)--(.5,-.5)--(0,0);
\draw[thick] (1,0.5)--(1.05,0);%--(1,0)--(.5,-.5)--(0,0);
\node at (0,-1.2) {$\;$};
\node at (-.75,0) {$\scalebox{1}{$y_1=$}$};

\draw[thick] (0,0)--(1.25,1.25);%--(1,0)--(.5,-.5)--(0,0);
\draw[thick] (1,.5)--(1.25,1.25);%--(1,0)--(.5,-.5)--(0,0);
\draw[thick] (1.75,0)--(1.25,1.25);%--(1,0)--(.5,-.5)--(0,0);
\draw[thick] (2.5,0)--(1.25,1.25);%--(1,0)--(.5,-.5)--(0,0);

\draw[thick] (2.5,0)--(1.25,-1.25)--(0,0);%--(1,0)--(.5,-.5)--(0,0);
\draw[thick] (1.25,-1.25)--(0.5,0);%--(1,0)--(.5,-.5)--(0,0);
\draw[thick] (1.25,-1.25)--(0.85,0);%--(1,0)--(.5,-.5)--(0,0);

\draw[thick] (1.75,-.75)--(1.05,0);%--(1,0)--(.5,-.5)--(0,0);
\draw[thick] (1.75,-.75)--(1.5,0);%--(1,0)--(.5,-.5)--(0,0);
\draw[thick] (1.75,-.75)--(1.75,0);%--(1,0)--(.5,-.5)--(0,0);

\draw[thick] (1.25,1.5)--(1.25,1.25);%--(1,0)--(.5,-.5)--(0,0);
\draw[thick] (1.25,-1.5)--(1.25,-1.25);%--(1,0)--(.5,-.5)--(0,0);

\end{tikzpicture}
\]
\[
\begin{tikzpicture}[x=1.25cm, y=1.25cm,
    every edge/.style={
        draw,
      postaction={decorate,
                    decoration={markings}
                   }
        }
]

%SOPRA
\draw[thick] (1,0)--(1.5,.5)--(2,0);
\draw[thick] (1.5,0.5)--(1.35,0);%--(1,0)--(.5,-.5)--(0,0);
\draw[thick] (1.5,0.5)--(1.55,0);%--(1,0)--(.5,-.5)--(0,0);
\node at (0,-1.2) {$\;$};
\node at (-.75,0) {$\scalebox{1}{$y_2=$}$};

\draw[thick] (0,0)--(1.25,1.25);%--(1,0)--(.5,-.5)--(0,0);
\draw[thick] (.5,0)--(1.25,1.25);%--(1,0)--(.5,-.5)--(0,0);
\draw[thick] (1.5,.5)--(1.25,1.25);%--(1,0)--(.5,-.5)--(0,0);
\draw[thick] (2.5,0)--(1.25,1.25);%--(1,0)--(.5,-.5)--(0,0);

\draw[thick] (2.5,0)--(1.25,-1.25)--(0,0);%--(1,0)--(.5,-.5)--(0,0);
\draw[thick] (1.25,-1.25)--(0.5,0);%--(1,0)--(.5,-.5)--(0,0);
\draw[thick] (1.25,-1.25)--(1,0);%--(1,0)--(.5,-.5)--(0,0);

\draw[thick] (1.75,-.75)--(1.35,0);%--(1,0)--(.5,-.5)--(0,0);
\draw[thick] (1.75,-.75)--(1.55,0);%--(1,0)--(.5,-.5)--(0,0);
\draw[thick] (1.75,-.75)--(2,0);%--(1,0)--(.5,-.5)--(0,0);

\node at (1.75,.75) {$\;$};

\draw[thick] (1.25,1.5)--(1.25,1.25);%--(1,0)--(.5,-.5)--(0,0);
\draw[thick] (1.25,-1.5)--(1.25,-1.25);%--(1,0)--(.5,-.5)--(0,0);

\end{tikzpicture}
\qquad
\begin{tikzpicture}[x=1cm, y=1cm,
    every edge/.style={
        draw,
      postaction={decorate,
                    decoration={markings}
                   }
        }
]

\draw[thick] (.75,1.75)--(.75,2);%--(1,0)--(.5,-.5)--(0,0);
\draw[thick] (.75,-1.75)--(.75,-2);%--(1,0)--(.5,-.5)--(0,0);

%SOPRA
\draw[thick] (0,0)--(.5,.5)--(1,0);
\draw[thick] (0.5,0.5)--(.35,0);%--(1,0)--(.5,-.5)--(0,0);
\draw[thick] (0.5,0.5)--(.65,0);%--(1,0)--(.5,-.5)--(0,0);
\node at (1.75,-.75) {$\;$};
\node at (-1.75,0) {$\scalebox{1}{$y_3=$}$};

\draw[thick] (0.5,0.5)--(1.25,1.25);%--(1,0)--(.5,-.5)--(0,0);
\draw[thick] (1.25,0)--(1.25,1.25);%--(1,0)--(.5,-.5)--(0,0);
\draw[thick] (1.75,0)--(1.25,1.25);%--(1,0)--(.5,-.5)--(0,0);
\draw[thick] (2.5,0)--(1.25,1.25);%--(1,0)--(.5,-.5)--(0,0);

\draw[thick] (-1,0)--(.75,1.75)--(1.25,1.25);%--(1,0)--(.5,-.5)--(0,0);
\draw[thick] (-1,0)--(.75,-1.75)--(1.25,-1.25);%--(1,0)--(.5,-.5)--(0,0);
\draw[thick] (-.65,0)--(0.75,1.75);%--(1,0)--(.5,-.5)--(0,0);
\draw[thick] (-.25,0)--(0.75,1.75);%--(1,0)--(.5,-.5)--(0,0);
\draw[thick] (-.65,0)--(0.75,-1.75);%--(1,0)--(.5,-.5)--(0,0);
\draw[thick] (-.25,0)--(0.75,-1.75);%--(1,0)--(.5,-.5)--(0,0);

\draw[thick] (2.5,0)--(1.25,-1.25)--(0,0);%--(1,0)--(.5,-.5)--(0,0);
\draw[thick] (1.25,-1.25)--(0.35,0);%--(1,0)--(.5,-.5)--(0,0);
\draw[thick] (1.25,-1.25)--(0.65,0);%--(1,0)--(.5,-.5)--(0,0);

\draw[thick] (1.75,-.75)--(1,0);%--(1,0)--(.5,-.5)--(0,0);
\draw[thick] (1.75,-.75)--(1.25,0);%--(1,0)--(.5,-.5)--(0,0);
\draw[thick] (1.75,-.75)--(1.75,0);%--(1,0)--(.5,-.5)--(0,0);

\end{tikzpicture}
\]
\caption{The generators of the Brown-Thompson group $F_4$.}\label{genThompsonF4}
\end{figure}

 Going back to $F$,
there is still another description  which is relevant to this paper: the elements of $F$ can be seen as pairs $(T_+,T_-)$ of planar binary rooted trees (with the same number of leaves).  We draw one tree upside down on top of the other; $T_+$ is  the top tree, while $T_-$ is the bottom tree.
Any pair of trees $(T_+,T_-)$ represented in this way is called a tree diagram.  Two pairs of trees are said to be equivalent if they differ by pairs of opposing carets, namely
\[\begin{tikzpicture}[x=.5cm, y=.5cm,
    every edge/.style={
        draw,
      postaction={decorate,
                    decoration={markings}
                   }
        }
]

%SOPRA
 \draw[thick] (0,0)--(1,1)--(2,0)--(1,-1)--(0,0); 
 \draw[thick] (1,1.5)--(1,1); 
 \draw[thick] (1,-1.5)--(1,-1); 
\node at (0,-1.2) {$\;$};
%\node at (2.5,0.25) {$\scalebox{1.5}{$\stackrel{\Phi}{\mapsto}$}$};
\node at (3.5,0) {$\scalebox{1}{$\leftrightarrow$}$};

\end{tikzpicture}
\begin{tikzpicture}[x=.5cm, y=.5cm,
    every edge/.style={
        draw,
      postaction={decorate,
                    decoration={markings}
                   }
        }
]

%SOPRA
  \draw[thick] (1,1.5)--(1,-1.5); 
\node at (0,-1.2) {$\;$};
%\node at (2.5,0.25) {$\scalebox{1.5}{$\stackrel{\Phi}{\mapsto}$}$};
 
\end{tikzpicture}
\]
Every equivalence class of pairs of trees (i.e. an element of $F$) gives rise to exactly one tree diagram which is reduced, in the sense that 
the number of its vertices is minimal, \cite{B}.  See Figure \ref{genThompsonF} for their description in terms of pairs of binary trees.
Similarly, the elements of $F_k$ are described by pairs $k$-ary trees, see e.g. the generators of $F_4$ that are displayed in Figure \ref{genThompsonF4}.

\begin{convention}\label{conventiondrawings}
We make a convention about how we draw trees on the plane. The roots of our planar binary trees are drawn as vertices of degree $3$ and hence each tree diagram has the uppermost and lowermost vertices of degree  $1$, which lie respectively on the lines   $y=1$ and $y=-1$.
 The leaves of the trees sit on the $x$-axis, precisely on the non-negative integers. 
\end{convention}

Any tree diagram partitions the strip bounded by the lines $y=1$ and $y=-1$ in regions.
This strip may or may not be $3$-colorable, i.e., it may or may not be 
 possible to assign the colors $\IZ_3=\{0,1, 2\}$ to the regions of the strip in such a way that if two regions share an edge, they   have different colors. % (or, equivalently, the dual graph admits a $3$-vertex coloring). 
  By convention, we assign the following colours to the regions near the roots %(the roots are the marked vertices in the figure)
  \begin{eqnarray}\label{convention-colors}
\begin{tikzpicture}[x=.5cm, y=.5cm,
    every edge/.style={
        draw,
      postaction={decorate,
                    decoration={markings}
                   }
        }
] 

 \draw[thick] (0,0) -- (1,1)--(2,0);
 \draw[thick] (1,1) -- (1,2);

%  \fill (1,2)  circle[radius=1.5pt];

%\node at (-10,1) {$(\star)$};

\node at (0,1) {$0$};
\node at (2,1) {$1$};
\node at (1,0.2) {$2$};

 \draw[thick] (-1,2) -- (3,2);

%\node at (-3,2) {$y=1$};
% \draw[thick] (-2.5,2) -- (4,2);

\end{tikzpicture}
\qquad\qquad 
\begin{tikzpicture}[x=.5cm, y=.5cm,
    every edge/.style={
        draw,
      postaction={decorate,
                    decoration={markings}
                   }
        }
] 

 \draw[thick] (-1,0) -- (3,0);

 \draw[thick] (0,2) -- (1,1)--(2,2);
 \draw[thick] (1,1) -- (1,0);

%  \fill (1,0)  circle[radius=1.5pt];

%\node at (10,1) {$\;$};

\node at (0,1) {$0$};
\node at (2,1) {$1$};
\node at (1,1.8) {$2$};

\end{tikzpicture}
\end{eqnarray}
Once we make this convention, if the strip is $3$-colorable, 
there exists a unique colouring. 
The \textbf{$3$-colorable subgroup} $\CF$  consists of the elements of $F$ for which the corresponding  strip is $3$-colorable. 
For example, this is the strip corresponding to $x_0$ (which is not $3$-colorable)
\[
\begin{tikzpicture}[x=.35cm, y=.35cm,
    every edge/.style={
        draw,
      postaction={decorate,
                    decoration={markings}
                   }
        }
]

 \draw[thick] (-1,2.5)--(5,2.5);
 \draw[thick] (-1,-2.5)--(5,-2.5);

 \node at (-1.25,-3) {\;};

\draw[thick] (0,0) -- (2,2)--(4,0)--(2,-2)--(0,0);
 \draw[thick] (1,1) -- (2,0)--(3,-1);

 \draw[thick] (2,2)--(2,2.5);

 \draw[thick] (2,-2)--(2,-2.5);

\end{tikzpicture}
\]
 The subgroup  $\CF$
 was shown to be
  isomorphic to the Brown-Thompson group $F_4$ by Ren in \cite{Ren}
  by means of the isomorphism %(with domain $F_4$ and range $\CF$) 
  $\gamma: F_4\to \CF$
  obtained by
   replacing every $5$-valent vertex of $4$-ary trees by the complete binary tree with $4$ leaves 
(in the sequel we refer to this tree as the \emph{basic tree}, see Figure \ref{fig-ren-map-2}), %, \cite{BCS}.
\cite{BCS}.
The images of   $y_0$, $y_1$, $y_2$, $y_3$ yield the following elements 
$w_0:=x_0^2x_1x_2^{-1}$,
$w_1:=x_0x_1^2x_0^{-1}$,
$w_2:=x_1^2x_3x_2^{-1}$,
$w_3:=x_2^2x_3x_4^{-1}$ (see Figure \ref{genCF}). 
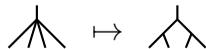
\begin{figure}
\[\begin{tikzpicture}[x=.75cm, y=.75cm,
    every edge/.style={
        draw,
      postaction={decorate,
                    decoration={markings}
                   }
        }
]

%SOPRA
\draw[thick] (0,0)--(.5,.5)--(1,0);
\draw[thick] (0.5,0.5)--(.5,.75);%--(1,0)--(.5,-.5)--(0,0);
\draw[thick] (0.5,0.5)--(.35,0);%--(1,0)--(.5,-.5)--(0,0);
\draw[thick] (0.5,0.5)--(.65,0);%--(1,0)--(.5,-.5)--(0,0);
\node at (0,-1.2) {$\;$};
%\node at (2.5,0.25) {$\scalebox{1.5}{$\stackrel{\Phi}{\mapsto}$}$};
\node at (1.75,0.25) {$\scalebox{1}{$\mapsto$}$};

\end{tikzpicture}
\begin{tikzpicture}[x=.75cm, y=.75cm,
    every edge/.style={
        draw,
      postaction={decorate,
                    decoration={markings}
                   }
        }
]

\draw[thick] (0.5,0.75)--(.5,.5);%--(1,0)--(.5,-.5)--(0,0);
\draw[thick] (0.65,0)--(.75,.25);%--(1,0)--(.5,-.5)--(0,0);
\draw[thick] (0.35,0)--(.25,.25);%--(1,0)--(.5,-.5)--(0,0);
\draw[thick] (0,0)--(.5,.5)--(1,0);
\node at (0,-1.2) {$\;$};
\end{tikzpicture}
\]
\caption{Ren's map  $\Phi$ 
%: \CT_3\to \CT_2$.
 from the set of $4$-ary trees to binary trees.  }
%$\mbox{              }  $ \qquad \vpic {fig1aa.png} {.75in} \qquad $X=$\vpic {fig1bbbb} {1.25in} $\thicksim$ \vpic {fig1cc} {1.25in} \qquad \;
  \label{fig-ren-map-2}
\end{figure} 
This subgroup was also studied by the authors in \cite{TV2} in relation to maximal subgroups of infinite index of $F$.   \begin{figure}
\phantom{This text will be invisible} 
\[
\begin{tikzpicture}[x=.75cm, y=.75cm,
    every edge/.style={
        draw,
      postaction={decorate,
                    decoration={markings}
                   }
        }
]

\node at (-1.5,-.25) {$\scalebox{1}{$w_0=$}$};

\draw[thick] (0,0)--(2,2)--(4,0);
\draw[thick] (0,0)--(2,-2)--(4,0);
\draw[thick] (1,0)--(1.5,0.5)--(2,0);
\draw[thick] (1.5,0.5)--(1,1);
\draw[thick] (3,0)--(1.5,1.5);
\draw[thick] (3,0)--(2.5,-.5); %--(3,-1);
\draw[thick] (4,0)--(2,2);
\draw[thick] (4,0)--(3.5,-.5);
\draw[thick] (1,0)--(2.5,-1.5);
\draw[thick] (2,0)--(3,-1);

\draw[thick] (2,2)--(2,2.25);
\draw[thick] (2,-2)--(2,-2.25);

\node at (0,-1.2) {$\;$};
\end{tikzpicture}
\qquad 
\begin{tikzpicture}[x=.75cm, y=.75cm,
    every edge/.style={
        draw,
      postaction={decorate,
                    decoration={markings}
                   }
        }
]

\node at (-1.5,-.25) {$\scalebox{1}{$w_1=$}$};

\draw[thick] (0,0)--(2,2)--(4,0);
\draw[thick] (0,0)--(2,-2)--(4,0);

\draw[thick] (1,0)--(1.5,0.5)--(2,0);
\draw[thick] (2,0)--(3,-1);
\draw[thick] (1,0)--(.5,-.5);
\draw[thick] (1.5,.5)--(2,1)--(3,0);
\draw[thick] (3,0)--(3.5,-.5);
\draw[thick] (2,1)--(1.5,1.5);

\draw[thick] (2,2)--(2,2.25);
\draw[thick] (2,-2)--(2,-2.25);

\node at (0,-1.2) {$\;$};
\end{tikzpicture}
\]

\[
\begin{tikzpicture}[x=.75cm, y=.75cm,
    every edge/.style={
        draw,
      postaction={decorate,
                    decoration={markings}
                   }
        }
]

\node at (-2.5,-.25) {$\scalebox{1}{$w_2=$}$};
 
\draw[thick] (-1,0)--(1.5,2.5)--(2,2);
\draw[thick] (-1,0)--(1.5,-2.5)--(2,-2);

\draw[thick] (0,0)--(2,2)--(4,0);
\draw[thick] (0,0)--(2,-2)--(4,0);

\draw[thick] (2,0)--(2.5,.5)--(3,0);

\draw[thick] (1,0)--(.5,.5);
\draw[thick] (2.5,.5)--(1.5,1.5);

\draw[thick] (3,0)--(3.5,-.5);
\draw[thick] (2,0)--(1.5,-.5)--(1,0);
\draw[thick] (1.5,-.5)--(2.5,-1.5);

\draw[thick] (1.5,2.5)--(1.5,2.75);
\draw[thick] (1.5,-2.5)--(1.5,-2.75);

\node at (0,-1.2) {$\;$};
\end{tikzpicture}
\qquad 
\begin{tikzpicture}[x=.75cm, y=.75cm,
    every edge/.style={
        draw,
      postaction={decorate,
                    decoration={markings}
                   }
        }
]

\node at (-3.5,-.25) {$\scalebox{1}{$w_3=$}$};
 
\draw[thick] (-1,0)--(1.5,2.5)--(2,2);
\draw[thick] (-1,0)--(1.5,-2.5)--(2,-2);

\draw[thick] (-2,0)--(1,3)--(1.5,2.5);
\draw[thick] (-2,0)--(1,-3)--(1.5,-2.5);

\draw[thick] (0,0)--(2,2)--(4,0);
\draw[thick] (0,0)--(2,-2)--(4,0);
\draw[thick] (1,0)--(1.5,0.5)--(2,0);
\draw[thick] (1.5,0.5)--(1,1);
\draw[thick] (3,0)--(1.5,1.5);
\draw[thick] (3,0)--(2.5,-.5); %--(3,-1);
\draw[thick] (4,0)--(2,2);
\draw[thick] (4,0)--(3.5,-.5);
\draw[thick] (1,0)--(2.5,-1.5);
\draw[thick] (2,0)--(3,-1);

\draw[thick] (1,3)--(1,3.25);
\draw[thick] (1,-3)--(1,-3.25);

\node at (0,-1.2) {$\;$};
\end{tikzpicture}
\]
 \caption{The generators of $\CF$. %These are the images of the standard generators of $F_4$ under the map $\alpha_T$ described in Figure \ref{fig-ren-map-2}.
 }\label{genCF}
\end{figure}

An important role in this article is played by the rectangular subgroups of $F$, which were introduced in \cite{BW} as
\begin{align*}
K_{(a,b)}&:=\{f\in F\; | \; \log_2f'(0)\in a\IZ, \log_2f'(1)\in b\mathbb{Z}\} \qquad a, b\in\IN
\end{align*}
These subgroups can be characterised as the only finite index subgroups of $F$ isomorphic with $F$ \cite[Theorem 1.1]{BW}.
 
Denote by $\{0, 1\}^*$ the set of finite binary words, i.e.,  finite sequences of $0$ and $1$.
There exists a  map $\rho$ between  
finite binary words 
and
the dyadic rationals in the open unit interval $\CD:=\IZ[1/2]\cap (0,1)$ given by the formula $\rho(a_1\ldots a_n):=\sum_{i=1}^n a_i 2^{-i}$ which is bijective when  restricted to finite words ending with $1$ (i.e. $a_n=1$).  
 
Recall that  the group $F$ acts  on $[0,1]$. 
Given a number $t\in [0,1]$ expressed in its binary expansion, it enters into the top of a tree diagram, follows a path towards the root of the bottom tree according to the rules portrayed in 
Figure \ref{compute}, and   emerges at the bottom as the image of $t$ under the element of $F$ represented by the tree diagram, \cite{BM}. 
Note that there is a change of direction only when the number comes across a vertex of degree $3$ (i.e., the number is unchanged when it comes across a leaf). 

\begin{figure}
%\\
\phantom{This text will be invisible} 
 \[
 \begin{tikzpicture}[x=1cm, y=1cm,
    every edge/.style={
        draw,
      postaction={decorate,
                    decoration={markings}
                   }
        }
]

\draw[thick] (0,0) -- (.5,1)--(1,0);
 \draw[thick] (.5,1)--(.5,1.5);

\fill (0,0)  circle[radius=.75pt];
 \fill (1,0)  circle[radius=.75pt];
\fill (.5,1.5)  circle[radius=.75pt];

\node at (.5,1.85) {$\scalebox{1}{$.0\alpha$}$};
\node at (0,-.25) {$\scalebox{1}{$.\alpha$}$};
%\node at (.5,-.25) {$\scalebox{1}{$v$}$};
%\node at (1,-.25) {$\scalebox{1}{$w$}$};

\end{tikzpicture}
\qquad\qquad
\begin{tikzpicture}[x=1cm, y=1cm,
    every edge/.style={
        draw,
      postaction={decorate,
                    decoration={markings}
                   }
        }
]

\draw[thick] (0,0) -- (.5,1)--(1,0);
 \draw[thick] (.5,1)--(.5,1.5);

\fill (0,0)  circle[radius=.75pt];
 \fill (1,0)  circle[radius=.75pt];
\fill (.5,1.5)  circle[radius=.75pt];

\node at (.5,1.85) {$\scalebox{1}{$.1\alpha$}$};
\node at (1,-.25) {$\scalebox{1}{$.\alpha$}$};

\end{tikzpicture}
\qquad
\qquad
\begin{tikzpicture}[x=1cm, y=1cm,
    every edge/.style={
        draw,
      postaction={decorate,
                    decoration={markings}
                   }
        }
]

\draw[thick] (0,1.5) -- (.5,.5)--(1,1.5);
 \draw[thick] (.5,0)--(.5,.5);

\fill (0,1.5)  circle[radius=.75pt];
\fill (.5,0)  circle[radius=.75pt];
\fill (1,1.5)  circle[radius=.75pt];
 
\node at (.5,-.25) {$\scalebox{1}{$.0\alpha$}$};
\node at (0,1.75) {$\scalebox{1}{$.\alpha$}$};
%\node at (.5,1.75) {$\scalebox{1}{$v$}$};
%\node at (1,1.75) {$\scalebox{1}{$w$}$};
\node at (1,-.25) {$\scalebox{1}{}$};

\end{tikzpicture}
\qquad\qquad
\begin{tikzpicture}[x=1cm, y=1cm,
    every edge/.style={
        draw,
      postaction={decorate,
                    decoration={markings}
                   }
        }
]

\draw[thick] (0,1.5) -- (.5,.5)--(1,1.5);
 \draw[thick] (.5,0)--(.5,.5);

\fill (0,1.5)  circle[radius=.75pt];
\fill (.5,0)  circle[radius=.75pt];
\fill (1,1.5)  circle[radius=.75pt];
 
\node at (.5,-.25) {$\scalebox{1}{$.1\alpha$}$};
\node at (1,1.75) {$\scalebox{1}{$.\alpha$}$};
%\node at (.5,1.75) {$\scalebox{1}{$v$}$};
%\node at (1,1.75) {$\scalebox{1}{$w$}$};
\node at (1,-.25) {$\scalebox{1}{}$};

\end{tikzpicture}
\]
\caption{The local rules for computing the action of $F$ on numbers expressed in binary expansion.}\label{compute}
\end{figure}
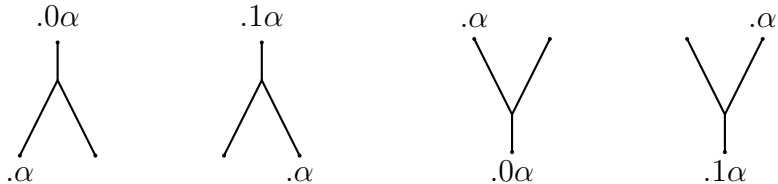

 \section{The planar $3$-colorable subgroup %$\CE$ 
 and its even part} \label{sec2}% $\CE_i$}
In this section    
 we introduce the planar $3$-colorable subgroup $\CE$ and its even part $\CEE$ and find  generating sets for them.

 As recalled above,
  the elements of $F$ 
 corresponding to tree diagrams
  for which the strip bounded by the lines $y=\pm 1$ is $3$-colorable form a subgroup called the $3$-colorable subgroup $\CF$.
  Observe that if we ignore the lines $y=\pm 1$, each 
  tree diagram
   partitions the plane into regions.
 The object of study of this article are the elements of $F$  for which this plane partition %determined by their tree diagrams
 is $3$-colorable. They form a subgroup called the \textbf{planar $3$-colorable subgroup} $\CE$.
 It is clear that  the plane partition
 being  $3$-colorable does not depend on equivalence class of the tree diagrams since 
 a $3$-coloring naturally extends when 
 a pair of opposing carets is added.
  When we talk about $\CE$ we remove the upward edge sprouting from the vertex of degree $1$  of the top tree and the downward edge sprouting from the root of the bottom tree, since this does not affect the fact that the partition is $3$-colorable.

   \begin{convention}\label{conventionCE}
    If the plane partition
  is $3$-colorable, then there are exactly six  colorings
   and if we assign the following colors to the regions near the root of the top tree
    \begin{eqnarray}\label{convention-colors}
\begin{tikzpicture}[x=.5cm, y=.5cm,
    every edge/.style={
        draw,
      postaction={decorate,
                    decoration={markings}
                   }
        }
] 

 \draw[thick] (0,0) -- (1,1)--(2,0);
% \draw[thick] (1,1) -- (1,2);

%  \fill (1,2)  circle[radius=1.5pt];

%\node at (-10,1) {$(\star)$};

\node at (1,2) {$0$};
%\node at (2,1) {$0$};
\node at (1,0.2) {$1$};

\end{tikzpicture}
\end{eqnarray}  
then 
there exists a unique colouring. 
  \end{convention}

\begin{example}\label{esex0}
$x_0\in \CE$: 
\[
\begin{tikzpicture}[x=.35cm, y=.35cm,
    every edge/.style={
        draw,
      postaction={decorate,
                    decoration={markings}
                   }
        }
]

 \node at (-1.25,-3) {\;};

\draw[thick] (0,0) -- (2,2)--(4,0)--(2,-2)--(0,0);
 \draw[thick] (1,1) -- (2,0)--(3,-1);
%
% \draw[thick] (2,2)--(2,2.5);
%
% \draw[thick] (2,-2)--(2,-2.5);

\node at (-1,0) {$0$};
\node at (1,0) {$2$};
\node at (3,0) {$1$};

\end{tikzpicture}
\]
\end{example}  
This example shows that, unlike in the case of the $3$-colorable subgroup  \cite{TV2}, the colorings of the regions near the root of the bottom tree are not necessarily the same as those 
near the root of the top tree.
In the following lemma we show that the colors of the regions  near root of the bottom tree depend on the derivative of the homeomorphism  at the endpoints of the unit interval.
\begin{lemma}\label{lemmarootcol}
Let $f\in\CE$. 
Then the following conditions are equivalent
\begin{enumerate}
\item  $\log_2 f'(0)\in 2\IZ$ (respectively $\log_2 f'(0)\in 2\IZ+1$); 
\item $\log_2 f'(1)\in 2\IZ$ (respectively $\log_2 f'(1)\in 2\IZ+1$);
\item  the colors near the root of the bottom tree are 
\[
\begin{tikzpicture}[x=.5cm, y=.5cm,
    every edge/.style={
        draw,
      postaction={decorate,
                    decoration={markings}
                   }
        }
] 

 \draw[thick] (0,2) -- (1,1)--(2,2);
% \draw[thick] (1,1) -- (1,0);
%
%  \fill (1,0)  circle[radius=1.5pt];

%\node at (10,1) {$\;$};

\node at (1,0) {$0$};
%\node at (2,1) {$0$};
\node at (1,1.8) {$1$};
\node at (1,-1.5) {$\,$};

\end{tikzpicture}
\qquad 
\qquad  
\begin{tikzpicture}[x=.5cm, y=.5cm,
    every edge/.style={
        draw,
      postaction={decorate,
                    decoration={markings}
                   }
        }
] 

 \draw[thick] (0,2) -- (1,1)--(2,2);
% \draw[thick] (1,1) -- (1,0);
%
%  \fill (1,0)  circle[radius=1.5pt];

%\node at (10,1) {$\;$};

\node at (1,0) {$0$};
%\node at (2,1) {$0$};
\node at (1,1.8) {$2$};

\draw[thick] (-5,-1.5) to[out=125,in=-125] (-5,2.5);  
\draw[thick] (2.5,-1.5) to[out=55,in=-55] (2.5,2.5);  

\node at (-3,1) {respectively};

\end{tikzpicture}
\]
\end{enumerate} 
  \end{lemma}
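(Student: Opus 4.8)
The plan is to establish the two equivalences (1)$\Leftrightarrow$(3) and (2)$\Leftrightarrow$(3) by one and the same geometric argument, applied respectively to the \emph{leftmost} and \emph{rightmost} paths of the tree diagram of $f$; the equivalence (1)$\Leftrightarrow$(2) then comes for free, since both are equivalent to (3). Fix a reduced tree diagram $(T_+,T_-)$ for $f$ and let $a_\pm$ (respectively $b_\pm$) be the number of carets along the leftmost (respectively rightmost) path of $T_\pm$. Reading the local rules of Figure \ref{compute} for a point near $0$ (respectively near $1$) gives $\log_2 f'(0)=a_- - a_+$ and $\log_2 f'(1)=b_- - b_+$; what I will actually use is only that the parity of $\log_2 f'(0)$ equals that of $a_+ + a_-$, and similarly at $1$.

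The heart of the matter is an alternation statement along a spine. Along the leftmost path of $T_+$, let $C_0,C_1,\ldots,C_{a_+-1}$ be the faces lying immediately to the right of the spine, $C_j$ being the cell enclosed under the $j$-th caret. Each $C_j$ has a left spine edge on its boundary, and the opposite side of that edge is the (connected) unbounded face, which by Convention \ref{conventionCE} is colored $0$; hence every $C_j$ is colored $1$ or $2$. Moreover $C_j$ and $C_{j+1}$ share the right leg of the $(j{+}1)$-st caret, so they are adjacent and receive different colors. Since $C_0$ is exactly the central region under the root of $T_+$, Convention \ref{conventionCE} forces $C_0=1$, and therefore $C_j=1$ for $j$ even and $C_j=2$ for $j$ odd. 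The same argument along the leftmost path of $T_-$ yields faces $D_0,\ldots,D_{a_--1}$, with $D_0$ the central region above the root of $T_-$, which again alternate between $1$ and $2$.

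Next I would glue the two spines at the leftmost leaf $L$ and count parities. The cell $C_{a_+-1}$ lying just above-and-right of $L$ and the cell $D_{a_--1}$ lying just below-and-right of $L$ are in fact a single face: the $x$-axis is not an edge of the partition, and the only edges meeting $L$ are the two spine edges (both leaving $L$ to the right), so a punctured right-neighborhood of $L$ joins the two cells across the axis. Consequently $C_{a_+-1}$ and $D_{a_--1}$ have the same color, and tracing the two alternations shows that $D_0$ is the color $1$ flipped $(a_+-1)+(a_--1)\equiv a_++a_-\pmod 2$ times. Thus $D_0=1$ (configuration (a)) precisely when $a_++a_-$ is even, i.e. when $\log_2 f'(0)$ is even, and $D_0=2$ (configuration (b)) precisely when it is odd; this is (1)$\Leftrightarrow$(3). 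Re-running the identical argument along the rightmost spines lands on the \emph{same} face $D_0$, the unique cell between the two legs of the root of $T_-$, and gives (2)$\Leftrightarrow$(3); in particular $a_++a_-\equiv b_++b_-\pmod 2$, recovering (1)$\Leftrightarrow$(2).

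The routine derivative bookkeeping aside, I expect the main obstacle to be the two purely planar claims: that consecutive spine cells are genuinely adjacent faces \emph{for arbitrary subtrees hanging off the spine} (so that the alternation is actually forced), and that the extreme top and bottom cells fuse into one face across the non-edge $x$-axis. I would isolate these as a short topological lemma about the faces of the plane partition encountered along a monotone path, proved by induction on the hanging subtrees, and only then carry out the color count. As a sanity check on the indexing, Example \ref{esex0} has $a_+=2$, $a_-=1$, so $a_++a_-=3$ is odd and $D_0=2$, matching $\log_2 x_0'(0)=-1$ and configuration (b).
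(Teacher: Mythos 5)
Your proof is correct and follows essentially the same route as the paper's: both rest on the observation that the cells flanking the extreme (leftmost/rightmost) spine all border the $0$-colored unbounded face and therefore alternate between $1$ and $2$, that the top and bottom spine cells fuse at the extreme leaf, and that the resulting parity count matches the parity of $\log_2 f'(0)$ (resp.\ $\log_2 f'(1)$). The only difference is organizational — the paper carries out the alternation argument pictorially along the rightmost path to get (2)$\Leftrightarrow$(3) and dispatches (1)$\Leftrightarrow$(3) by symmetry (the flip automorphism of Lemma \ref{operationsonE}), whereas you run the identical argument on both spines and make the planar adjacency claims explicit.
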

  \begin{proof}
  We only give a proof between the conditions involving $\log_2 f'(1)$ and the colors of the regions near the root of the bottom tree,
   the other equivalences can be handled in a similar way (otherwise, they also follow from Lemma \ref{operationsonE}-(1)).
  The figure below shows that the color to the left of the rightmost leaf of the top tree
  is $1$ when the length of the path from it to the root has odd length, otherwise it is $2$. 
  \[
  \begin{tikzpicture}[x=.75cm, y=.75cm,
    every edge/.style={
        draw,
      postaction={decorate,
                    decoration={markings}
                   }
        }
]
\draw[thick] (0,0) -- (1,1)--(2,0);
% \draw[thick] (1,1) -- (1,2);
 \draw[thick] (1,-1)--(2,0)--(3,-1);
  \draw[thick] (2,-2)--(3,-1)--(4,-2);

%  \fill (1,2)  circle[radius=1.5pt];
%  \fill (3,-1)  circle[radius=1.5pt];

\node at (1,2) {$0$};
%\node at (2,1) {$0$};
\node at (1,0.2) {$1$};
\node at (2,-.75) {$2$};
\node at (3,-1.75) {$1$};

%\node at (1,-5) {$\;$};

\end{tikzpicture}
  \]
  Once we reach the rightmost leaf, we continue moving along the shortest path towards the root of the bottom tree. 
  There are two cases to consider: the color to the left of the rightmost leaf is $1$ or $2$.
  The former case occurs when the number of edges from the root to the leaf is odd, 
  the latter when this number is even.
 The bottom tree will look like these, respectively
 \[
   \begin{tikzpicture}[x=.75cm, y=.75cm,
    every edge/.style={
        draw,
      postaction={decorate,
                    decoration={markings}
                   }
        }
]

% \draw[thick] (1,1) -- (1,2);
\draw[thick] (0,1) -- (1,0)--(2,1);
\draw[thick] (-1,0) -- (0,-1)--(1,0); 
\draw[thick] (-2,-1) -- (-1,-2)--(0,-1); 
\draw[thick] (-3,-2) -- (-2,-3)--(-1,-2); 
\draw[thick] (-2.5,-3.5) -- (-2,-3);

%  \fill (1,2)  circle[radius=1.5pt];
%  \fill (3,-1)  circle[radius=1.5pt];

\node at (1,1) {$1$};
\node at (3,1) {$0$};

\node at (0,0) {$2$};
\node at (-1,-1) {$1$};
\node at (-2,-2) {$2$};
 
%\node at (1,-5) {$\;$};

\end{tikzpicture}
\qquad 
\qquad
   \begin{tikzpicture}[x=.75cm, y=.75cm,
    every edge/.style={
        draw,
      postaction={decorate,
                    decoration={markings}
                   }
        }
]

% \draw[thick] (1,1) -- (1,2);
\draw[thick] (0,1) -- (1,0)--(2,1);
\draw[thick] (-1,0) -- (0,-1)--(1,0); 
\draw[thick] (-2,-1) -- (-1,-2)--(0,-1); 
\draw[thick] (-3,-2) -- (-2,-3)--(-1,-2); 
\draw[thick] (-2.5,-3.5) -- (-2,-3);

%  \fill (1,2)  circle[radius=1.5pt];
%  \fill (3,-1)  circle[radius=1.5pt];

\node at (1,1) {$2$};
\node at (3,1) {$0$};

\node at (0,0) {$1$};
\node at (-1,-1) {$2$};
\node at (-2,-2) {$1$};
 
%\node at (1,-5) {$\;$};

\end{tikzpicture}
 \]
 If   $\log_2 f'(1)\in 2\IZ$, it means that the lengths of the paths from the two roots to the rightmost leaf have the same parity and thus the colors of the regions near the root of the bottom tree are the same as those near the root of the top tree.
 Similarly, when  $\log_2 f'(1)\in 2\IZ+1$,  the lengths of the paths from the two roots to the rightmost leaf have the different parity and thus the color of the region above the root of the bottom tree is $2$. 
   \end{proof}
  Here is an  easy application of the previous lemma.
  \begin{proposition}\label{propintersectionK12}
Let $f\in F$.  If $\log_2f'(0)\not\equiv_2\log_2f'(1)$, then $f\not\in\CE$.
In particular,  it holds   $\CE\cap (K_{(2,1)}\setminus K_{(2,2)})=\emptyset$.  
  \end{proposition}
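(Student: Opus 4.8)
The plan is to obtain both assertions as essentially formal consequences of Lemma \ref{lemmarootcol}, where the real work has already been done. That lemma asserts that for every $f\in\CE$ the condition $\log_2 f'(0)\in 2\IZ$ is equivalent to $\log_2 f'(1)\in 2\IZ$ (and likewise with the odd residue class $2\IZ+1$). Read correctly, this says precisely that membership in $\CE$ \emph{forces} $\log_2 f'(0)$ and $\log_2 f'(1)$ to have the same parity, i.e.\ $\log_2 f'(0)\equiv_2\log_2 f'(1)$. The first statement of the proposition is just the contrapositive of this: if the two parities disagree, then $f$ cannot belong to $\CE$.

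For the \emph{in particular} clause I would simply unwind the definitions of the rectangular subgroups. An element $f\in K_{(2,1)}\setminus K_{(2,2)}$ satisfies $\log_2 f'(0)\in 2\IZ$ (from membership in $K_{(2,1)}$), while failing the second defining condition of $K_{(2,2)}$ forces $\log_2 f'(1)\in\IZ\setminus 2\IZ=2\IZ+1$. Thus $\log_2 f'(0)$ is even and $\log_2 f'(1)$ is odd, so the hypothesis $\log_2 f'(0)\not\equiv_2\log_2 f'(1)$ of the first part is met, and we conclude $f\notin\CE$. As this holds for every such $f$, we obtain $\CE\cap(K_{(2,1)}\setminus K_{(2,2)})=\emptyset$.

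I do not expect any genuine obstacle here, since the substantive combinatorial content---the parity tracking of colours along the path to the rightmost leaf---is entirely contained in the proof of Lemma \ref{lemmarootcol}. The only point that warrants a moment of care is to read the equivalence (1)$\Leftrightarrow$(2) of that lemma as a statement about \emph{matching} parities (rather than about one fixed parity), so that its contrapositive yields exactly the mismatch condition $\log_2 f'(0)\not\equiv_2\log_2 f'(1)$ appearing in the proposition. With that reading in place, the argument is immediate.
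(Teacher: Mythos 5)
Your proposal is correct and matches the paper's intent exactly: the paper states this proposition as ``an easy application of the previous lemma'' (Lemma \ref{lemmarootcol}) without further detail, and your argument---reading the lemma as forcing $\log_2 f'(0)\equiv_2\log_2 f'(1)$ for $f\in\CE$, taking the contrapositive, and then unwinding the definitions of $K_{(2,1)}$ and $K_{(2,2)}$ for the second clause---is precisely the intended reasoning.
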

  
Among other things the next       lemma shows a relation between the elements of the $3$-colorable subgroup $\CF$ 
and those of the planar $3$-colorable subgroup $\CE$.
It will come in handy for determining the generators of $\CE$.
\begin{lemma}\label{operationsonE}
Let $\sigma: F\to F$ be the flip automorphism, that is the order $2$ automorphism  obtained by reflecting tree diagrams about the vertical line,  and let 
$\varphi_R, \varphi_L : F\to  F$ be the right and left shift endomorphisms, which are   defined graphically  as
\[
\begin{tikzpicture}[x=1cm, y=1cm,
    every edge/.style={
        draw,
      postaction={decorate,
                    decoration={markings}
                   }
        }
]

\node at (-.45,0) {$\scalebox{1}{$\varphi_R$:}$};

\draw[thick] (0,0)--(.5,.5)--(1,0)--(.5,-.5)--(0,0);
\node at (1.5,0) {$\scalebox{1}{$\mapsto$}$};

\node at (.5,0) {$\scalebox{1}{$g$}$};
\node at (.5,.-.75) {$\scalebox{1}{}$};

 \draw[thick] (.5,.65)--(.5,.5);
 \draw[thick] (.5,-.65)--(.5,-.5);
 
\end{tikzpicture}
\begin{tikzpicture}[x=1cm, y=1cm,
    every edge/.style={
        draw,
      postaction={decorate,
                    decoration={markings}
                   }
        }
]
 
\node at (2.5,0) {$\scalebox{1}{$g$}$};

\draw[thick] (1.5,0)--(2.25,.75)--(3,0)--(2.25,-.75)--(1.5,0);
\draw[thick] (2,0)--(2.5,.5)--(3,0)--(2.5,-.5)--(2,0);
 
 \node at (1.5,.-.75) {$\scalebox{1}{}$};

 \draw[thick] (2.25,.75)--(2.25,.9);
 \draw[thick] (2.25,-.75)--(2.25,-.9);

\end{tikzpicture}
\qquad 
\begin{tikzpicture}[x=1cm, y=1cm,
    every edge/.style={
        draw,
      postaction={decorate,
                    decoration={markings}
                   }
        }
]

\node at (-.45,0) {$\scalebox{1}{$\varphi_L$:}$};

\draw[thick] (0,0)--(.5,.5)--(1,0)--(.5,-.5)--(0,0);
\node at (1.5,0) {$\scalebox{1}{$\mapsto$}$};

\node at (.5,0) {$\scalebox{1}{$g$}$};
\node at (.5,.-.75) {$\scalebox{1}{}$};

 \draw[thick] (.5,.65)--(.5,.5);
 \draw[thick] (.5,-.65)--(.5,-.5);
 
\end{tikzpicture}
\begin{tikzpicture}[x=1cm, y=1cm,
    every edge/.style={
        draw,
      postaction={decorate,
                    decoration={markings}
                   }
        }
]
 
\node at (2.5,0) {$\scalebox{1}{$g$}$};

\draw[thick] (3.5,0)--(2.75,.75)--(2.5,0.5);
\draw[thick] (2.5,-0.5)--(2.75,-.75)--(3.5,0);
\draw[thick] (2,0)--(2.5,.5)--(3,0)--(2.5,-.5)--(2,0);
 
 \node at (1.95,.-.75) {$\scalebox{1}{}$};

%\draw[thick] (2.25,.75)--(1.75,0)--(2.25,-.75);

 \draw[thick] (2.75,.75)--(2.75,.9);
 \draw[thick] (2.75,-.75)--(2.75,-.9);

\end{tikzpicture}
\]
When $F$ is seen as a group of homeomorphisms of the unit interval, for $g\in F$, these 
maps read as follow 
\begin{align*}
&\varphi_L(g)(t):=\left\{\begin{array}{ll}
t & \text{ if } t\in [0,1/2]\\
g(2t-1) & \text{ if } t\in [1/2 ,1]
\end{array}
\right.\\
&\varphi_R(g)(t):=\left\{\begin{array}{ll}
g(2t) & \text{ if } t\in [0,1/2 ]\\
t & \text{ if } t\in [1/2,1]\\
\end{array}
\right.
\end{align*}
Then 
\begin{enumerate}
\item  $g\in\CE$ if and only if $\sigma(g)\in\CE$;
\item  $g\in\CF$ if and only if $\sigma(g)\in\CF$;
\item $g\in\CF$ if and only if $\varphi_R(g)\in\CE$;
\item $g\in\CF$ if and only if $\varphi_L(g)\in\CE$.
 \end{enumerate}
\end{lemma}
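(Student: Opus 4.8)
The plan is to reduce each of the four statements to a comparison of the graphs whose proper $3$-colourings we are counting: the \emph{region-adjacency graph} of a tree diagram, taken in the strip $-1\le y\le 1$ for membership in $\CF$, and in the whole plane (with the two root stubs deleted) for membership in $\CE$. Since being in $\CF$ or $\CE$ only asks for the \emph{existence} of a proper $3$-colouring, it suffices in each case to exhibit an isomorphism of these simple graphs; colour conventions and edge multiplicities are then irrelevant.

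For (1) and (2) I would observe that $\sigma$ is induced by a reflection of the plane about a vertical axis. This is a homeomorphism carrying the strip to the strip and the boundary lines $y=\pm1$ to themselves, and carrying a tree diagram to its flip. Hence it sends the region partition of $g$ (in the strip, respectively in the plane) to that of $\sigma(g)$ region by region, preserving every adjacency, so a proper $3$-colouring of one pulls back to a proper $3$-colouring of the other. This yields (1) and (2) simultaneously. Statement (4) I would not prove directly: graphically $\varphi_L=\sigma\circ\varphi_R\circ\sigma$ (flip, attach the new caret with $g$ as the right subtree, flip back), so (4) follows by combining (3) with (1) and (2).

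The substance is therefore (3), where I would compare the \emph{plane} partition of $\varphi_R(g)$ with the \emph{strip} partition of $g$. Writing $(T_+,T_-)$ for the trees of $g$ with roots $s_\pm$, the diagram $\varphi_R(g)$ has new roots $r_\pm$, a new leftmost leaf $\ell_0$, new edges $e_\pm=r_\pm\ell_0$ and $f_\pm=r_\pm s_\pm$, with $(T_+,T_-)$ left untouched as the right subtrees. The regions strictly inside $(T_+,T_-)$, and all their mutual adjacencies, coincide in the two pictures. In the strip picture of $g$ the only regions meeting the boundary are the far-left region $L$ and the far-right region $R$, and the two root stubs render $L$ and $R$ adjacent. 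In the plane picture of $\varphi_R(g)$ the new caret creates exactly two further regions: the left pocket $P$ bounded by $e_\pm,f_\pm$ and the leftmost paths of $T_\pm$, and the unbounded region $O$ bounded by $e_\pm,f_\pm$ and the rightmost paths of $T_\pm$. The key point is that $P$ is adjacent precisely to $O$ (across $e_\pm$ and $f_\pm$) and to the same interior regions that $L$ met along the left silhouette, while $O$ is adjacent precisely to $P$ and to the interior regions that $R$ met along the right silhouette; thus the edges $f_\pm$ play exactly the role of the two deleted stubs. Consequently the region-adjacency graph of $\varphi_R(g)$ in the plane is isomorphic, as a simple graph, to that of $g$ in the strip, via $L\mapsto P$, $R\mapsto O$ and the identity on interior regions, whence $g\in\CF$ if and only if $\varphi_R(g)\in\CE$.

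The main obstacle will be the region bookkeeping in (3): one must verify that deleting the boundary lines and stubs of $g$ while adjoining the left caret neither splits nor wrongly merges any interior region, and in particular that $P$ and $O$ are each a \emph{single} region whose only mutual adjacency corresponds to the $L$--$R$ adjacency through the stubs. I would check this by tracing the outer boundary walk of $\varphi_R(g)$ together with the boundaries of $P$ and $O$ explicitly, in the same spirit as the path-tracing argument of Lemma \ref{lemmarootcol}. (If the reading of the figures has $\varphi_R$ attaching the new leaf on the right instead, the argument is identical after swapping the roles of $P$ and $O$.)
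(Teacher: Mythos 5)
Your proposal is correct and takes essentially the same approach as the paper: the paper proves (1) by the same reflection argument, and for (3)--(4) it merely asserts that "the claim is clear by coloring the partitions," which is precisely the region-by-region adjacency matching (left pocket $\leftrightarrow$ far-left strip region, unbounded region $\leftrightarrow$ far-right strip region, stubs playing the role of the edges $f_\pm$) that you carry out in detail. The only cosmetic differences are that the paper handles (2) by citing an earlier lemma of \cite{TV2} where you reuse the reflection argument, and handles (4) by direct coloring where you reduce it to (1)--(3) via the identity $\varphi_L=\sigma\circ\varphi_R\circ\sigma$; both variants are sound.
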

\begin{proof}
For (1), the plane partition associated with $g$ is $3$-colorable if and only if the one associated with $\sigma(g)$ is $3$-colorable since the second plane partition is obtained from the first after a reflection about the vertical line. \\
(2) was proven in \cite[Lemma 1.2]{TV2}.\\
For (3) and (4), %As for the left and right shift endomorphism,
 the claim is clear by coloring the partitions of the plane corresponding to $g$ and $\varphi_R(g)$,  $\varphi_L(g)$.
 \end{proof}
The  right shift endomorphism  
$\varphi_R$ maps $x_i$ to $x_{i+1}$ for all $i\geq 0$. In particular, 
in terms of homeomorphisms of $[0,1]$, the range of $\varphi_R$ is the subgroup  of $F$ whose elements act trivially on $[0, 1/2]$.
 Similarly, the range of the left shift endomorphism  is the subgroup of $F$ consisting of the elements that act trivially on $[1/2, 1]$.
The right and left shift endomorphisms are clearly injective.

\begin{example}\label{esedue}
By Lemma \ref{operationsonE}, examples of elements of $\CE$ may be obtained as $\varphi_L(\CF)$ and $\varphi_R(\CF)$.
In particular, $\varphi_L(w_i)$, $\varphi_R(w_i)\in \CE$ for $i=0$, $1$, $2$, $3$, where $\{w_i\}_{i=0}^3$ are the generators of $\CF$ (see Figure \ref{genCF} for the pairs of trees representing these elements). %, \cite{Ren} and \cite{TV2}.
\end{example}
We are now in a position to determine the generators of $\CE$.
\begin{theorem}\label{genE}
It holds
$\CE= \langle x_0, \varphi_R(\CF)\rangle = \langle x_0, \varphi_L(\CF)\rangle$. 
\end{theorem}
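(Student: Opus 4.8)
The plan is to prove both asserted equalities by establishing the two inclusions in each, with essentially all of the work concentrated in showing that $\CE$ is contained in the relevant generated subgroup. For the inclusion $\langle x_0, \varphi_R(\CF)\rangle \subseteq \CE$ there is nothing to do beyond invoking what is already available: $x_0 \in \CE$ by Example~\ref{esex0}, while $\varphi_R(\CF) \subseteq \CE$ by Lemma~\ref{operationsonE}(3); since $\CE$ is a subgroup, the generated subgroup lies inside it, and the same argument with Lemma~\ref{operationsonE}(4) gives $\langle x_0, \varphi_L(\CF)\rangle \subseteq \CE$. I would also record at the outset the sharper statement that makes the reverse inclusion tractable: since the range of $\varphi_R$ is exactly the set of elements of $F$ acting trivially on $[0,1/2]$, Lemma~\ref{operationsonE}(3) identifies $\varphi_R(\CF)$ with precisely the set of elements of $\CE$ that fix $[0,1/2]$ pointwise.

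For the hard inclusion $\CE \subseteq \langle x_0, \varphi_R(\CF)\rangle =: H$, I would fix $f \in \CE$ and first normalise its germ at $0$. Setting $k := \log_2 f'(0)$ and using the action convention $f\cdot g(t)=g(f(t))$, the product $f_1 := x_0^{k} f$ satisfies $\log_2 f_1'(0) = 0$; since $x_0, f \in \CE$ we have $f_1 \in \CE$, and since $x_0 \in H$ we have $f \in H \iff f_1 \in H$. As every element of $F$ is linear of slope $f_1'(0)=1$ near $0$ and fixes $0$, the element $f_1$ is the identity on a dyadic neighbourhood $[0, 2^{-j}]$ for some $j \geq 1$.

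The second step is to enlarge this fixed neighbourhood out to $[0,1/2]$ by conjugating with powers of $x_0$. A direct computation with the homeomorphism $x_0$ (which near $0$ contracts by the factor $2$) shows that $x_0 f_1 x_0^{-1}$ fixes $[0, 2^{-(j-1)}]$ pointwise whenever $f_1$ fixes $[0,2^{-j}]$ with $j \geq 2$; iterating $j-1$ times produces $f_2 := x_0^{\,j-1} f_1 x_0^{-(j-1)} \in \CE$ that fixes $[0,1/2]$ pointwise, with $f \in H \iff f_2 \in H$. By the description recorded in the first paragraph, $f_2 \in \varphi_R(\CF) \subseteq H$, whence $f \in H$; this proves $\CE = \langle x_0, \varphi_R(\CF)\rangle$. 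I expect the only delicate point to be the bookkeeping in this conjugation step: keeping the action convention straight and checking that each conjugation strictly enlarges the fixed interval while preserving membership in both $\CE$ and $H$. This is routine once the contraction factor of $x_0$ at $0$ is pinned down, and it is the crux of the argument, the rest being formal.

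Finally, the identity $\CE = \langle x_0, \varphi_L(\CF)\rangle$ follows by the mirror-image argument carried out at the endpoint $1$ instead of $0$: one normalises $f'(1)$ by a power of $x_0$ (note $\log_2 x_0'(1) = 1$), conjugates by powers of $x_0$ to grow the fixed neighbourhood of $1$ out to $[1/2,1]$, and concludes with Lemma~\ref{operationsonE}(4) in place of (3). Equivalently, one may apply the flip automorphism $\sigma$ to the already-proved first equality, using $\sigma(\CE)=\CE$ and $\sigma(\CF)=\CF$ from Lemma~\ref{operationsonE}(1)--(2) together with the intertwining $\sigma\circ\varphi_R=\varphi_L\circ\sigma$, provided one checks that $\sigma(x_0)\in\langle x_0,\varphi_L(\CF)\rangle$; the direct argument at $1$ is cleaner and sidesteps computing $\sigma(x_0)$ explicitly.
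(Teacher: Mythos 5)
Your proposal is correct, but it takes a genuinely different route from the paper. The paper's proof is purely combinatorial: it writes $g\in\CE$ in its normal form $x_0^{a_0}\cdots x_n^{a_n}x_n^{-b_n}\cdots x_0^{-b_0}$, observes that $g':=x_0^{-a_0}gx_0^{b_0}$ has a normal form with no occurrences of $x_0$ and hence lies in the image of $\varphi_R$, and then applies Lemma~\ref{operationsonE}(3); the second equality is obtained by applying the flip $\sigma$ (with $\sigma(x_0)=x_0^{-1}$, so your worry about that step is unfounded --- it is immediate). You instead work in the homeomorphism picture: normalise the germ of $f$ at $0$ by a power of $x_0$, observe the result is the identity on some $[0,2^{-j}]$, and conjugate by powers of $x_0$ to enlarge the fixed interval to $[0,1/2]$, landing in $\mathrm{range}(\varphi_R)\cap\CE=\varphi_R(\CF)$, which is the same key identity the paper uses. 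Both arguments ultimately produce integers $a,b$ with $x_0^{a}fx_0^{b}\in\varphi_R(\CF)$; the paper reads them off the normal form, you read them off the dynamics. Your version buys independence from normal form theory at the price of convention bookkeeping, and on that point there is a small (fixable, and duly flagged by you) slip: the paper's convention, visible from $\pi(x_0^{-1})=(-1,1)$ in Section~\ref{sec2}, gives $\log_2 x_0'(0)=+1$, i.e.\ $x_0$ \emph{expands} near $0$, so your normalisation should be $f_1:=x_0^{-k}f$ and the fixed-interval-enlarging conjugation should use $x_0^{-1}$ rather than $x_0$; with those signs adjusted the argument is sound, since $x_0^{\pm1}\in\langle x_0,\varphi_R(\CF)\rangle$ in any case.
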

\begin{proof}
First we prove the first equality.
We know by the Examples \ref{esex0} and \ref{esedue} that  $\langle x_0, \varphi_R(\CF)\rangle$ is contained in $\CE$, so we only have to prove the
converse inclusion. Let $g\in \CE$. 
Write $g$ in normal form (see \cite{CFP} for more information about this normal form), that is as 
$g=x_0^{a_0}\cdots x_n^{a_n}x_n^{-b_n}\cdots x_0^{b_0}$ for some $a_0, \ldots , a_n, b_0, \ldots , b_n\in\IN_0$. 
Now $g$ belongs to $\CE$ if and only if $g':=x_0^{-a_0}gx_0^{b_n}$ belongs to $\CE$ since $x_0$ is in $\CE$. 
The element $g'$ is by construction in the image of $\varphi_R$, that is it is of the form $\varphi_R(g'')$ (here $g''$ is nothing but $x_0^{a_1}\cdots x_{n-1}^{a_n}x_{n-1}^{-b_n}\cdots x_0^{b_1}$).
By Lemma \ref{operationsonE}-(3) we have that $g''\in \CF$.

The second equality  follows from the first by applying $\sigma$ to $\langle x_0, \varphi_R(\CF)\rangle$  and using Lemma \ref{operationsonE} (note that $\sigma(x_0)=x_0^{-1}$). 
\end{proof}

By Lemma \ref{lemmarootcol} the logarithm of the derivative at $0$ of the elements of $\CE$ can be either even or odd.
It is natural to consider the elements of the first type because they form a subgroup of $\CE$.
\begin{definition}\label{defCEE}
The \textbf{even part $\CEE$ of the planar $3$-colorable subgroup} is the subgroup defined as
\begin{align}
%\CEE:&=\CE\cap K_{(1,2)}=\{(T_+,T_-)\in F\; |\; \omega_+(i)=\omega_-(i) \; \forall i\geq 0\}
\CEE&:=\CE\cap K_{(2,2)} \, .
\end{align}
\end{definition}
\begin{example}\label{esex12}
The subgroup $\CEE$ is a proper subgroup of $K_{(2,2)}$. Indeed,
the   element $x_1^2\in K_{(2,2)}$ is not in $\CEE$.
\[
\begin{tikzpicture}[x=.35cm, y=.35cm,
    every edge/.style={
        draw,
      postaction={decorate,
                    decoration={markings}
                   }
        }
]

\node at (-3.5,0) {$\scalebox{1}{$x_1=$}$};
\node at (-1.25,-3.25) {\;};

\draw[thick] (2,2)--(1,3)--(-2,0)--(1,-3)--(2,-2);

\draw[thick] (0,0) -- (2,2)--(4,0)--(2,-2)--(0,0);
 \draw[thick] (.5,.5) --(2.5,-1.5);
 \draw[thick] (1.25,1.25) --(3.25,-.75);

 \draw[thick] (1,3)--(1,3.5);
 \draw[thick] (1,-3)--(1,-3.5);

\end{tikzpicture}
\]
This example shows that in general $\CEE$ and $\CE$ are not preserved by the right shift endomorphism.
\end{example}
Note that thanks to Lemma \ref{operationsonE}, both  subgroups $\CE$ and $\CEE$ are invariant under the action of the flip automorphism $\sigma$.
 \begin{theorem}\label{genE1}
It holds 
 $\CEE= \langle x_0^{2}, \varphi_R(\CF)\rangle= \langle x_0^{2}, \varphi_L(\CF)\rangle$.
  \end{theorem}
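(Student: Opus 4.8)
The plan is to dispose of the second equality and the easy inclusion first, and then to reduce the hard inclusion to a single structural fact about $\varphi_R$ and $\CF$. Write $H:=\langle x_0^{2},\varphi_R(\CF)\rangle$. The second equality will follow from the first by applying the flip automorphism $\sigma$: since $\sigma(x_0)=x_0^{-1}$, $\sigma\circ\varphi_R=\varphi_L\circ\sigma$, $\sigma(\CF)=\CF$ (Lemma \ref{operationsonE}(2)), and $\CEE$ is $\sigma$-invariant, applying $\sigma$ to $\CEE=\langle x_0^{2},\varphi_R(\CF)\rangle$ yields $\CEE=\langle x_0^{2},\varphi_L(\CF)\rangle$. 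For the inclusion $H\subseteq\CEE$ I would note that $x_0^{2}\in\CE$ with $\log_2(x_0^{2})'(0)=-2\in 2\IZ$, and that every element of $\varphi_R(\CF)$ lies in $\CE$ (Lemma \ref{operationsonE}(3)) while fixing a neighbourhood of $0$, hence has $\log_2 f'(0)=0$; by Lemma \ref{lemmarootcol} both generating sets therefore sit inside $\CEE=\CE\cap K_{(2,2)}$.

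For the reverse inclusion I would run the normal-form argument of Theorem \ref{genE}. Given $g\in\CEE$, that argument produces a factorisation $g=x_0^{a_0}\varphi_R(g'')x_0^{-b_0}$ with $g''\in\CF$ and $a_0,b_0\in\IN_0$. Because the middle factor fixes a neighbourhood of $0$, one has $\log_2 g'(0)=b_0-a_0$, so the condition $g\in K_{(2,2)}$ is exactly $a_0\equiv b_0\pmod 2$. The key algebraic move is the shift relation $x_0^{-1}\varphi_R(h)x_0=\varphi_R^{2}(h)$ (immediate from $x_0^{-1}x_m x_0=x_{m+1}$ for $m\ge 1$), which gives $x_0^{-b_0}\varphi_R(g'')x_0^{b_0}=\varphi_R^{1+b_0}(g'')$ and hence
\[
g=x_0^{a_0+b_0}\,\varphi_R^{1+b_0}(g'')\,x_0^{-2b_0}.
\]
Here $a_0+b_0$ and $2b_0$ are even, so the two outer factors lie in $\langle x_0^{2}\rangle\subseteq H$, and the proof is finished once I know that the inner factor $\varphi_R^{1+b_0}(g'')$ lies in $\varphi_R(\CF)\subseteq H$.

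The inner factor lands in $\varphi_R(\CF)$ as soon as $\varphi_R^{b_0}(g'')\in\CF$, so the whole theorem rests on the stability statement $\varphi_R(\CF)\subseteq\CF$ (whence $\varphi_R^{j}(\CF)\subseteq\CF$ for every $j\ge 0$ by iteration). This is the step I expect to be the main obstacle, and it is genuinely geometric rather than algebraic: one must show that the unique strip $3$-colouring of a diagram for $h\in\CF$ extends, after grafting the extra leftmost caret that $\varphi_R$ adds to both the top and the bottom tree, to a strip $3$-colouring of $\varphi_R(h)$. I would prove it by a direct colouring analysis in the style of Lemma \ref{operationsonE} and Proposition \ref{propintersectionpos}, tracking how the colours of the regions adjacent to the two new roots are forced and checking that no conflict arises at the new leftmost regions; a convenient bookkeeping device is that it suffices to verify $\varphi_R(w_i)\in\CF$ for the four Ren generators $w_0,\dots,w_3$, since $\varphi_R$ is a homomorphism and $\CF=\langle w_0,\dots,w_3\rangle$, and this already yields $\varphi_R(\CF)\subseteq\CF$. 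Granting this lemma, the displayed factorisation shows $g\in H$, completing the proof of $\CEE\subseteq H$ and hence of the first equality.
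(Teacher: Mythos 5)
Your proposal is correct, and its skeleton (normal form, the shift relation $x_0^{-1}x_mx_0=x_{m+1}$ for $m\geq 1$, parity bookkeeping coming from $g\in K_{(2,2)}$, the flip $\sigma$ for the second equality, and the easy inclusion via Lemma~\ref{lemmarootcol}) matches the paper's proof; but you close the hard inclusion differently, and your route is longer than it needs to be. The paper, after using even powers of $x_0$ to reduce to an element lying in the image of $\varphi_R$, never invokes any stability of $\CF$ under $\varphi_R$: it simply observes that this residual element is a product of elements of $\CE$ (namely $g$ and powers of $x_0$), hence lies in $\CE$, and then applies the \emph{equivalence} in Lemma~\ref{operationsonE}-(3) to conclude that its $\varphi_R$-preimage is in $\CF$. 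The same shortcut is available inside your own factorisation: the inner factor $\varphi_R^{1+b_0}(g'')=x_0^{-(a_0+b_0)}\,g\,x_0^{2b_0}$ belongs to $\CE$ for free, and it is visibly in the image of $\varphi_R$, so Lemma~\ref{operationsonE}-(3) gives $\varphi_R^{b_0}(g'')\in\CF$ at once --- the statement you flag as ``the main obstacle'' dissolves. That said, your key lemma $\varphi_R(\CF)\subseteq\CF$ is in fact true (grafting the extra left caret creates exactly one new strip region, adjacent only to the old leftmost and rightmost regions, which are already adjacent across the old root edges and hence carry distinct colours, so the third colour is available; equivalently one checks the four Ren generators as you suggest), so your argument is not wrong, merely burdened: it trades the paper's one-line appeal to Lemma~\ref{operationsonE}-(3) for an extra geometric fact of some independent interest ($g\in\CF$ if and only if $\varphi_R(g)\in\CF$) which you only sketch rather than prove. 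If you keep your version, you must actually carry out that colouring verification; if you adopt the shortcut above, your write-up becomes essentially the paper's proof.
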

\begin{proof} 
%First we prove (1).
We know by the Examples \ref{esex0} and \ref{esedue} that  $\langle x_0^2, \varphi_R(\CF)\rangle$ is contained in $\CEE$, so we only have to prove the
converse inclusion. Let $g\in \CEE$. 
Write $g$ in normal form, that is as 
$g=x_0^{a_0}\cdots x_n^{a_n}x_n^{-b_n}\cdots x_0^{b_0}$ for some $a_0, \ldots$, $a_n$, $b_0, \ldots$, $b_n\in\IN_0$. 
Without loss of generality we may assume that $a_0\geq b_0$ and $g\neq x_0^k$ for $k\in\IZ$.
%$n\geq 1$. 
Note that since $g\in K_{(2,2)}$,   $a_0$ and $b_0$ have the same parity.
Recall that $x_n^{-1}x_0=x_0x_{n+1}^{-1}$ for any $n\geq 1$.
If $a_0$ is even, then $g$ belongs to $\CEE$ if and only if $g':=x_0^{b_0-a_0}(x_0^{-a_0}gx_0^{a_0})=x_{1+(a_0-b_0)}^{a_1}\cdots x_{n+(a_0-b_0)}^{a_n}x_{n+(a_0-b_0)}^{-b_n}\cdots x_{1+(a_0-b_0)}^{b_1}$ does.
If $a_0$ is odd, then $g$ belongs to $\CEE$ if and only if 
\begin{align*}
g'&:=x_0^{b_0-a_0}(x_0^{-a_0-1}gx_0^{a_0+1})\\
&=x_{1+(a_0-b_0+1)}^{a_1}\cdots x_{n+(a_0-b_0+1)}^{a_n}x_{n+(a_0-b_0+1)}^{-b_n}\cdots x_{1+(a_0-b_0+1)}^{b_1}
\end{align*}
does. 
The element $g'$ is by construction in the image of $\varphi_R$, that is it is of the form $\varphi_R(g'')$: in the first case $g''$ is 
$x_{(a_0-b_0)}^{a_1}\cdots x_{n-1+(a_0-b_0)}^{a_n}x_{a_0-b_0+n-1}^{-b_n}\cdots x_{a_0-b_0}^{b_1}$ and in the second 
$x_{(a_0-b_0)+1}^{a_1}\cdots x_{n+(a_0-b_0)}^{a_n}x_{a_0-b_0+n}^{-b_n}\cdots x_{a_0-b_0+1}^{b_1}$ . % (here $g''$ is nothing but $x_0^{a_1}\cdots x_n^{a_n}x_n^{-b_n}\cdots x_0^{b_1}$).
By Lemma \ref{operationsonE}-(3) we have that $g''\in \CF$.

The second equality follows by applying $\sigma$ to $\langle x_0^2, \varphi_R(\CF)\rangle$  and using Lemma \ref{operationsonE} (note that $\sigma(x_0)=x_0^{-1}$). 
\end{proof} 
 We recall the following lemma from \cite[Lemma 4.2]{TV2}.
 \begin{lemma}\label{Lemmamax}
Let $G$ be a subgroup of $F$ such that $\pi(G)=a\IZ\oplus b\IZ$ and $G\neq K_{(a,b)}$. Then, the index of $G$ in $F$ is infinite. 
\end{lemma}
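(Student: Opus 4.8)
The plan is to exploit two facts: that $K_{(a,b)}$ is precisely the full preimage $\pi^{-1}(a\IZ\oplus b\IZ)$ (so that $\ker\pi=[F,F]$ sits inside it), and the classical theorem of Cannon--Floyd--Parry that the derived subgroup $[F,F]$ is an infinite simple group. First I would record that the hypothesis $\pi(G)=a\IZ\oplus b\IZ$ already forces $G\subseteq K_{(a,b)}$, since every $g\in G$ has $\pi(g)\in a\IZ\oplus b\IZ$ and $K_{(a,b)}=\pi^{-1}(a\IZ\oplus b\IZ)$. I would then check the sharper statement $G\cdot[F,F]=K_{(a,b)}$: given $k\in K_{(a,b)}$, the equality $\pi(G)=a\IZ\oplus b\IZ$ provides $g\in G$ with $\pi(g)=\pi(k)$, whence $g^{-1}k\in\ker\pi=[F,F]$ and $k\in G[F,F]$; the reverse inclusion is clear.

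Next I would pass to indices. Since $[F,F]$ is normal in $F$, hence in $K_{(a,b)}$, the second isomorphism theorem gives
\[
[K_{(a,b)}:G]=[G[F,F]:G]=[[F,F]:G\cap[F,F]].
\]
The assumption $G\neq K_{(a,b)}=G[F,F]$ means $[F,F]\not\subseteq G$, so $G\cap[F,F]$ is a \emph{proper} subgroup of $[F,F]$. I would then invoke simplicity through the standard orbit argument: an infinite simple group admits no proper subgroup of finite index, because a proper subgroup of index $n$ would yield a nontrivial action on $n$ cosets and thus a homomorphism into $S_n$ whose kernel, by simplicity, is trivial, embedding the infinite group into the finite group $S_n$ --- a contradiction. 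Applying this to the proper subgroup $G\cap[F,F]$ of the infinite simple group $[F,F]$ shows $[[F,F]:G\cap[F,F]]=\infty$, hence $[K_{(a,b)}:G]=\infty$.

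Finally I would conclude by noting $[F:K_{(a,b)}]=[\IZ\oplus\IZ:a\IZ\oplus b\IZ]=ab<\infty$, so that $[F:G]=[F:K_{(a,b)}]\,[K_{(a,b)}:G]=\infty$. The one genuinely essential ingredient --- and the main obstacle if one did not already have it available --- is the simplicity (and infiniteness) of $[F,F]$; everything else is a routine coset and index bookkeeping built on top of the identification $K_{(a,b)}=\pi^{-1}(a\IZ\oplus b\IZ)$.
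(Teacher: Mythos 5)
Your proof is correct, but it cannot be compared line-by-line with a proof in the paper for a simple reason: the paper does not prove this lemma at all. It is recalled verbatim from \cite[Lemma 4.2]{TV2}, and the argument there (consistent with how the present paper uses these facts elsewhere, e.g.\ ``the map $\pi$ provides a bijection between finite index subgroups of $F$ and those of $\IZ\oplus\IZ$, \cite{BW}'') runs through the Bleak--Wassink correspondence: every finite-index subgroup of $F$ contains $[F,F]$, hence equals the full preimage $\pi^{-1}(\pi(G))$; so if $G$ had finite index one would get $G=K_{(a,b)}$, a contradiction. That correspondence in turn rests on the Cannon--Floyd--Parry theorem that every proper quotient of $F$ is abelian (via the normal core: a finite-index subgroup contains a finite-index normal subgroup $N$, and $F/N$ finite forces $F/N$ abelian, i.e.\ $[F,F]\subseteq N$). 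Your route replaces that ingredient with the simplicity of $[F,F]$: from $G\subseteq K_{(a,b)}=G[F,F]$ and the index identity $[K_{(a,b)}:G]=[[F,F]:G\cap[F,F]]$ you conclude via ``an infinite simple group has no proper finite-index subgroup.'' Both arguments are anchored in CFP's structure theory of normal subgroups of $F$, but yours is self-contained modulo the single fact that $[F,F]$ is infinite and simple (infiniteness being immediate since $[F,F]$ is a nontrivial subgroup of a torsion-free group), avoids invoking the Bleak--Wassink classification, and yields the marginally sharper conclusion $[K_{(a,b)}:G]=\infty$ rather than only $[F:G]=\infty$; what the paper's citation buys instead is the full finite-index classification, which the authors reuse elsewhere (e.g.\ in the proof that $\tilde\CE=H$). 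All the bookkeeping steps you perform (the identity $K_{(a,b)}=\pi^{-1}(a\IZ\oplus b\IZ)$, the product formula for the normal subgroup $[F,F]$, multiplicativity of indices) are valid.
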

  \begin{proposition}
  It holds
  \begin{enumerate}
\item  the index of $\CEE$ in $\CE$ is $2$,  
\item the index of $\CEE$ in $K_{(2,2)}$ is infinite,
\item the index of $\CEE$ in $F$ is infinite,
\item the index of $\CE$ in $F$ is infinite.
\end{enumerate}
  \end{proposition}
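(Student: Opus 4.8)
The plan is to prove the four statements in the order (1), (3), (2), (4), since the last three build on one another. For (1), I would introduce the map $\phi\colon\CE\to\IZ/2\IZ$, $\phi(f)=\log_2 f'(0)\bmod 2$; this is a homomorphism because $\log_2(f\cdot g)'(0)=\log_2 f'(0)+\log_2 g'(0)$. By Lemma \ref{lemmarootcol}, for $f\in\CE$ the condition $\log_2 f'(0)\in 2\IZ$ is equivalent to $\log_2 f'(1)\in 2\IZ$, so $\ker\phi=\CE\cap K_{(2,2)}=\CEE$. Since $x_0\in\CE$ (Example \ref{esex0}) has $\log_2 x_0'(0)$ odd, $\phi(x_0)\neq 0$, so $\phi$ is onto and $[\CE:\CEE]=2$, proving (1) and incidentally that $\CEE$ is normal in $\CE$.

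For (3) I would invoke Lemma \ref{Lemmamax} with $a=b=2$, which reduces the claim to computing $\pi(\CEE)$ and checking $\CEE\neq K_{(2,2)}$. Using the generating set $\CEE=\langle x_0^2,\varphi_R(\CF)\rangle$ of Theorem \ref{genE1}, I note that $\pi(x_0^2)=2\pi(x_0)=(-2,2)$ under the standard normalisation $\pi(x_0)=(-1,1)$ (the argument is insensitive to the sign convention), whereas every element of $\varphi_R(\CF)$ is a word in the $x_i$ with $i\geq 1$ and so has first coordinate $0$; evaluating $\pi$ on $\varphi_R(x_0x_1^2x_0^{-1})=x_1x_2^2x_1^{-1}$ yields $(0,2)$. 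As $(-2,2)$ and $(0,2)$ generate $2\IZ\oplus 2\IZ$, and $\CEE\subseteq K_{(2,2)}$ forces $\pi(\CEE)\subseteq 2\IZ\oplus 2\IZ$, I conclude $\pi(\CEE)=2\IZ\oplus 2\IZ$. Finally $x_1^2\in K_{(2,2)}\setminus\CEE$ (Example \ref{esex12}) shows $\CEE\neq K_{(2,2)}$, so Lemma \ref{Lemmamax} gives $[F:\CEE]=\infty$.

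Statements (2) and (4) then fall out of the multiplicativity of the index. Since $K_{(2,2)}=\pi^{-1}(2\IZ\oplus 2\IZ)$ and $\pi$ is surjective, $[F:K_{(2,2)}]=[\IZ^2:2\IZ\oplus 2\IZ]=4$ is finite; combining this with $[F:\CEE]=[F:K_{(2,2)}]\,[K_{(2,2)}:\CEE]=\infty$ forces $[K_{(2,2)}:\CEE]=\infty$, which is (2). Similarly $[F:\CEE]=[F:\CE]\,[\CE:\CEE]=2\,[F:\CE]$, and since $[F:\CEE]=\infty$ this yields $[F:\CE]=\infty$, which is (4).

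The single computational step I expect to be the main obstacle is the identity $\pi(\CEE)=2\IZ\oplus 2\IZ$, and more precisely keeping track of how $\varphi_R$ acts on the abelianization: one must verify that $\varphi_R$ turns $x_0$, whose first coordinate is nonzero, into $x_1$, whose first coordinate vanishes, while leaving the second coordinate $\log_2(\cdot)'(1)$ of each generator unchanged. Everything else is formal manipulation of $\pi$ and of indices, so no further difficulty is anticipated.
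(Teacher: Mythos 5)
Your proof is correct. Its computational core coincides with the paper's: both rest on Lemma \ref{Lemmamax} applied with $a=b=2$, on the value $\pi(\CEE)=2\IZ\oplus 2\IZ$ extracted from the generating set of Theorem \ref{genE1}, and on the witness $x_1^2\in K_{(2,2)}\setminus\CEE$ of Example \ref{esex12}; likewise your proof of (1) via the kernel of $f\mapsto\log_2 f'(0)\bmod 2$ is a rephrasing of the paper's coset decomposition $\CE=\CEE\sqcup x_0\CEE$. The genuine difference is the logical architecture around item (4). The paper proves (4) by representation theory: the quasi-regular representation of $F$ associated with $\CE$ is irreducible by \cite{Jo19}, all finite-dimensional irreducible representations of $F$ are one-dimensional by \cite{DM}, and the dimension of a quasi-regular representation equals the index, so the proper subgroup $\CE$ has infinite index; item (3) is then deduced from (4) and (1). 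You run the implications the other way: Lemma \ref{Lemmamax} yields (3) directly (its conclusion is precisely infinite index in $F$), then (2) follows by multiplicativity of the index along $\CEE\leq K_{(2,2)}\leq F$ together with $[F:K_{(2,2)}]=4<\infty$, and (4) follows from $[F:\CEE]=[F:\CE]\,[\CE:\CEE]=2\,[F:\CE]$. Your route is more elementary and self-contained, needing no input from Jones' irreducibility theorem or from the finite-dimensionality result of \cite{DM}; the paper's route has the feature that (4) is established independently of the abelianization computation. A minor bonus of your version: by evaluating $\pi$ on $x_0^2$ and on $\varphi_R(w_1)=x_1x_2^2x_1^{-1}$ you sidestep the slip in the paper's proof of (2), which cites $\pi(\varphi_L(x_0^2))=(2,0)$ even though $\varphi_L(x_0^2)\notin\CE$ by Lemma \ref{operationsonE}-(4) and Proposition \ref{propintersectionpos} (evidently $x_0^2$ itself was intended). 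Your remark that the sign convention for $\pi$ is immaterial is also correct, since either choice produces a pair of vectors generating $2\IZ\oplus 2\IZ$.
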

\begin{proof}
(1) It holds $\CE=\CEE\sqcup x_0\CEE$ and so the index is $2$. %Indeed, take $g\in\CE$ and write it in its normal form. 
By 
Proposition \ref{propintersectionK12} $\log_2 f'(0)$ and $\log_2f'(1)$ have the same parity. Therefore, either $\log_2 f'(0)$ and $\log_2 f'(1)$ are even, or odd.
In the first case, this means $g\in \CE\cap K_{(2,2)}=\CEE$.
In the second we have that $g$ belong to $(F\setminus K_{(2,2)})\cap \CE$. We may write $g$ as $x_0 (x_0^{-1}g)$,
with $\log_2 g'(0),\log_2 g'(1)\in 2\IZ+1$.
Now observe that $x_0^{-1}g\in K_{(2,2)}$ since
$$
\pi(x_0^{-1}g)=\pi(x_0^{-1})+\pi(g)=(-1,1)+\pi(g)\in 2\IZ\oplus 2\IZ\; .
$$ 

(2) 
By Lemma \ref{operationsonE}
we know that $\varphi_L(w_i)$, $\varphi_R(w_i)\in \CE$ for $i=0$, $1$, $2$, $3$, where $\{w_i\}_{i=1}^4$ are the generators of $\CF$ (see Figure \ref{genCF}).
It is easy to see that $\pi(w_0)=(2,-2)$, $\pi(w_1)=(0,-2)$, $\pi(w_2)=(0,-2)$, $\pi(w_3)=(0,-2)$.
Since $\pi(\varphi_R(w_i))=(0,-2)$ for $i=0, 1, 2, 3$ and $\pi(\varphi_L(x_0^2))=(2,0)$, 
we have $\pi(\CEE)=\pi(K_{(2,2)})=2\IZ\oplus 2\IZ$. 
The subgroup $\CEE$ is a proper subgroup of $K_{(2,2)}$ since $x_1^2$ is not in $\CEE$ (see Example \ref{esex12}).
By Lemma \ref{Lemmamax} the subgroup $\CEE$ of $K_{(2,2)}$ has infinite index.

(4) follows from (1) and (2).
%we know by \cite{Jo19} that  the quasi-regular representation of $F$ associated with $\CE$ is irreducible.
%By \cite{DM} all the irreducible finite dimensional representations of $F$ are one dimensional. 
%The dimension of the quasi-regular representation of $F$ associated with a subgroup 
%is equal to its index and hence $\CE$ is of infinite index as it is a proper subgroup of $F$.
 
(3) follows from (4) and (1).
 \end{proof}

 \begin{remark}
In \cite{TV2} we introduced the subgroup $M_0$ which contains $\CF$  and proved that it is maximal and of infinite index in $K_{(2,2)}$ (in particular, since $F$ and   $K_{(2,2)}$ are isomorphic, we obtained a maximal subgroup of infinite index of $F$ isomorphic to $M_0$).
Since $M_0$ contains $\varphi_R(\CF)$ and $x_0^2$, it is clear that
the subgroup $\CEE$ is a subgroup of $M_0$.
It is easy to see that $x_2^2$ is not in $\CEE$, but it is in $M_0$ \cite{TV2}.
Therefore, $\CEE$ is a proper subgroup of $M_0$. 
\end{remark}
\begin{remark}
The subgroup $\CE$ is not maximal in $F$. Indeed, $\pi(\CE)$ is contained in the  
subgroup $\langle (1,-1),(0,2)\rangle$ of index $2$ in $\IZ\oplus \IZ$.
\end{remark} 

 %In view of this remark it is natural to wonder whether  $\CE$ weakly maximal.
%A subgroup is called weakly maximal if it is maximal among the subgroups of infinite index. 
%We do not know the answer to this question, but 
%here we provide some evidence that $\CE$ is indeed weakly maximal in $F$. More precisely, 
Now we show that
for any positive element $g$ which is not a power of $x_0$, the group $\langle g, \CE\rangle$ is a subgroup of $F$ of index at most $2$.
We recall from Section \ref{sec1} that the positive elements of $F$ are those in the 
 monoid $F_+$ generated by $x_0, x_1, x_2, \ldots$.    
 \begin{proposition}\label{propintersectionpos}
There are no non-trivial positive elements in $\CF$.
 The positive elements of $\CE$ form the monoid generated by $x_0$,
 that is 
$F_+\cap \CE=\{ x_0^n\; | \; n\geq 0 \}$. %, $F_+\cap \CF=1$.
  \end{proposition}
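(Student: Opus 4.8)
The plan is to deduce the second assertion from the first, so I start with the claim that $\CF$ has no non-trivial positive element. The input I would use is that a positive element, written in its positive normal form $x_0^{a_0}\cdots x_n^{a_n}$ with $a_i\in\IN_0$, is represented by a reduced tree diagram $(T_+,T_-)$ whose \emph{bottom} tree $T_-$ is a right vine, i.e.\ all of its carets lie on the rightmost branch. I would then isolate the following rigidity statement about the forced $3$-colouring of a single binary tree coloured, near its root, by $0$ on the left ($U_L$), $1$ on the right ($U_R$) and $2$ below ($B$): the colours of the regions lying over the \emph{internal} gaps between consecutive leaves avoid the colour $1$ if and only if the tree is a right vine.

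The proof of this rigidity I have in mind tracks the colour $1$ along the right spine of the tree. Descending to a right child acts by $(U_L,U_R,B)\mapsto(B,U_R,U_L)$, which fixes the region $U_R$ to the right, so along the right spine the colour $1$ stays in the right position and never appears in the below position; a subsequent descent to a left child acts by $(U_L,U_R,B)\mapsto(U_L,B,U_R)$ and pushes exactly this colour $1$ into the below position. Hence, if some right-spine vertex has a non-leaf left child, that child is the lowest common ancestor of a consecutive pair of leaves and its below-region — the colour of the corresponding internal gap — is forced to be $1$; conversely a right vine uses only the colours $0,2$ on its internal gaps. This gives the equivalence.

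Now let $f\in\CF$ be positive and non-trivial, with reduced diagram $(T_+,T_-)$ and $T_-$ a right vine. Since $f\in\CF$ the strip is $3$-colourable, and for elements of $\CF$ the canonical colouring assigns the same colours to the two root regions (this is exactly the feature distinguishing $\CF$ from $\CE$, see \cite{TV2}); moreover the region over each gap is a single region of the strip, so the gap-colourings induced by $T_+$ and by $T_-$ coincide. As $T_-$ is a right vine its internal gaps avoid colour $1$, hence so do those of $T_+$, and the rigidity statement forces $T_+$ to be a right vine as well. Then $T_+=T_-$, and a \emph{reduced} diagram with equal top and bottom trees must have trivial trees; thus $f=\mathrm{id}$, a contradiction, which proves the first assertion.

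For the second assertion the inclusion $\{x_0^n\mid n\ge 0\}\subseteq F_+\cap\CE$ is immediate from $x_0\in\CE$ (Example \ref{esex0}). Conversely, given $g\in F_+\cap\CE$ I would take its positive normal form $g=x_0^{a_0}x_1^{a_1}\cdots x_n^{a_n}$ and set $g''=x_0^{a_1}x_1^{a_2}\cdots x_{n-1}^{a_n}$, a positive element, so that $x_0^{-a_0}g=x_1^{a_1}\cdots x_n^{a_n}=\varphi_R(g'')$. Since $x_0\in\CE$ and $\CE$ is a group, $\varphi_R(g'')\in\CE$, whence $g''\in\CF$ by Lemma \ref{operationsonE}(3); being positive, $g''$ is trivial by the first assertion, so $a_1=\cdots=a_n=0$ and $g=x_0^{a_0}$. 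The main obstacle is the rigidity lemma of the first two paragraphs; the passage to the second assertion and the two structural facts I invoke (positive elements have right-vine bottom trees, and elements of $\CF$ have matching root colours) are routine or already recorded in \cite{CFP,TV2}, though I would state them explicitly.
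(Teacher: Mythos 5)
Your proof is correct, but it takes a genuinely different route from the paper's. The paper proves both assertions simultaneously by induction on the number of leaves of the top tree: using that the bottom tree of a reduced positive element is a right vine (so its inner regions carry alternating colours), it runs a case analysis on which caret configurations can sit at the bottom of the top tree, showing each is incompatible with $3$-colourability except the one containing the first leaf, where one either multiplies by $x_0^{-1}\in\CE$ and applies the inductive hypothesis or, in the $\CF$-subcase, derives a colour clash at the root of the bottom tree. You instead isolate a rigidity lemma --- in the forced colouring of a single tree, the internal gap colours avoid $1$ if and only if the tree is a right vine --- and combine it with two facts the paper uses only implicitly: that positive elements have right-vine bottom trees (standard from \cite{CFP}), and that for $f\in\CF$ the gap colourings induced by $T_+$ and $T_-$ agree (the $\CF$-analogue of Proposition \ref{prop-descr-3col}, available in \cite{TV2}). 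Your transition rules for the colouring, right descent $(U_L,U_R,B)\mapsto(B,U_R,U_L)$ and left descent $(U_L,U_R,B)\mapsto(U_L,B,U_R)$, are correct, as is the observation that a non-right-vine tree has a right-spine vertex with an internal left child whose below-region is forced to be $1$; so the $\CF$ assertion follows with no induction on tree diagrams. You then deduce the $\CE$ assertion from the $\CF$ one by peeling off $x_0^{a_0}$ and invoking Lemma \ref{operationsonE}(3), which is exactly the manoeuvre the paper uses in proving Theorem \ref{genE}, whereas the paper's own proof of this proposition handles $\CE$ directly inside the same induction. What your approach buys is a cleaner conceptual statement (right vines are characterised among coloured trees by avoidance of the colour $1$ on internal gaps) and systematic reuse of the shift machinery; what the paper's approach buys is self-containedness, since it never needs the matching-gap-colour characterisation of $\CF$ and treats $\CE$ and $\CF$ in one uniform case analysis.
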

  \begin{proof}
We use the same argument  for both $\CF$ and $\CE$.
  Consider a reduced positive element $(T_+,T_-)$ in $\CF$ or $\CE$.
  We observe that the colors of the inner regions in the bottom tree are always alternating (this can be easily seen by looking at the shape of the bottom tree in a reduced tree diagram
of every positive element). %We will check that these colorings are often incompatible with those of the top tree.
  We give a proof by induction on the number $n$ of the leaves  of $T_+$. 
  If $n=1$ or $3$, there is only two elements, namely the identity and $x_0$. The latter belongs to $\CE$, but not to $\CF$.
  Suppose that $n\geq 4$.
The top tree contains one of the following three subtrees (the leaves of theses subtrees are the same as those of the top tree)
 \[
\begin{tikzpicture}[x=1.75cm, y=1.75cm,
    every edge/.style={
        draw,
      postaction={decorate,
                    decoration={markings}
                   }
        }
]

\draw[thick] (0.5,0.75)--(.5,.5);%--(1,0)--(.5,-.5)--(0,0);
%\draw[thick] (0.65,0)--(.75,.25);%--(1,0)--(.5,-.5)--(0,0);
\draw[thick] (0.35,0)--(.25,.25);%--(1,0)--(.5,-.5)--(0,0);
\draw[thick] (0,0)--(.5,.5)--(1,0);

%\node at (0,-1.2) {$\;$};
\end{tikzpicture}
\qquad
\begin{tikzpicture}[x=1.75cm, y=1.75cm,
    every edge/.style={
        draw,
      postaction={decorate,
                    decoration={markings}
                   }
        }
]

\draw[thick] (0.5,0.75)--(.5,.5);%--(1,0)--(.5,-.5)--(0,0);
\draw[thick] (0.65,0)--(.75,.25);%--(1,0)--(.5,-.5)--(0,0);
%\draw[thick] (0.35,0)--(.25,.25);%--(1,0)--(.5,-.5)--(0,0);
\draw[thick] (0,0)--(.5,.5)--(1,0);
 
%\node at (0,-1.2) {$\;$};
\end{tikzpicture}
\qquad
\begin{tikzpicture}[x=1.75cm, y=1.75cm,
    every edge/.style={
        draw,
      postaction={decorate,
                    decoration={markings}
                   }
        }
]

\draw[thick] (0.5,0.75)--(.5,.5);%--(1,0)--(.5,-.5)--(0,0);
\draw[thick] (0.65,0)--(.75,.25);%--(1,0)--(.5,-.5)--(0,0);
\draw[thick] (0.35,0)--(.25,.25);%--(1,0)--(.5,-.5)--(0,0);
\draw[thick] (0,0)--(.5,.5)--(1,0);

%\node at (0,-1.2) {$\;$};
\end{tikzpicture}
 \]
 We will show that in many cases these subtrees cannot occur because the colors determined by the bottom tree are not compatible with  the top tree.
 
 If these subtrees do not contain
%If the subtree does not contain 
 the first or the last leaf of the tree, then the tree diagram is not $3$-colorable  
% because the 
 %thus the element does not belong to $\CE$ or $\CF$
% tree is not $3$-colorable 
($i$ and $j$ are two distinct colors) as shown below
 \[
\begin{tikzpicture}[x=1.75cm, y=1.75cm,
    every edge/.style={
        draw,
      postaction={decorate,
                    decoration={markings}
                   }
        }
]

\draw[thick] (0.5,0.75)--(.5,.5);%--(1,0)--(.5,-.5)--(0,0);
%\draw[thick] (0.65,0)--(.75,.25);%--(1,0)--(.5,-.5)--(0,0);
\draw[thick] (0.35,0)--(.25,.25);%--(1,0)--(.5,-.5)--(0,0);
\draw[thick] (0,0)--(.5,.5)--(1,0);

\node at (-0.15,0) {$\scalebox{1}{$i$}$};
\node at (0.15,0) {$\scalebox{1}{$j$}$};
\node at (0.65,0) {$\scalebox{1}{$i$}$};
\node at (1.15,0) {$\scalebox{1}{$j$}$};
%\node at (0,-1.2) {$\;$};
\end{tikzpicture}
\qquad
\begin{tikzpicture}[x=1.75cm, y=1.75cm,
    every edge/.style={
        draw,
      postaction={decorate,
                    decoration={markings}
                   }
        }
]

\draw[thick] (0.5,0.75)--(.5,.5);%--(1,0)--(.5,-.5)--(0,0);
\draw[thick] (0.65,0)--(.75,.25);%--(1,0)--(.5,-.5)--(0,0);
%\draw[thick] (0.35,0)--(.25,.25);%--(1,0)--(.5,-.5)--(0,0);
\draw[thick] (0,0)--(.5,.5)--(1,0);

\node at (-0.15,0) {$\scalebox{1}{$i$}$};
\node at (0.45,0) {$\scalebox{1}{$j$}$};
\node at (0.85,0) {$\scalebox{1}{$i$}$};
\node at (1.15,0) {$\scalebox{1}{$j$}$};
%\node at (0,-1.2) {$\;$};
\end{tikzpicture}
\qquad
\begin{tikzpicture}[x=1.75cm, y=1.75cm,
    every edge/.style={
        draw,
      postaction={decorate,
                    decoration={markings}
                   }
        }
]

\draw[thick] (0.5,0.75)--(.5,.5);%--(1,0)--(.5,-.5)--(0,0);
\draw[thick] (0.65,0)--(.75,.25);%--(1,0)--(.5,-.5)--(0,0);
\draw[thick] (0.35,0)--(.25,.25);%--(1,0)--(.5,-.5)--(0,0);
\draw[thick] (0,0)--(.5,.5)--(1,0);

\node at (-0.15,0) {$\scalebox{1}{$i$}$};
\node at (0.15,0) {$\scalebox{1}{$j$}$};
\node at (0.55,0) {$\scalebox{1}{$i$}$};
\node at (0.85,0) {$\scalebox{1}{$j$}$};
\node at (1.15,0) {$\scalebox{1}{$i$}$};

%\node at (0,-1.2) {$\;$};
\end{tikzpicture}
 \]
 The second and third subtrees cannot have the right-most leaf of the top-tree because otherwise the tree diagram would not be reduced.
The first subtree cannot have the right-most leaf of the top-tree either because it would not be 
% If the subtree contains the last leaf of the tree, then it is not
 $3$-colorable ($i$, $j$, $k$ are distinct colors)
 \[
\begin{tikzpicture}[x=1.75cm, y=1.75cm,
    every edge/.style={
        draw,
      postaction={decorate,
                    decoration={markings}
                   }
        }
]

\draw[thick] (0.5,0.75)--(.5,.5);%--(1,0)--(.5,-.5)--(0,0);
%\draw[thick] (0.65,0)--(.75,.25);%--(1,0)--(.5,-.5)--(0,0);
\draw[thick] (0.35,0)--(.25,.25);%--(1,0)--(.5,-.5)--(0,0);
\draw[thick] (0,0)--(.5,.5)--(1,0);

\node at (-0.15,0) {$\scalebox{1}{$i$}$};
\node at (0.15,0) {$\scalebox{1}{$j$}$};
\node at (0.65,0) {$\scalebox{1}{$i$}$};
\node at (1.15,0) {$\scalebox{1}{$k$}$};
%\node at (0,-1.2) {$\;$};
\end{tikzpicture}\]
Now it remains to consider the cases when these subtrees have the left-most leaf of the top tree  ($i$, $j$, $k$ are distinct colors).
% If the subtree contains the first leaf of the tree, then one of the following three subtrees occur
 \[
\begin{tikzpicture}[x=1.75cm, y=1.75cm,
    every edge/.style={
        draw,
      postaction={decorate,
                    decoration={markings}
                   }
        }
]

\draw[thick] (0.5,0.75)--(.5,.5);%--(1,0)--(.5,-.5)--(0,0);
%\draw[thick] (0.65,0)--(.75,.25);%--(1,0)--(.5,-.5)--(0,0);
\draw[thick] (0.35,0)--(.25,.25);%--(1,0)--(.5,-.5)--(0,0);
\draw[thick] (0,0)--(.5,.5)--(1,0);

\node at (-0.15,0) {$\scalebox{1}{$k$}$};
\node at (0.15,0) {$\scalebox{1}{$j$}$};
\node at (0.65,0) {$\scalebox{1}{$i$}$};
\node at (1.15,0) {$\scalebox{1}{$j$}$};
%\node at (0,-1.2) {$\;$};
\end{tikzpicture}
\qquad
\begin{tikzpicture}[x=1.75cm, y=1.75cm,
    every edge/.style={
        draw,
      postaction={decorate,
                    decoration={markings}
                   }
        }
]

\draw[thick] (0.5,0.75)--(.5,.5);%--(1,0)--(.5,-.5)--(0,0);
\draw[thick] (0.65,0)--(.75,.25);%--(1,0)--(.5,-.5)--(0,0);
%\draw[thick] (0.35,0)--(.25,.25);%--(1,0)--(.5,-.5)--(0,0);
\draw[thick] (0,0)--(.5,.5)--(1,0);

\node at (-0.15,0) {$\scalebox{1}{$k$}$};
\node at (0.45,0) {$\scalebox{1}{$j$}$};
\node at (0.85,0) {$\scalebox{1}{$i$}$};
\node at (1.15,0) {$\scalebox{1}{$j$}$};
%\node at (0,-1.2) {$\;$};
\end{tikzpicture}
\qquad
\begin{tikzpicture}[x=1.75cm, y=1.75cm,
    every edge/.style={
        draw,
      postaction={decorate,
                    decoration={markings}
                   }
        }
]

\draw[thick] (0.5,0.75)--(.5,.5);%--(1,0)--(.5,-.5)--(0,0);
\draw[thick] (0.65,0)--(.75,.25);%--(1,0)--(.5,-.5)--(0,0);
\draw[thick] (0.35,0)--(.25,.25);%--(1,0)--(.5,-.5)--(0,0);
\draw[thick] (0,0)--(.5,.5)--(1,0);

\node at (-0.15,0) {$\scalebox{1}{$k$}$};
\node at (0.15,0) {$\scalebox{1}{$j$}$};
\node at (0.55,0) {$\scalebox{1}{$i$}$};
\node at (0.85,0) {$\scalebox{1}{$j$}$};
\node at (1.15,0) {$\scalebox{1}{$i$}$};

%\node at (0,-1.2) {$\;$};
\end{tikzpicture}
 \]
 In the second and third case, the subtree cannot be contained in the tree diagram because the corresponding regions of the plane are not $3$-colorable.

For the first subtree we divide our proof into two parts because the present proposition has two statements: one for $\CF$ and one for $\CE$.
%consider the two statements of our theorem
 %there are two subcases: in the first $(T_+,T_-)\in\CE$, in the second $(T_+,T_-)\in \CF$.
%In the first subcase, that is when
For $\CF$, we want to show that actually this subtree cannot be contained in the top tree and thus the only positive element of $\CF$ is the trivial element (because we have shown that there is no reduced tree diagram with more than $1$ leaf).
The element does not belong to $\CF$ because the first four regions have colors  $k$, $j$, $i$, $j$ (with $i, j, k,$ distinct),  
but at the same time the region to the right of the root of the bottom tree must have color $k'$ different from  $k$, $j$, $i$, $j$, which is impossible (see the figure below).
 \[
\begin{tikzpicture}[x=1.75cm, y=1.75cm,
    every edge/.style={
        draw,
      postaction={decorate,
                    decoration={markings}
                   }
        }
]

\draw[thick] (0.5,0.75)--(.5,.5);%--(1,0)--(.5,-.5)--(0,0);
\draw[thick] (0.35,0)--(.25,.25);%--(1,0)--(.5,-.5)--(0,0);
\draw[thick] (0,0)--(.5,.5)--(1,0);

\draw[thick] (0,0)--(1.5,-1.5)--(2.25,-.75);
\draw[thick] (0.35,0)--(1.75,-1.25);
\draw[thick] (1,0)--(2,-1);
\draw[thick] (1.5,-1.5)--(1.5,-1.75);

\node at (1.75,-1.5) {$\scalebox{1}{$k'$}$};
\node at (-0.15,0) {$\scalebox{1}{$k$}$};
\node at (0.15,0) {$\scalebox{1}{$j$}$};
\node at (0.65,0) {$\scalebox{1}{$i$}$};
\node at (1.15,0) {$\scalebox{1}{$j$}$};
%\node at (0,-1.2) {$\;$};
\end{tikzpicture}
\]
It remains to consider the case when $(T_+,T_-)$ is in $\CE$.
In this case,  we  multiply $(T_+,T_-)$ by $x_0^{-1}\in \CE$ and obtain a positive tree diagram $(T_+',T_-')$, where $T'$ has $n-2$ leaves and for which we may apply the inductive argument.
  \end{proof}

%We begin with Proposition \ref{propintersectionpos}.
We give a small improvement to  \cite[Proposition 5.4]{TV2}. 
%Arguments  similar to those of \cite[Proposition 5.4]{TV2} now yield the following result.
\begin{lemma}\label{propposCE}
Let $g\in F_+$, then $\langle g,\CF\rangle$ contains $M_0:=\langle x_0^2, \CF\rangle$, $M_1:=\langle x_1^2, \CF\rangle$, or $K_{(2,2)}$.
\end{lemma}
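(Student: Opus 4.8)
The plan is to collapse the trichotomy into a single membership statement and then exploit the maximality of $M_0$ and $M_1$ inside $K_{(2,2)}$. First I would record the relevant numerology: each generator $w_0=x_0^2x_1x_2^{-1}$, $w_1=x_0x_1^2x_0^{-1}$, $w_2=x_1^2x_3x_2^{-1}$, $w_3=x_2^2x_3x_4^{-1}$ of $\CF$ lies in $K_{(2,2)}$, so $\CF\le K_{(2,2)}$ and $\pi(\CF)=2\IZ\oplus 2\IZ$. Writing $M_0=\langle x_0^2,\CF\rangle$ and $M_1=\langle x_1^2,\CF\rangle$ (the subgroups of \cite{TV2}, maximal of infinite index in $K_{(2,2)}$ and containing $\CF$), I note that since $\CF\le\langle g,\CF\rangle$ always, the lemma is \emph{equivalent} to the claim that $\langle g,\CF\rangle$ contains $x_0^2$ or $x_1^2$. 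Indeed $x_0^2\in\langle g,\CF\rangle$ gives $M_0\le\langle g,\CF\rangle$, and symmetrically for $x_1^2$ and $M_1$; conversely each of the three target subgroups contains $x_0^2$ or $x_1^2$ (as $K_{(2,2)}\supseteq M_0\ni x_0^2$). The third alternative is exactly the ``both'' case: if $\langle g,\CF\rangle$ contains $x_0^2$ and $x_1^2$, then $\langle x_0^2,x_1^2,\CF\rangle=\langle M_0,x_1^2\rangle$, which by maximality of $M_0$ in $K_{(2,2)}$ is either $M_0$ (when $x_1^2\in M_0$) or all of $K_{(2,2)}$.

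To prove the reduced claim I would argue by induction on the complexity of a nontrivial $g\in F_+$, measured by the number of leaves of its reduced tree diagram (equivalently, the length of its positive normal form $g=x_{i_1}\cdots x_{i_m}$ together with the largest index occurring). By Proposition \ref{propintersectionpos} the only positive element of $\CF$ is the identity, so $g\notin\CF$ and the induction cannot collapse prematurely into $\CF$. The inductive step multiplies $g$ on one side by a chosen element of $\CF$ so as to cancel the trailing high-index generators and strictly lower the complexity, without changing $\langle g,\CF\rangle$: the defining relations $x_nx_k=x_kx_{n+1}$ $(k<n)$, combined with the identities $w_0x_2=x_0^2x_1$, $w_2x_2=x_1^2x_3$ and $x_2^{-2}w_3=x_3x_4^{-1}$, furnish rewriting moves that trade a generator $x_j$ with $j\ge 2$ for generators of lower index modulo $\CF$. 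Iterating brings $g$, modulo $\CF$, to a short word in $x_0$ and $x_1$; on these finitely many base words one reads off, via the parity statement of Lemma \ref{lemmarootcol}, whether $\log_2 g'(0)$ and $\log_2 g'(1)$ steer the extracted generator to $x_0^2$ (hence $M_0$), to $x_1^2$ (hence $M_1$), or to both (hence $K_{(2,2)}$ by the first paragraph). The flip $\sigma$ and the shifts $\varphi_R,\varphi_L$ of Lemma \ref{operationsonE} can be used to cut down the number of configurations to inspect.

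The main obstacle is precisely the bookkeeping in this inductive step: one must check that \emph{every} positive word reduces, modulo multiplication by $\CF$, to one of the prescribed short forms, and that the reduction never strands $\langle g,\CF\rangle$ in a proper subgroup lying below all three targets. This is where the argument follows the pattern of \cite[Proposition 5.4]{TV2}, and where Proposition \ref{propintersectionpos} does the decisive work: because $\CF$ has no nontrivial positive elements, the surviving low-index word cannot itself be absorbed into $\CF$, guaranteeing that a genuine copy of $x_0^2$ or $x_1^2$ — and not merely an element of $\CF$ — is produced inside $\langle g,\CF\rangle$.
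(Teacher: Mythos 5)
You take essentially the same route as the paper — and at the level of detail available you could hardly do otherwise, since the paper records no proof body for Lemma \ref{propposCE} at all: it only says that the result follows from Proposition \ref{propintersectionpos} by ``arguments similar to those of'' \cite[Proposition 5.4]{TV2}. Your attempt defers exactly the same combinatorial core (the induction showing that every nontrivial positive $g$ forces $x_0^2$ or $x_1^2$ into $\langle g,\CF\rangle$) to that same TV2 pattern, so it is neither more nor less complete than what the paper itself writes. What you add in front is correct and clarifying: you silently repair the typo in the statement (the subgroups must be $M_0=\langle x_0^2,\CF\rangle$ and $M_1=\langle x_1^2,\CF\rangle$; as printed, $\langle x_0^2,\CE\rangle$ would just be $\CE$, and the paper's later use of maximality of $M_0,M_1$ in $K_{(2,2)}$ only makes sense for the $\CF$-versions), and your equivalence of the trichotomy with ``$x_0^2\in\langle g,\CF\rangle$ or $x_1^2\in\langle g,\CF\rangle$'', with the ``both'' case settled by maximality of $M_0$ in $K_{(2,2)}$, is a clean reduction that the paper leaves unsaid.

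Two warnings about the sketched induction, which is where all the real work sits. First, the endgame you propose is not sound as stated: abelianization data cannot decide which of the three subgroups is produced. Indeed $\pi(x_1^2)=\pi(x_2^2)=(0,-2)$, yet $\langle x_1^2,\CF\rangle=M_1$ by definition, while $\langle x_2^2,\CF\rangle=M_0$ (by the facts from \cite{TV2} quoted in the paper, $x_2^2\in M_0$ and $x_1^2\notin M_0$, so $\langle x_2^2,\CF\rangle\subseteq M_0$ and cannot contain $M_1$ or $K_{(2,2)}$). Moreover Lemma \ref{lemmarootcol} is a statement about elements of $\CE$ and their root colours; it says nothing about an arbitrary positive $g$ or its $\CF$-coset, so it cannot serve as the deciding criterion — the case distinction has to emerge from the tree-diagram case analysis itself, in the style of the proof of Proposition \ref{propintersectionpos}. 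Second, your complexity measure is the length of the positive normal form, but multiplying by elements of $\CF$ (which, by Proposition \ref{propintersectionpos}, are never positive except the identity) does not in general return a positive word; one must check at each step that the chosen move lands again on a positive word of strictly smaller complexity. That bookkeeping, which you explicitly leave open, is precisely the content of the TV2-style argument that both you and the paper invoke.
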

\begin{proof}
If $g\in K_{(2,2)}$ this is exactly the content of \cite[Proposition 5.4]{TV2}. 
Otherwise, suppose that $g\not\in K_{(2,2)}$ and
write $g$ in its normal form as $x_0^{a_0}\cdots x_n^{a_n}$. This means that either $\sum_{i=0}^n a_i\equiv_2 1$ or $a_0\equiv_2 1$.
The element $g^2$ is a positive element of $K_{(2,2)}$.
By Proposition \ref{propintersectionpos} we know that $g^2$ is not $\CF$.
Since $\langle g^2, \CF\rangle \subset \langle g, \CF\rangle$,
by applying  \cite[Proposition 5.4]{TV2}  we have that 
$\langle g^2, \CF\rangle $ contains one among $M_0$, $M_1$ and $K_{(2,2)}$ and we are done. 
\end{proof}
\begin{proposition}
For any $g\in F_+\setminus \CE$, the subgroup $\langle g, \CE\rangle$ contains the subgroup 
$$
H:=\{f\in F\; | \; \log_2f'(0)\equiv_2 \log_2f'(1)		\}\; 
$$
where $\equiv_2$ denotes equivalence modulo $2$.
\end{proposition}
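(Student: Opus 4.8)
The plan is to reduce everything to the single claim that $\langle g,\CE\rangle$ contains the $3$-colorable subgroup $\CF$, and then to extract that containment from the positive element $g$ by means of Lemma \ref{propposCE}.

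First I would record the ambient index-$2$ picture. By Proposition \ref{propintersectionK12} we have $\CE\subseteq H$; also $\CF\subseteq K_{(2,2)}$, since its generators $w_0,\dots,w_3$ have $\pi$-image in $2\IZ\oplus2\IZ$, so $\CF\subseteq H$ as well, and $[F,F]=\ker\pi\subseteq H$. On abelianizations $\pi(H)=\langle(1,-1),(0,2)\rangle$ is exactly the index-$2$ subgroup of $\IZ\oplus\IZ$ appearing in the Remark, and $\pi(\CE)$ already fills it out, because $\pi(x_0)=(1,-1)$ and $\pi(\varphi_R(w_0))=(0,-2)$ with $x_0,\varphi_R(w_0)\in\CE$. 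The crucial structural step is then to show $\langle\CE,\CF\rangle=H$: this subgroup has finite index in $F$ (as established earlier in the paper), and because the commutator subgroup $[F,F]$ is infinite and simple it has no proper subgroup of finite index, whence every finite-index subgroup of $F$ contains $[F,F]$. Thus $[F,F]\subseteq\langle\CE,\CF\rangle\subseteq H$ together with $\pi(\langle\CE,\CF\rangle)=\pi(\CE)=\pi(H)$ forces $\langle\CE,\CF\rangle=H$.

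Granting this, it suffices to produce $\CF$ inside $\langle g,\CE\rangle$, or indeed any subgroup that contains $\CF$. Since $g\in F_+\setminus\CE$, Proposition \ref{propintersectionpos} tells us that $g$ is positive but not a power of $x_0$, so Lemma \ref{propposCE} applies to $g$ and places one of $M_0$, $M_1$, or $K_{(2,2)}$ inside $\langle g,\CE\rangle$. Each of these contains $\CF$: the subgroups $M_0,M_1$ by construction as overgroups of $\CF$, and $K_{(2,2)}$ because $\CF\subseteq K_{(2,2)}$. Hence $\langle g,\CE\rangle\supseteq\langle\CE,\CF\rangle=H$, which is the assertion.

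I expect the real work to sit inside Lemma \ref{propposCE}, that is, in the positive-element analysis adapted from the corresponding argument for $\CF$ in \cite{TV2}. Concretely, one takes the normal form $g=x_0^{a_0}\cdots x_n^{a_n}$ of a positive non-$x_0$-power and, by conjugating elements of $\CE$ by $g$ and invoking the shift homomorphisms $\varphi_R,\varphi_L$ of Lemma \ref{operationsonE}, manufactures enough new elements of $\langle g,\CE\rangle$ to exhaust $M_0$, $M_1$, or $K_{(2,2)}$. The delicate point, absent in the $\CF$-setting, is that $\CE\neq\CF$, so one must track the planar colorability of the conjugated tree diagrams and verify that the subgroup produced genuinely lies above $\CF$; the case analysis on the first and last carets of the reduced diagram of $g$, exactly as in the proof of Proposition \ref{propintersectionpos}, is what drives this.
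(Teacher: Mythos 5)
Your reduction has the right outer shape (produce $\CF$, or an overgroup of it, inside $\langle g,\CE\rangle$, then use $\langle\CE,\CF\rangle=H$), but the step that is supposed to do all the work is a misapplication of Lemma \ref{propposCE}. That lemma says that for positive $g$ the subgroup $\langle g,\CF\rangle$ contains $M_0$, $M_1$, or $K_{(2,2)}$; it says nothing about $\langle g,\CE\rangle$. The two are genuinely different: $\CF\not\subseteq\CE$ (if it were, then $\langle \CE,\CF\rangle=\CE$ would have finite index, contradicting part (4) of the index proposition), so there is no a priori containment of $\CF$, hence of $\langle g,\CF\rangle$ or of any of $M_0,M_1,K_{(2,2)}$, in $\langle g,\CE\rangle$. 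Getting from the $\CF$-world of Lemma \ref{propposCE} into the $\CE$-world is exactly the hard content of this proposition, and your proposal assumes it in one sentence. Note also that the difficulty does not sit "inside Lemma \ref{propposCE}" as your last paragraph suggests; that lemma is taken as given (adapted from \cite{TV2}), and the missing work lies entirely in the transfer.

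The paper's proof makes this transfer with the shift homomorphism: writing $g=x_0^{a_0}\varphi_R(h)$ with $h\in F_+$, so that $\langle g,\CE\rangle=\langle\varphi_R(h),\CE\rangle$ because $x_0\in\CE$, it uses $\varphi_R(\CF)\subseteq\CE$ (Lemma \ref{operationsonE}) to get $\varphi_R(\langle h,\CF\rangle)\subseteq\langle g,\CE\rangle$; Lemma \ref{propposCE} applied to $h$ then yields only the \emph{shifted} copies $\varphi_R(M_0)$, $\varphi_R(M_1)$, or $\varphi_R(K_{(2,2)})$ inside $\langle g,\CE\rangle$. Climbing from these up to $H$ is the bulk of the argument: conjugation by $x_0\in\CE$ together with maximality of $M_0,M_1$ in $K_{(2,2)}$ gives $\varphi_R(K_{(2,2)})$; then maximality of $\vec{F}$ in $K_{(1,2)}$, plus $x_1x_2=x_0(x_2x_3)x_0^{-1}$, gives $\varphi_R(K_{(1,2)})$; a normal-form computation gives $K_{(2,2)}\subseteq\langle x_0,\varphi_R(K_{(1,2)})\rangle$, and finally $\langle x_0, K_{(2,2)}\rangle=H$. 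None of this bootstrapping appears in your proposal. A secondary issue: your input that $\langle\CE,\CF\rangle$ has finite index (equivalently $=H$) is not "established earlier" in the paper; it is proved afterwards, in the $\tilde\CE$ discussion (Lemmas \ref{lemmatildeCE}, \ref{lemmaK22CE}). Using it would not be circular, and your derivation of $\langle\CE,\CF\rangle=H$ from finite index via simplicity of $[F,F]$ is correct, but in a self-contained proof that input must itself be proved — and in any case it does not repair the main gap.
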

\begin{proof}
By Proposition \ref{propintersectionpos} we know that $g$ is not a power of $x_0$.
Without loss of generality we may assume that $g\in\varphi_R(F_+)$, i.e. $g=\varphi_R(h)$ for some $h\in F_+$. Indeed, write $g$ in its normal form, 
i.e. $g=x_0^{a_0}\cdots x_n^{a_n}$, then $g':=x_0^{-a_0}g\in\varphi_R(F_+)$
and $\langle g, \CE\rangle =\langle g', \CE\rangle$.
We observe that, by Lemma \ref{operationsonE} we have
 $\varphi_R(\langle h, \CF\rangle)=\langle \varphi_R(h), \varphi_R(\CF)\rangle\subset \langle g, \CE\rangle$.

By Lemma \ref{propposCE} there are three cases to consider:  
$\langle h, \CF\rangle$ contains $M_0$, 
$\langle h, \CF\rangle$ contains  $M_1$,
and $\langle h, \CF\rangle$ contains $K_{(2,2)}$.
We will use the following facts from \cite{TV2}: $M_0$ and $M_1$ are maximal in $K_{(2,2)}$, $x_2^2\in M_0$, $x_3^2\in M_1$.

We first show that, in all three cases, the subgroup $\langle g,\CE\rangle$ contains $\varphi_R(K_{(2,2)})$.

In the first case, we know that, since $x_2^2\in M_0$ (\cite[Proposition 4.11]{TV2}),  we have
$\varphi_R(x_2^2)\in \langle g,\CE\rangle$. 
As $x_0\in \CE$, 
we also have
$$
x_0^{-1}x_3^2 x_0=x_0^{-1}\varphi_R(x_2^2) x_0=x_4^2=\varphi_R(x_3^2)\in\langle g,\CE\rangle
$$ 
Maximality of $M_0$ in $K_{(2,2)}$ %\cite[Theorem 4.6]{TV2})
 implies that 
$$
\langle \varphi_R(M_0), \varphi_R(x_3^2)\rangle = \varphi_R(\langle M_0, x_3^2\rangle)=\varphi_R(\langle K_{(2,2)}\rangle)\; .
$$ 

Similarly in the second case, 
since $x_3^2\in M_1$, %(\cite[Lemma 4.20]{TV2}), 
we have
$\varphi_R(x_3^2)\in \langle g,\CE\rangle$. 
As $x_0\in \CE$, 
we also have
$$
x_0^{-1}x_4^2 x_0=x_0^{-1}\varphi_R(x_3^2) x_0=x_5^2=\varphi_R(x_4^2)\in\langle g,\CE\rangle
$$
and by the maximality of $M_1$ in $K_{(2,2)}$ % \cite[Theorem 4.6]{TV2})
we have
$$
\langle \varphi_R(M_1), \varphi_R(x_4^2)\rangle = \varphi_R(\langle M_1, x_4^2\rangle)=\varphi_R(\langle K_{(2,2)}\rangle)\; .
$$ 

In the last case there is nothing to do.

From this point we can assume that $\varphi_R(K_{(2,2)})\subset \langle g,\CE\rangle$.
Recall from \cite[Lemma 4.6]{GS} that
$$\langle x_0x_1, x_1x_2, x_2x_3\rangle =\vec{F}$$ 
and that $\vec{F}$ is maximal in $K_{(1,2)}$ \cite[Theorem 3.12]{GS2}.

We know that $x_2x_3=\varphi_R(x_1x_2)$, $x_3x_4=\varphi_R(x_2x_3), x_2^2\in \varphi_R(K_{(2,2)})$ and that
 $x_1x_2=x_0(x_2x_3)x_0^{-1}\in \langle g,\CE\rangle$.
Then we have that 
$$
\varphi_R(K_{(1,2)})=\langle \varphi_R(\vec{F}),\varphi_R(x_1^2)\rangle\subset \langle g,\CE\rangle
$$
Now $\varphi_R(K_{(1,2)})$ consists of all the elements $y\in F$ whose normal form has even length and $a_0=b_0=0$.
The rectangular subgroup $K_{(2,2)}$ consists of the elements of $F$ whose normal form $x_0^{a_0}x_1^{a_1}\cdots x_n^{a_n}x_n^{-b_0}\cdots x_1^{-b_1}x_0^{-b_0}$
is such that: $a_0$ and $b_0$ have the same parity; $\sum_{i\geq 1} (a_i+b_i)$ is even. 
As $x_0\in \CE$, we have that any element of $K_{(2,2)}$ is contained in $\langle g,\CE\rangle$.
Finally, we get $\langle x_0, K_{(2,2)}\rangle = H$ because the index of $K_{(2,2)}$ in $H$ is $2$.
\end{proof}

 We would like to end this section discussing another interesting subgroup of $F$ defined similarly to $\CE$ that happens to coincide with the subgroup generated by $\CE$ and $\CF$.
Recall that   the definition of $\CF$ 
was based on the $3$-colorability of a strip partition (see the diagram in the middle in the figure below), 
 while for $\CE$ 
 we considered the $3$-colorability of a plane partition (as in the leftmost diagram in the figure below). 
It is natural to also  consider the following intermediate case: 
take the strip 
 passing through the roots of the two trees  and consider the group generated by the elements of $F$ for which this smaller strip is $3$-colorable. We denote this group by $\tilde{\CE}$.
\[
\begin{tikzpicture}[x=.35cm, y=.35cm,
    every edge/.style={
        draw,
      postaction={decorate,
                    decoration={markings}
                   }
        }
]

 \node at (-1.25,-3) {\;};

\draw[thick] (0,0) -- (2,2)--(4,0)--(2,-2)--(0,0);
 \draw[thick] (1,1) -- (2,0)--(3,-1);

 \draw[thick] (2,2)--(2,2.5);

 \draw[thick] (2,-2)--(2,-2.5);

\end{tikzpicture}
\qquad
\begin{tikzpicture}[x=.35cm, y=.35cm,
    every edge/.style={
        draw,
      postaction={decorate,
                    decoration={markings}
                   }
        }
]

 \draw[thick] (-1,2.5)--(5,2.5);
 \draw[thick] (-1,-2.5)--(5,-2.5);

 \node at (-1.25,-3) {\;};

\draw[thick] (0,0) -- (2,2)--(4,0)--(2,-2)--(0,0);
 \draw[thick] (1,1) -- (2,0)--(3,-1);

 \draw[thick] (2,2)--(2,2.5);

 \draw[thick] (2,-2)--(2,-2.5);

\end{tikzpicture}
\qquad
\begin{tikzpicture}[x=.35cm, y=.35cm,
    every edge/.style={
        draw,
      postaction={decorate,
                    decoration={markings}
                   }
        }
]

 \draw[thick] (-1,2)--(5,2);
 \draw[thick] (-1,-2)--(5,-2);

 \node at (-1.25,-3) {\;};

\draw[thick] (0,0) -- (2,2)--(4,0)--(2,-2)--(0,0);
 \draw[thick] (1,1) -- (2,0)--(3,-1);

 \draw[thick] (2,2)--(2,2.5);

 \draw[thick] (2,-2)--(2,-2.5);

\end{tikzpicture}
\]

We are now in a position to give a complete description of $\tilde\CE$.
\begin{proposition}
The subgroup $\tilde\CE$ coincides with the index $2$ subgroup of $F$
$$
H=\{f\in F\; | \; \log_2f'(0)\equiv_2 \log_2f'(1)		\}
$$
where $\equiv_2$ denotes equivalence modulo $2$.
\end{proposition}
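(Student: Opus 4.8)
The plan is to establish the two inclusions $\tilde\CE \subseteq H$ and $H \subseteq \tilde\CE$ separately. Both follow almost immediately from the structural results already in hand, so the argument is really an assembly of Lemmas \ref{lemmatildeCE} and \ref{lemmaK22CE} together with Proposition \ref{propintersectionK12}; the only genuine content is the bookkeeping with the abelianization $\pi$.

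For $\tilde\CE \subseteq H$, I would use Lemma \ref{lemmatildeCE} to write $\tilde\CE = \langle \CE, \CF\rangle$, so that it suffices to check $\CE \subseteq H$ and $\CF \subseteq H$. The first containment is precisely the contrapositive of Proposition \ref{propintersectionK12}: every $f \in \CE$ satisfies $\log_2 f'(0) \equiv_2 \log_2 f'(1)$, i.e.\ $f \in H$. For the second, I would observe that $H$ is the full preimage $\pi^{-1}\big(\{(a,b) \in \IZ \oplus \IZ : a \equiv_2 b\}\big)$, so it is enough to see that the images under $\pi$ of the generators $w_0, w_1, w_2, w_3$ of $\CF$ lie in $\{(a,b): a \equiv_2 b\}$. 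These were already recorded as $\pi(w_0) = (2,-2)$ and $\pi(w_1) = \pi(w_2) = \pi(w_3) = (0,-2)$, all with both coordinates even; hence $\pi(\CF) \subseteq 2\IZ \oplus 2\IZ \subseteq \pi(H)$ and therefore $\CF \subseteq H$.

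For the reverse inclusion $H \subseteq \tilde\CE$, I would combine Lemma \ref{lemmaK22CE}, which gives $K_{(2,2)} \subseteq \tilde\CE$, with the membership $x_0 \in \CE \subseteq \tilde\CE$. It then remains to identify $H$ with $\langle x_0, K_{(2,2)}\rangle$. Here $K_{(2,2)} = \pi^{-1}(2\IZ \oplus 2\IZ)$ sits inside $H$, and since $\ker \pi \subseteq K_{(2,2)}$ the index $[H : K_{(2,2)}]$ equals the index of $2\IZ \oplus 2\IZ$ in $\{(a,b): a \equiv_2 b\}$, which is $2$. As $\pi(x_0) = (1,-1)$ has both coordinates odd, $x_0 \in H \setminus K_{(2,2)}$ represents the nontrivial coset, so $H = K_{(2,2)} \sqcup x_0 K_{(2,2)} = \langle x_0, K_{(2,2)}\rangle \subseteq \tilde\CE$. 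Together with the first inclusion this yields $\tilde\CE = H$, and a final remark that $\pi(H)$ has index $2$ in $\IZ \oplus \IZ$ while $\ker \pi \subseteq H$ confirms that $H$ is indeed of index $2$ in $F$.

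I do not expect any real obstacle: the substantive work has been done in the two preceding lemmas, and the main thing to be careful about is keeping the computations with $\pi$ straight, in particular recognizing that the colour/strip condition defining $\tilde\CE$ translates, after Lemma \ref{lemmatildeCE}, exactly into the parity condition defining $H$.
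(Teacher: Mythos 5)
Your proposal is correct and follows essentially the same route as the paper: both directions rest on Lemma \ref{lemmatildeCE} (for $\tilde\CE\subseteq H$, via $\CE,\CF\subseteq H$) and on Lemma \ref{lemmaK22CE} together with $x_0\in\tilde\CE$ (for $H\subseteq\tilde\CE$), your coset decomposition $H=K_{(2,2)}\sqcup x_0K_{(2,2)}$ being just a repackaging of the paper's parity case analysis. You merely spell out details the paper calls ``clear,'' including the index-$2$ computation via $\pi$, which is a harmless (indeed welcome) addition.
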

\begin{proof}
%\begin{lemma}\label{lemmatildeCE}
We first prove that it holds $\tilde\CE= \langle \CE, \CF\rangle$.
%\end{lemma}
%\begin{proof}
By definition $\tilde \CE$
is generated by the elements of $F$ for which the strip  drawn as
in the rightmost diagram
 in the figure above is $3$-colorable.
There are two cases: the colors of the leftmost and rightmost regions are the same or not.
In the first case the element is in $\CE$, while in the second it is in $\CF$.
 The converse inclusion is obvious.

%\begin{lemma}\label{lemmaK22CE}
Then we prove that the subgroup $\tilde\CE$ contains the rectangular subgroup $K_{(2,2)}$.
%\end{lemma}
%\begin{proof}
By the previous %lemma 
argument we know that $\tilde\CE$ contains $\CF$ and $x_0^2$. 
In particular, $\tilde \CE$ contains the subgroup $M_0=\langle \CF, x_0^2\rangle$.
By \cite[Prop. 4.11]{TV2} $x_2^2$ is in $M_0\subset \tilde\CE$.
As $x_0\in\tilde\CE$, it follows that $x_1^2=x_0x_2^2x_0^{-1}\in \tilde\CE\cap K_{(2,2)}$.
By the maximality of $M_0$ in $K_{(2,2)}$,
\cite[Theorem 4.6]{TV}, we have that $\tilde\CE\cap K_{(2,2)}$ contains $K_{(2,2)}$.
%\end{proof}

 Recall that the map $\pi$ provides a bijection 
between finite index subgroups of $F$ and those of $\IZ\oplus \IZ$, \cite{BW}.
The subgroup $H$ is a finite index subgroup of $F$ because it can be described as $\pi^{-1}(\{(a,b)\in \IZ\oplus\IZ\; | \; a\equiv_2 b	\})$. 

By %Lemma \ref{lemmatildeCE} 
previous discussion, it is clear that $\tilde \CE$ is contained in $H$. 

For the converse inclusion, take $f\in H$. If $\log_2f'(0)\equiv_2 \log_2f'(1)\equiv_2 0$, then $f$ is in $K_{(2,2)}$ and %by Lemma \ref{lemmaK22CE}
 $f$ is also in $\tilde\CE$.
Otherwise, if $\log_2f'(0)\equiv_2 \log_2f'(1)\equiv_2 1$, consider $fx_0$. 
Now $\log_2(fx_0)'(0)\equiv_2 \log_2 (fx_0)'(1)\equiv_2 0$, so $fx_0\in \tilde\CE$ by the previous argument. 
Since $x_0\in\tilde\CE$, it holds $f\in\tilde\CE$ as well.
\end{proof}

\section{Quasi-regular representations and closed subgroups}\label{sec3}
 \label{sec3}
In the first half of this section we investigate how $\CE$ and its even part $\CEE$ act on the dyadic rationals.
In the second half we use this information to study the (ir)reducibility of quasi-regular representations associated with $\CEE$.
 
Similar analysis for the $3$-colorable subgroup $\CF$ can be found
 in the predecessor of this paper \cite{TV2}, 
  and for  the oriented subgroup $\vec{F}$ in an earlier paper 
by Golan and Sapir  \cite{GS}.
The oriented subgroup was realized as the stabiliser of the subset of dyadic rationals consisting of elements with an even number of digits  equal to $1$ in the binary expansion. 
The  $3$-colorable subgroup $\CF$ was described as  the intersection of the stabilizers of  three distinct subsets of dyadic rationals that were defined in terms of a suitable \emph{weight} function $\omega$.

For $\CEE$ we   introduce a new weight function that we use to define three subsets $Z_1$, $Z_2$, and $Z_3$ of the dyadic rationals and show that  $\CEE$ is  the intersection of the stabilisers of these subsets.
 An analogous study  
for the ternary oriented subgroup $\vec{F}_3$ of $F_3$ was performed in \cite{TV}. 

We  
first describe the action of $\CF$.
Let us define the subsets\footnote{
Note that these subsets were originally defined in \cite{TV2}, but there was an unnecessary condition added mistakenly on the parity of length of the binary expansion of the dyadic numbers.}
\begin{align*}
S_i&=\{t \in (0,1)\cap \IZ[1/2]\; | \; \omega(t)\equiv_3 i \} 
\end{align*}
where $i\in \IZ_3$,
  $\equiv_3$ is the equivalence modulo $3$, %$i\in\IZ_3$ and 
  and
  the weight function is defined by
$\omega(.a_1\ldots a_n):= \sum_{j=1}^n (-1)^j a_j$ (mod $3$). These subsets form a partition of the dyadic rationals.
\begin{lemma}\label{actionFonSi}
For all $i\in\IZ_3$, $\CF$ preserves
$S_i$   and acts transitively on each of them.
\end{lemma}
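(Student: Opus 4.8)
The plan is to prove the two assertions—invariance of the six sets and transitivity on each—separately, and in both cases to exploit Ren's isomorphism $\gamma\colon F_4\to\CF$ together with the colour bookkeeping already set up for $S_0$ in \cite{TV2}. Since $\CF=\langle w_0,w_1,w_2,w_3\rangle$, it suffices for invariance to test the generators $w_j^{\pm1}$, and for this it is convenient to read everything through $\gamma$: an element of $\CF$ is a pair $(T_+,T_-)$ of binary trees each obtained by stacking copies of the basic tree (the complete binary tree with four leaves), so every leaf of $T_\pm$ sits at even depth. I would first record the elementary dictionary that, grouping the binary digits of $t=.a_1\cdots a_n$ into consecutive pairs, the weight $\omega(t)=\sum_j(-1)^ja_j$ becomes $\sum_i\nu(c_i)$, where $c_i\in\{0,1,2,3\}$ is the $i$-th base-$4$ digit and $\nu(0)=\nu(3)=0$, $\nu(1)=1$, $\nu(2)=2$; simultaneously, $|t|$ is even exactly when the last base-$4$ digit of $t$ lies in $\{1,3\}$ and odd exactly when it equals $2$.

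For invariance I would argue as follows. The parity of $|t|$ is preserved because, by the previous paragraph, routing a string through $f=(T_+,T_-)\in\CF$ consumes a prefix of even length in $T_+$ and emits a prefix of even length from $T_-$, the common tail being carried over unchanged; hence $|f(t)|\equiv|t|\pmod2$, once one fixes the reduced (finite) expansion and keeps track of the tail at the endpoints in the usual way. For the residue $\omega(t)\bmod3$ I would extend the weight–colour correspondence of \cite{TV2}: in the canonical $3$-colouring of the plane partition, the pair $(\omega(t)\bmod3,\,|t|\bmod2)$ is a complete invariant for the ordered pair of colours of the two regions flanking the endpoint of the downward path of $t$, and the six values of this pair match the six sets $S_0,S_1,S_2,S_0',S_1',S_2'$. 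Because membership in $\CF$ means precisely that the plane partition is $3$-colourable, and because for elements of $\CF$ the colours at the two roots agree (Lemma~1.2 of \cite{TV2}), every $f\in\CF$ carries the canonical colouring of its domain to that of its range; it therefore preserves the flanking-colour pair of each point, hence both $\omega(t)\bmod3$ and $|t|\bmod2$. This gives $f(S_i)=S_i$ and $f(S_i')=S_i'$ for all $i\in\IZ_3$.

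For transitivity I would argue by induction on $|t|$, reducing every element of a given set to a fixed base-point: for instance $3/4\in S_0$, $1/4\in S_1$, $5/16\in S_2$ in the even case, and $1/2\in S_2'$ together with the obvious minimal representatives $3/8\in S_0'$, $5/8\in S_1'$ in the odd case. The inductive step is to show that for every non-minimal $t$ in a fixed set one of the generators $w_j^{\pm1}$ strictly decreases $|t|$; since the generators preserve the set by the invariance just proved, iterating lands us at the base-point, and invertibility of the group elements then yields that the whole set is a single $\CF$-orbit. Concretely I would translate this into the base-$4$ picture, where $F_4$ acts on $4$-adic expansions and one removes base-$4$ digits from the front by applying suitable $y_j^{\pm1}$, exactly as in the treatment of $S_0$ in \cite{TV2}; the parity bookkeeping then guarantees that even- and odd-length points never mix.

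The main obstacle is the inductive reduction step in the transitivity argument: one must verify that, for every point in a given class that is not already minimal, an appropriate generator genuinely shortens the expansion while keeping the point inside the class, which requires a short case analysis of how $w_0,\dots,w_3$ (equivalently $y_0,\dots,y_3$) act on the leading colours/digits near the root. A secondary, more bookkeeping-level point to get right is the endpoint convention: since a dyadic rational sits at the boundary of leaf intervals, one must fix once and for all which finite expansion (and hence which value of $|t|$ and $\omega(t)$) is used, and check that the colour-pair interpretation is insensitive to this choice, so that the formulas for $\omega$ and $|t|$ transform as claimed.
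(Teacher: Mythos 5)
Your invariance argument is essentially sound, and in fact goes beyond what the paper spells out (the paper leans on the weight--colour results of \cite{TV2} and devotes its entire proof to transitivity): the pairing of binary digits into base-$4$ digits, the parity criterion, and the identification of the flanking-colour pair with $(\omega(t)\bmod 3,\,|t|\bmod 2)$ all check out. The problem is the transitivity half, which is the real content of the lemma and which you explicitly defer as ``the main obstacle''. Worse, the inductive step you propose is false as stated. The minimal length of an element of $S_2$ is $4$, and $S_2$ contains \emph{two} elements of that length, namely $.0101=5/16$ and $.1011=11/16$ (both have even length and $\omega\equiv_3 2$). By the invariance you just proved, every element of $\CF$ maps $S_2$ into $S_2$, and $S_2$ contains nothing of length less than $4$; hence no generator $w_j^{\pm1}$ (indeed no element of $\CF$ whatsoever) can strictly decrease the length of $.1011$. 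Your induction therefore can never move $11/16$ toward the chosen base-point $5/16$, and the scheme must be supplemented by a separate, unproven argument connecting distinct minimal-length elements of the same class. That missing case analysis is exactly where the difficulty of the lemma lives, so the proposal does not establish transitivity.

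The paper's proof avoids any reduction-by-generators scheme and is a direct construction: given $.\alpha,.\beta$ in the same class, it takes two copies of $\Phi(T_h)$, where $T_h$ is the full $4$-ary tree of height $h=\max\{|.\alpha|,|.\beta|\}$, so that these binary trees have leaves labelled $\alpha 0^l$ and $\beta 0^m$. The key combinatorial fact is that in the full binary tree of height $2h$ leaves of equal weight have the same number of leaves to their left modulo $3$; since attaching a copy of the basic tree below the leftmost leaf shifts every later leaf's position by $3$, one can align the positions of the two distinguished leaves and then pad on the right to equalize the total leaf counts. The resulting pair of trees, being built entirely from basic trees, lies in $\CF=\gamma(F_4)$ and maps $.\alpha$ to $.\beta$ (left endpoints of leaf intervals go to left endpoints). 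This handles all pairs of points uniformly, including the pair $5/16$, $11/16$ that breaks your induction; either adopt such a construction or actually carry out (and repair) the case analysis your sketch postpones.
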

\begin{proof}
%First we recall from \cite[Lemma 2.6]{TV2} that, given a dyadic rational $t$, the parity of the length $|t|$ is preserved. 
From   \cite[Proposition 2.5]{TV2}, it follows that by acting on $t$ by an element $g$ in $\CF$ we obtain $g(t)$ with $\omega(g(t))\equiv_3\omega(t)$.
This means that $S_i$ are preserved by $\CF$.
%if $t$ is in $S_i$ (resp. $S_i'$), then $g(t)$ does not go out of it.
We now show that $\CF$ acts transitively on them.

Let $.\alpha, .\beta\in S_i$.  We want to find an element $g\in\CF$ mapping $.\alpha$ to $.\beta$ and we will exhibit a tree diagram representing it.
Let $h:=\max\{|.\alpha|, |.\beta|\}$
and assume that it is equal to $|. \alpha|$. Take two copies of the tree $\Phi(T_{h})$, where  $T_{h}$ is the full $4$-ary tree of height $h$, and call them
 $T'_+$ and $T'_-$. We are going to modify $T'_+$ and $T'_-$ in such a way that they map $.\alpha$ to $.\beta$ and, at the same time, it is clear that they are in the image of $\Phi$ (therefore, by \cite{Ren} the pair of trees belongs to $\CF$).
 
 Recall that $\Phi$ is the map  defined in Figure \ref{fig-ren-map-2}. These trees contain leaves whose corresponding words are $\alpha 0^l$ and $\beta 0^m$ for some  $l, m\in\IN_0$.
Number the leaves of $T'_\pm$ from left to right.  Recall that by hypothesis   $\omega(\alpha)=\omega(\beta)$.  
%It can be shown by

By induction it can be shown that, in the full  binary tree of height $2h$ ($h\geq 2$), the number of leaves to left of the leaves with the same weight is the same modulo $3$. 
Indeed, if we have the following colorings in the regions to the left and right of a leaf and we add a basic tree, the colors remain the same when the index number of the leaves are the same modulo $3$
\[
\begin{tikzpicture}[x=1.75cm, y=1.75cm,
    every edge/.style={
        draw,
      postaction={decorate,
                    decoration={markings}
                   }
        }
]

\draw[thick] (0.5,0.75)--(.5,0);%--(1,0)--(.5,-.5)--(0,0); 

 \node at (0.15,0) {$\scalebox{1}{$i$}$};
\node at (0.85,0) {$\scalebox{1}{$j$}$};

%\node at (0,-1.2) {$\;$};
\end{tikzpicture}
\qquad
\begin{tikzpicture}[x=1.75cm, y=1.75cm,
    every edge/.style={
        draw,
      postaction={decorate,
                    decoration={markings}
                   }
        }
]

\draw[thick] (0.5,0.75)--(.5,.5);%--(1,0)--(.5,-.5)--(0,0);
\draw[thick] (0.65,0)--(.75,.25);%--(1,0)--(.5,-.5)--(0,0);
\draw[thick] (0.35,0)--(.25,.25);%--(1,0)--(.5,-.5)--(0,0);
\draw[thick] (0,0)--(.5,.5)--(1,0);

\node at (-0.15,0) {$\scalebox{1}{$i$}$};
\node at (0.15,0) {$\scalebox{1}{$j$}$};
\node at (0.55,0) {$\scalebox{1}{$k$}$};
\node at (0.85,0) {$\scalebox{1}{$i$}$};
\node at (1.15,0) {$\scalebox{1}{$j$}$};

%\node at (0,-1.2) {$\;$};
\end{tikzpicture}
\]
In particular, the leaves  to the left of those corresponding to $\alpha 0^l$ and $\beta 0^m$ have the same weight modulo $3$. 
This means that after attaching copies of the basic tree below the leftmost leaf of $T'_+$ or $T'_-$ we have that the leaves corresponding to  $\alpha 0^l$ and $\beta 0^m$ 
can have the same number. We can use the same trick with right-most leaves to make sure that the two trees have the same number of leaves. Let $T_+$ and $T_-$ be the trees obtained 
from $T'_+$ and $T'_-$ with this procedure, respectively. The element $(T_+,T_-)$
is the element $g$ we were looking for.
\end{proof} 
 Thanks to the knowledge of the orbits of $\CF$ and of a generating set of $\CE$ we will derive the next result.
\begin{proposition}\label{actiontransitive}
For any $i\in\IZ_3$, set $S_{i,1}:=S_i\cap [0,1/2]$,  $S_{i,2}:=S_i\cap [1/2,1]$. 
The subgroup $\CE$ has two orbits on the dyadic rationals, namely $A:=S_{0,1}\cup S_{1,2}$ and $B:=S_{1,1}\cup S_{2,1}\cup S_{2,2}\cup S_{0,2}$.
Moreover, $\CE$ is a proper subgroup of Stab$(A)$ $\cap$ Stab$(B)$.
\end{proposition}
\begin{proof}
%First we show that $\CE$ acts transitively on the dyadic rationals in $[1/2,1]$. 
By Lemma \ref{actionFonSi} it follows that $\varphi_R(\CF)\subset \CE$  acts transitively on $S_{i,2}$  for all $i\in\IZ_3$.
Similarly, $\varphi_L(\CF)\subset \CE$  acts transitively on each $S_{i,1}$.
By Theorem \ref{genE} and the previous observation,  the only way to move from one of these subsets to another is to apply $x_0$.
Here, we describe how the weight changes when we apply $x_0$ on the (smallest) dyadic partition of its domain, namely on $[0,1/4]$, $[1/2,1/2]$, $[1/2,1]$
\begin{enumerate}
\item for $t=.00\alpha$, we have $\omega(x_0(t))=\omega(.0\alpha)=-\omega(t)$;
\item for $t=.01\alpha$, we have $\omega(x_0(t))=\omega(.10\alpha)=\omega(t)+1$: indeed, $\omega(t)=\omega(\alpha)+1$ and $\omega(x_0(t))=\omega(.10\alpha)=\omega(\alpha)-1$;
\item for $t=.1\alpha$, we have $\omega(x_0(t))=\omega(.11\alpha)=-\omega(t)-1$: indeed, $\omega(t)=-1-\omega(\alpha)$ and $\omega(x_0(t))=\omega(.11\alpha)=\omega(.\alpha)$
\end{enumerate}
Therefore, by (3) we have that: $x_0(S_{0,2})\subset S_{2,2}$, $x_0(S_{1,2})\subset S_{1,2}$, $x_0(S_{2,2})\subset S_{0,2}$.
By (1) and (2) we have that $x_0(S_{0,1}\cap [0,1/4])\subset S_{0,1}$, $x_0(S_{0,1}\cap [1/4,1/2])\subset S_{1,2}$,
 $x_0(S_{1,1}\cap [0,1/4])\subset S_{2,1}$, $x_0(S_{1,1}\cap [1/4,1/2])\subset S_{2,2}$,
  $x_0(S_{2,1}\cap [0,1/4])\subset S_{1,1}$, $x_0(S_{2,1}\cap [1/4,1/2])\subset S_{0,2}$ 
  and for the first statement we are done.
  
  We now prove that $\CE$ is a proper subgroup of Stab$(A)$ $\cap$ Stab$(B)$. 
  To this end, it suffices to show that 
  %$g:=x_0x_1 x_2^{-1}x_1^{-1}x_0^{-1}$
  $k:=x_1x_2^{-1}x_1^{-1}x_0^{-1}$ is in Stab$(A)$ $\cap$ Stab$(B)$, but not in $\CE$.
  This element acts as follows
  $$
k(t) = \left\{ \begin{array}{ll} 
.00\alpha & \text{if }  t= .0\alpha\\
.010\alpha & \text{if }  t= .100\alpha\\
.0110\alpha & \text{if }  t= .101\alpha\\
.0111\alpha & \text{if }  t= .110\alpha\\
.1\alpha & \text{if }  t= .111\alpha\\
\end{array}
\right. 
  $$
%  Clearly, on the first two standard dyadic intervals of the minimal standard dyadic partition of the domain of $g$ we see that the weight does not change (the function act as the identity there and preserves all the subsets $S_{i,j}$, for $i\in\{0,1,2\}$, $j\in\{1,2\}$).
  %We check what happens in the remaining three intervals.
  There are $5$ intervals in the minimal standard dyadic partition of the domain, namely $[0,1/2]$, 
  $[1/2,5/8]$,
$[5/8,3/4]$,
$[3/4,7/8]$,
$[7/8,1]$, 
 where the weight changes as follows
  \begin{enumerate}
\item[(4)] for $t=.0\alpha$, we have $\omega(k(t))=\omega(.\alpha)=-\omega(t)$;
\item[(5)] for $t=.100\alpha$, we have $\omega(k(t))=\omega(.010\alpha)=\omega(t)+2$: indeed, $\omega(t)=-\omega(\alpha)-1$ and $\omega(k(t))=-\omega(\alpha)+1$;
\item[(6)] for $t=.101\alpha$, we have $\omega(k(t))=\omega(.0110\alpha)=-\omega(t)+1$: indeed, $\omega(t)=-\omega(\alpha)-2$ and $\omega(k(t))=\omega(\alpha)$;
\item[(7)] for $t=.110\alpha$, we have $\omega(k(t))=\omega(.0111\alpha)=-\omega(t)+1$: indeed, $\omega(t)=-\omega(\alpha)$ and $\omega(k(t))=\omega(\alpha)+1$;
\item[(8)] for $t=.111\alpha$, we have $\omega(k(t))=\omega(.111\alpha)=\omega(t)$.
\end{enumerate}
  By (4) we have $k(S_{0,1}\cap [0,1/2])\subset S_{0,1}$, $k(S_{1,1}\cap [0,1/2])\subset S_{2,1}$, $k(S_{2,1}\cap [0,1/2])\subset S_{1,1}$, 
by (5) we have $k(S_{0,2}\cap [1/2,5/8])\subset S_{2,1}$, $k(S_{1,2}\cap [1/2,5/8])\subset S_{0,1}$, $k(S_{2,2}\cap [1/2,5/8])\subset S_{1,1}$,
by (6) we have $k(S_{0,2}\cap [5/8, 3/4])\subset S_{1,1}$,  $k(S_{1,2}\cap [5/8, 3/4])\subset S_{0,1}$, $k(S_{2,2}\cap [5/8, 3/4])\subset S_{2,1}$,
by (7) we have $k(S_{0,2}\cap [3/4,7/8])\subset S_{1,1}$,  $k(S_{1,2}\cap [3/4,7/8])\subset S_{0,1}$, $k(S_{2,2}\cap [3/4,7/8])\subset S_{2,2}$,
by (8) we have $k(S_{0,2}\cap [7/8,1])\subset S_{0,2}$,  $k(S_{1,2}\cap [7/8,1])\subset S_{1,2}$, $k(S_{2,2}\cap [7/8,1])\subset S_{2,2}$.
  \end{proof}
%%We want to describe the orbit structure of the action of  $\CE$.
%%The previous result forces us to look elsewhere for stabiliser subgroups.
%As we will see, %it suffices to 
%$\CEE$ (the even part of $\CE$, Definition \ref{defCEE})
%%a \emph{cut} 
%%the 
%%an index 2-subgroup of the planar $3$-colorable subgroup 
%admits a description in terms of stabiliser subgroups. 
%%This is the subgroup $\CEE$ that we call the even part of $\CE$.
%
%   %defCEE
   
   We now work towards a description of $\CEE$ (the even part of $\CE$, Definition \ref{defCEE}) in terms of stabiliser subgroups. 
Consider a   rooted binary planar tree $T$. We draw it in the upper-half plane with its leaves on the $x$-axis   and the root on the line $y=1$.
Given a vertex of  a tree, there exists a unique minimal path from the root of the tree to the vertex. This path is made by a collection of left and right edges, and may be represented by a word  in the letters $\{0,1\}$ ($0$ stands for a left edge, $1$ for a right edge). 
An easy inductive argument on the number of vertices shows that
%the strip bounded by the line $y=1$ and the $x$-axis 
the upper-half plane is always $3$-colorable. 
We adopt the same convention as 
%in \eqref{convention-colors}
in Convention \ref{conventionCE}, that is, the region above of the root is colored with $0$, the region 
below the root is coloured
 with $1$. After this choice, there is a unique coloring of the strip.

Given a vertex $v$ of $T$, we denote by $\tilde\omega(v)$ the color of the region to the left of $v$. 
We call $\tilde\omega(\cdot)$ the weight associated with $T$.
The weight $\tilde\omega$ can be actually defined on the infinite binary rooted planar tree 
$\CT_2$.
Any rooted planar binary tree can be seen as rooted sub-tree of $\CT_2$ and  the restriction of the weight of $\CT_2$ to the vertices of $T$ is then the weight of $\CT_2$. 
 Vertices of $\CT_2$ can be viewed as finite binary words $\{0, 1\}^*$.
Therefore, %if we denote by $\{0, 1\}^*$ the set of finite binary words, %(that is, the letters are $\{0,1\}$), 
we have a function $\tilde\omega: \{0, 1\}^*\to \IZ_3$.
When we consider a tree diagram $(T_+,T_-)$, we denote by $\tilde\omega_+(\cdot)$ and $\tilde\omega_-(\cdot)$ the weights associated with the top tree and the bottom tree (after a reflection about the $x$-axis), respectively.

The next  lemma 
presents some properties that allow one to calculate the weight function.
 \begin{lemma}\label{lemma1} \label{lemmaweightsprop}
For all $\alpha$, $\beta\in \{0, 1\}^*$, and $n\in\IN$,
it holds 
\begin{align}
&\tilde\omega (0\alpha)=\omega(\alpha) \label{formula1A}\\
&\tilde\omega(1\alpha)=\omega(101\alpha)=1-\omega(\alpha) \label{formula1B}\\
& \tilde\omega(\alpha 00 \beta)=\tilde\omega(\alpha \beta) \label{formula2}\\
& \tilde\omega(\alpha 11 \beta)=\tilde\omega(\alpha \beta)\label{formula3}\\
& \tilde\omega(\alpha 0)=\tilde\omega(\alpha) \label{formula4}\\
\label{formula1}
& \tilde\omega((10)^n)=\left\{ \begin{array}{cc} 
1 & \text{if }  n= 1+3k\\
0 & \text{if }  n= 2+3k\\
2 & \text{if } n= 3+3k
\end{array}
\right. 
\qquad 
\tilde\omega((01)^n)=\left\{ \begin{array}{cc} 
 2 & \text{if }  n= 1+3k\\
1 & \text{if }  n= 2+3k\\
0 & \text{if } n= 3+3k 
\end{array}
\right. 
\end{align}
where $k\geq0$ and, for a finite word $w$ in $0$ and $1$, $w^n$ denotes the word obtained by concatenating $n$ copies of $w$.  
\end{lemma}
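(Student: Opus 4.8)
The plan is to set up a finite-state ``recolouring'' dynamics on the infinite tree $\CT_2$ and to read off all five identities from it. For a vertex $v$ I will track not just $\tilde\omega(v)$ (the colour immediately to the left of $v$) but the whole ordered triple $c(v)=(\ell(v),b(v),r(v))\in\IZ_3^3$ of colours meeting at $v$ — to the left, below, and to the right — which in a proper $3$-colouring is always a permutation of $\{0,1,2\}$. The root $\epsilon$ is special: drawn in the upper half-plane with no upward edge it is a vertex of degree $2$, the unbounded outer region (to which both its sides belong) carries colour $0$ and the region below it carries colour $1$. Every other vertex has degree $3$, and there properness forces purely local propagation as one descends. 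Passing to the left child fixes the left colour and exchanges the below and right colours; passing to the right child exchanges the left and below colours and fixes the right colour. In symbols, a step ``$0$'' acts on $(\ell,b,r)$ by the transposition of the last two entries and a step ``$1$'' by the transposition of the first two. The base data are the two children of the root, $c(0)=(0,2,1)$ and $c(1)=(1,2,0)$; the asymmetry between these two triples — exactly what a reflection in the vertical axis produces — is the source of the asymmetry between \eqref{formula1A} and \eqref{formula1B}.

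Granting these rules, the reduction identities are immediate. A left step fixes the first coordinate, which is \eqref{formula4}; and ``$00$'' and ``$11$'', being a transposition applied twice, each act as the identity on the whole triple, so inserting them along a path does not change the terminal triple, giving \eqref{formula2} and \eqref{formula3}. The only point to watch is that these reductions act at degree-$3$ vertices, the degree-$2$ root being handled separately as the base case of the induction below.

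For the two main formulas I will prove, by induction on $|\gamma|$ (peeling the last letter), the closed forms $c(0\gamma)=\bigl(\omega(\gamma),\,\omega(\gamma)-(-1)^{|\gamma|},\,\omega(\gamma)+(-1)^{|\gamma|}\bigr)$ and $c(1\gamma)=\bigl(1-\omega(\gamma),\,1-\omega(\gamma)+(-1)^{|\gamma|},\,1-\omega(\gamma)-(-1)^{|\gamma|}\bigr)$, read modulo $3$, with base cases $c(0)=(0,2,1)$ and $c(1)=(1,2,0)$. The inductive step uses only the two transposition rules together with the elementary recursions $\omega(\gamma 0)=\omega(\gamma)$ and $\omega(\gamma 1)=\omega(\gamma)-(-1)^{|\gamma|}$; matching coordinates after a step ``$0$'' or ``$1$'' is then a one-line check. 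Reading off the first coordinate yields \eqref{formula1A} and \eqref{formula1B}, and the middle expression $\omega(101\alpha)=1-\omega(\alpha)$ in \eqref{formula1B} is the elementary prepend identity $\omega(c\delta)=-c-\omega(\delta)$ applied three times.

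Finally, \eqref{formula1} follows by specialising the two main formulas to the alternating words. Writing $(10)^n=1\,(01)^{n-1}0$ and $(01)^n=0\,(10)^{n-1}1$, identities \eqref{formula1B} and \eqref{formula1A} give $\tilde\omega((10)^n)=1-\omega((01)^{n-1})$ (the trailing $0$ drops) and $\tilde\omega((01)^n)=\omega((10)^{n-1}1)$; since all the $1$'s of $(01)^{n-1}$ sit in even positions and all the $1$'s of $(10)^{n-1}1$ in odd positions, the two weights equal $n-1$ and $-n$ modulo $3$, which reproduce the tabulated values. The step I expect to demand the most care is the first one: pinning down the local recolouring rules and, above all, the correct base triples at the two children of the root. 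It is precisely the degree-$2$ versus degree-$3$ dichotomy at the root — the outer region wrapping around in the half-plane — that breaks the left–right symmetry and produces the ``$1-$'' twist on the right branch; once the rules and base cases are correct, the remainder is bookkeeping.
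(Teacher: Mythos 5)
Your proposal is correct, but it takes a genuinely different route from the paper's. The paper proves only \eqref{formula1A} and \eqref{formula1B}, by a picture-matching argument: in the planar ($\CE$-convention) colouring, the left child of the root sees exactly the configuration $(\text{left},\text{below},\text{right})=(0,2,1)$ of the root in the strip ($\CF$-convention) colouring, and the right child sees the configuration of the vertex $101$ in the $\CF$-colouring; this identifies $\tilde\omega(0\alpha)=\omega(\alpha)$ and $\tilde\omega(1\alpha)=\omega(101\alpha)$, after which \eqref{formula2}--\eqref{formula1} are imported wholesale from the corresponding properties of $\omega$ in \cite[Lemmas 2.2, 2.3]{TV2}. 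You instead build a self-contained finite-state argument: the colour triples, the two transposition rules (which I verified are the correct local propagation rules), the base triples $c(0)=(0,2,1)$, $c(1)=(1,2,0)$ (also correct), and the closed forms for $c(0\gamma)$ and $c(1\gamma)$, whose inductive step checks out modulo $3$ (the only wrinkle being $-2\equiv 1$ in the third coordinate after a step ``$1$''). Your route buys independence from \cite{TV2} and strictly more information --- the whole triple rather than just its first entry --- and it makes \eqref{formula2}, \eqref{formula3}, \eqref{formula4} transparent as insertions of involutions; the cost is re-deriving elementary facts about $\omega$ that the paper simply cites. One shared caveat worth recording: both arguments establish \eqref{formula2} and \eqref{formula3} only for non-empty $\alpha$, and indeed the identities fail for empty $\alpha$ (for instance $\tilde\omega(0011)=\omega(011)=0$ while $\tilde\omega(11)=1-\omega(1)=2$), because the root transition is not one of your transpositions --- equivalently, because $\tilde\omega$ and $\omega$ differ on words beginning with $1$. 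Your explicit flag of the degree-$2$ root is therefore apt; the paper leaves this restriction implicit.
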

\begin{proof}
It suffices to prove Formulas \ref{formula1A} and \ref{formula1B}, then the others follow from  \cite[Lemma 2.2, Lemma 2.3]{TV2}.
The first one follows from the coloring the  trees of the standard dyadic intervals according to the conventions for $\CE$ (on the left) and for $\CF$ (on the right).
\[
\begin{tikzpicture}[x=.75cm, y=.75cm,
    every edge/.style={
        draw,
      postaction={decorate,
                    decoration={markings}
                   }
        }
]
\draw[thick] (0,0) -- (1.5,1.5)--(3,0);
 \draw[white] (1.5,1.5) -- (1.5,2);
 \draw[thick] (-1,-1)--(0,0)--(1,-1);

% \draw[thick] (2,-1)--(3,0)--(4,-1);
 
%  \fill (1,2)  circle[radius=1.5pt];

\node at (0,1.5) {$0$};
%\node at (3,1.5) {$0$};
\node at (1.5,0.25) {$1$};
\node at (0,-.75) {$2$};
%\node at (3,-.75) {$2$};

\node at (1,-5) {$\;$};

\end{tikzpicture}
\qquad\qquad
\begin{tikzpicture}[x=.75cm, y=.75cm,
    every edge/.style={
        draw,
      postaction={decorate,
                    decoration={markings}
                   }
        }
]
\draw[thick] (0,0) -- (1.5,1.5)--(3,0);
 \draw[thick] (1.5,1.5) -- (1.5,2);
% \draw[thick] (-1,-1)--(0,0)--(1,-1);
%
% \draw[thick] (2,-1)--(3,0)--(4,-1);
 
%  \fill (1,2)  circle[radius=1.5pt];

\node at (0,1.5) {$0$};
\node at (3,1.5) {$1$};
\node at (1.5,.5) {$2$};
%\node at (0,-.75) {$i$};
%\node at (3,-.75) {$2$};

\node at (1,-5) {$\;$};

\end{tikzpicture}
\]
The second  follows by drawing the trees from $\CE$ (on the left) and from $\CF$ (on the right).
\[
\begin{tikzpicture}[x=.75cm, y=.75cm,
    every edge/.style={
        draw,
      postaction={decorate,
                    decoration={markings}
                   }
        }
]
\draw[thick] (0,0) -- (1.5,1.5)--(3,0);
\draw[white] (1.5,1.5) -- (1.5,2);
% \draw[thick] (-1,-1)--(0,0)--(1,-1);

 \draw[thick] (2,-1)--(3,0)--(4,-1);
 
%  \fill (1,2)  circle[radius=1.5pt];

\node at (0,1.5) {$0$};
%\node at (3,1.5) {$0$};
\node at (1.5,0) {$1$};
%\node at (0,-.75) {$i$};
\node at (3,-.75) {$2$};

\node at (1,-5) {$\;$};

\end{tikzpicture}
\qquad
\qquad
\begin{tikzpicture}[x=.75cm, y=.75cm,
    every edge/.style={
        draw,
      postaction={decorate,
                    decoration={markings}
                   }
        }
]
\draw[thick] (0,0) -- (1,1)--(2,0);
 \draw[thick] (1,1) -- (1,1.5);
 \draw[thick] (1,-1)--(2,0)--(3,-1);
 \draw[thick] (0,-2)--(1,-1)--(2,-2);
 \draw[thick] (1,-3)--(2,-2)--(3,-3);
 
%  \fill (1,2)  circle[radius=1.5pt];
%  \fill (1,-1)  circle[radius=1.5pt];
%  \fill (1,-3)  circle[radius=1.5pt];

\node at (0,1) {$0$};
\node at (2,1) {$1$};
\node at (1,0.2) {$2$};
\node at (2,-.75) {$0$};
 
\node at (1,-1.75) {$1$};
\node at (2,-2.75) {$2$};

\node at (1,-5) {$\;$};

\end{tikzpicture}
\]
\end{proof}
 \begin{proposition}\label{prop-descr-3col}
It holds
$$
\CEE=\CE\cap K_{(2,2)}=\{(T_+,T_-)\in K_{(2,2)}\; |\; \tilde\omega_+(i)=\tilde\omega_-(i) \; \forall i\geq 0\}
$$
where $\tilde\omega_+(i)$ and $\tilde\omega_-(i)$  
stand for the colors associated with the $i$-th leaf of the trees $T_+$ and $T_-$, respectively.
\end{proposition}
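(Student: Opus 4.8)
The plan is to prove the only nontrivial equality, namely that for $(T_+,T_-)\in K_{(2,2)}$ one has $(T_+,T_-)\in\CE$ if and only if $\tilde\omega_+(i)=\tilde\omega_-(i)$ for all leaves $i$; the first equality is merely Definition \ref{defCEE}. I would work throughout with the plane partition obtained by gluing the top tree $T_+$ and the reflected bottom tree $T_-$ along their common leaves on the $x$-axis, with the root edges removed as in the definition of $\CE$. The starting observation is that, once we fix the two colours near the top root as in Convention \ref{conventionCE} (the unbounded region above the root is $0$, the region between the two root edges is $1$), a proper $3$-colouring of the plane partition, if it exists, is unique. Since $\tilde\omega_+$ is by definition the unique $3$-colouring of the top strip with exactly these root colours, the restriction to the top half of any global colouring coincides with $\tilde\omega_+$; in particular the colour of the region immediately to the left of the $i$-th leaf, read from above, is $\tilde\omega_+(i)$.

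The geometric heart of the argument is that, because the $x$-axis is \emph{not} a boundary of the partition, the region lying immediately to the left of the $i$-th leaf is one and the same region whether it is approached from the top or from the bottom tree: the only edges incident to a leaf run upward into $T_+$ and downward into $T_-$, so consecutive leaves are separated by a single region straddling the axis, while the region to the left of the first leaf, above the top root, below the bottom root, and to the right of the last leaf forms a single unbounded outer region. Consequently, in any global colouring the colour of the region left of leaf $i$ read from below must also equal the colour read from above.

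It then remains to identify the bottom restriction of the global colouring with $\tilde\omega_-$, and this is exactly where I would use the hypothesis $(T_+,T_-)\in K_{(2,2)}$. By the definition of $\tilde\omega_-$ (reflect $T_-$ into the upper half-plane and apply the same root convention), $\tilde\omega_-$ is the colouring of the bottom tree in which the outer region below the bottom root is $0$ and the inner region above it is $1$. A proper colouring of the bottom tree with outer region $0$ is determined by whether this inner region is $1$ or $2$, and by Lemma \ref{lemmarootcol} membership in $K_{(2,2)}$, i.e. $\log_2 f'(0),\log_2 f'(1)\in 2\IZ$, forces the inner-bottom region of the global colouring to be $1$; by uniqueness the bottom restriction therefore equals $\tilde\omega_-$. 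This is precisely why the characterisation must be restricted to the even part: for an odd element of $\CE$ such as $x_0$ the inner-bottom region is forced to be $2$, and the two weights differ by the swap $1\leftrightarrow 2$.

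Putting these pieces together yields both inclusions. For the forward inclusion, if $(T_+,T_-)\in\CEE$ the global colouring exists, its top and bottom restrictions are $\tilde\omega_+$ and $\tilde\omega_-$, and the shared left-of-leaf regions force $\tilde\omega_+(i)=\tilde\omega_-(i)$ for all $i$. For the converse, I would assume $(T_+,T_-)\in K_{(2,2)}$ with $\tilde\omega_+(i)=\tilde\omega_-(i)$ for all $i$ and colour the top half by $\tilde\omega_+$ and the bottom half by $\tilde\omega_-$; each half is properly coloured, and across the axis the only adjacencies occur through the shared regions, which receive matching colours by hypothesis on the finite regions left of each leaf and the common value $0$ on the outer region. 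Hence the partition is $3$-colourable, so $(T_+,T_-)\in\CE\cap K_{(2,2)}=\CEE$. The step I expect to require the most care is pinning down that the bottom restriction of the global colouring is genuinely $\tilde\omega_-$ rather than its $1\leftrightarrow 2$ variant, through Lemma \ref{lemmarootcol}, together with the bookkeeping that the finitely many shared regions left of the leaves plus the single outer region exhaust all adjacencies across the $x$-axis.
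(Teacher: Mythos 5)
Your proof is correct and follows essentially the same route as the paper, whose entire proof is the one-line observation that the claim follows from Lemma \ref{lemmarootcol} and the definition of $\tilde\omega$: you invoke exactly these two ingredients (the parity constraint pinning the bottom-root colour to $1$, and uniqueness of the colourings under the root convention), merely spelling out the gluing of the two strip colourings across the $x$-axis that the paper leaves implicit. The extra care you take in identifying the shared left-of-leaf regions and in checking both directions is a faithful expansion of the intended argument, not a different one.
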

 \begin{proof}
 The claim follows from Lemma \ref{lemmarootcol} and from the definition of $\tilde\omega$.
 \end{proof}

For $i\in\IZ_3$, consider the subsets $Z_i$ of the dyadic rationals consisting of the numbers whose corresponding binary word has   weight $i$, namely
$$
Z_i:=\{t\in (0,1)\cap \IZ[1/2]\; | \; \tilde\omega(t)= i\} \qquad i\in \IZ_3\; 
$$
%where $|t|$ is the length of $t$ in its binary expansion, i.e., $|.a_1\ldots a_n|=n$ if $a_n\neq 0$. 
\begin{theorem}\label{lemma-inclu}
For any $j\in\IZ_3$, the even part $\CEE$ of the planar $3$-colorable subgroup $\CE$ coincides with 
$\cap_{i\in\IZ_3}{\rm Stab}(Z_i)=\cap_{i\in\IZ_3, i\neq j}{\rm Stab}(Z_i)$.
\end{theorem}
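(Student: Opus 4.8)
The plan is to collapse the statement to a single weight-preservation condition and then match that condition with the description of $\CEE$ obtained in Proposition \ref{prop-descr-3col}. First I would remove the dependence on $j$. Every $g\in F$ permutes the dyadic rationals of $(0,1)$ bijectively, and the three sets $Z_0,Z_1,Z_2$ partition $(0,1)\cap\IZ[1/2]$; hence an element stabilising two of them automatically stabilises the third, being the complement of the union of the other two. Thus $\bigcap_{i\neq j}{\rm Stab}(Z_i)$ is independent of $j$ and equals ${\rm Stab}(Z_0)\cap{\rm Stab}(Z_1)\cap{\rm Stab}(Z_2)$. Since $t\in Z_i$ precisely when $\tilde\omega(t)=i$, this triple intersection is exactly the set of $g\in F$ satisfying $\tilde\omega(g(t))=\tilde\omega(t)$ for every dyadic $t$, so it remains to identify the weight-preserving elements with $\CEE$.

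The technical heart is a transfer (state-propagation) description of $\tilde\omega$. Given $g=(T_+,T_-)$, after refining by opposing carets I may write a chosen dyadic $t$ as $t=.u\gamma$, where $u$ is the word of the $i$-th leaf of $T_+$ and $v$ is the word of the corresponding $i$-th leaf of $T_-$; then $g(t)=.v\gamma$. I would encode the colouring locally at each vertex by the pair (colour to its left, colour to its right), which propagates from the root datum $(0,2)$ by the two rules: a left child keeps the left colour, a right child takes the remaining (middle) colour, the three colours at each caret summing to $0$ in $\IZ_3$. One checks these rules reproduce the identities of Lemma \ref{lemmaweightsprop}. The decisive observation is that $\tilde\omega(u\gamma)$ depends only on this state at $u$ and on $\gamma$, while conversely the function $\gamma\mapsto\tilde\omega(u\gamma)$ determines the state at $u$ (evaluate at $\gamma=\epsilon$ and at $\gamma=1$). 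Consequently $\tilde\omega(u\gamma)=\tilde\omega(v\gamma)$ holds for all $\gamma$ if and only if the states at the leaves $u$ and $v$ coincide, and therefore $g$ is weight-preserving if and only if the leaf-states of $T_+$ and $T_-$ agree at every leaf.

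It then remains to relate leaf-states to the leaf-weights appearing in Proposition \ref{prop-descr-3col}. The colour to the right of the $i$-th leaf is the colour to the left of the $(i{+}1)$-st leaf, namely $\tilde\omega_{\pm}(i{+}1)$, while a direct computation along the all-right branch shows that the colour to the right of the last leaf is always $2$. Hence agreement of all leaf-states is equivalent to $\tilde\omega_+(i)=\tilde\omega_-(i)$ for every $i$. Combining this with Proposition \ref{prop-descr-3col} and Lemma \ref{lemmarootcol}, the condition $\tilde\omega_+=\tilde\omega_-$ already forces $g\in\CE$ (the two standard colourings glue across the $x$-axis) and forces $\log_2 g'(0)\in2\IZ$ (the bottom-root colours are then the standard ones, so the derivative is even), whence $g\in\CE\cap K_{(2,2)}=\CEE$; the reverse implication is immediate from Proposition \ref{prop-descr-3col}.

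The main obstacle I anticipate is the second step: stating and verifying the transfer rule rigorously, in particular keeping careful track of the reflection used to colour $T_-$ and of the rightmost region, and confirming that the global colour-permutation relating the top and bottom colourings is trivial exactly on $K_{(2,2)}$ and not merely on $\CE$. Once that bookkeeping is in place, Steps 1 and 3 are routine, and the equivalence $\CEE=\bigcap_{i\in\IZ_3}{\rm Stab}(Z_i)$ follows.
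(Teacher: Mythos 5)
Your Step 1 and the overall reduction to Proposition \ref{prop-descr-3col} are fine, but the transfer rule at the ``technical heart'' of your argument is initialised incorrectly, and this is a genuine error rather than bookkeeping. The weight $\tilde\omega$ is defined using Convention \ref{conventionCE}, i.e.\ for the partition in which the edge above the root is removed; consequently the outer region is a \emph{single} region, coloured $0$ on both sides of the root, and the correct root datum is $(L,R)=(0,0)$ with the region below the root decreed to be $1$. Your datum $(0,2)$ amounts to keeping the root edge and giving the two halves of the outer region different colours. The function it propagates is not the paper's $\tilde\omega$: starting from $(0,2)$ one gets $\tilde\omega(1\alpha)=\omega(\alpha)+1$, e.g.\ $\tilde\omega(11)=0$ and $\tilde\omega(1010)=2$, whereas \eqref{formula1B} and \eqref{formula1} of Lemma \ref{lemmaweightsprop} give $\tilde\omega(1\alpha)=1-\omega(\alpha)$, hence $\tilde\omega(11)=2$ and $\tilde\omega((10)^2)=0$. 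So your claim that the rules ``reproduce the identities of Lemma \ref{lemmaweightsprop}'' fails, and your sets $Z_i$ are not the paper's. This is not harmless: with your weights the statement itself becomes false. For $x_0^2\in\CEE$ your rule gives top leaf colours $(0,1,2,1)$ (leaves $000,001,01,1$) but bottom leaf colours $(0,1,0,1)$ (leaves $0,10,110,111$); correspondingly $x_0^2$ sends $.011$ (your weight $0$) to $.1101$ (your weight $2$), so $x_0^2$ would not lie in ${\rm Stab}(Z_0)$. The same slip makes ``the colour to the right of the last leaf is always $2$'' wrong (it is $0$), and it undercuts your own Step 3: the plane partition defining $\CE$ has no root edges, so a colouring assigning $0$ to the far left of the trees and $2$ to their far right cannot glue to a colouring of that partition.

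The error is confined to the initialisation, though. With root datum $(0,0)$ and below-root colour $1$, every non-root vertex has distinct left/right colours, your propagation is deterministic, the state at a leaf $u$ is recovered from $\tilde\omega(u)$ and $\tilde\omega(u1)$ exactly as you say, and the colour to the right of the last leaf is $0$ for both trees, so ``weight-preserving $\iff$ all matching leaf colours agree'' survives, and Proposition \ref{prop-descr-3col}, Lemma \ref{lemmarootcol} and the gluing argument finish the proof. After this fix your argument is essentially the paper's, organised more symmetrically: the paper proves the inclusion $\CEE\subset\bigcap_{i}{\rm Stab}(Z_i)$ generator by generator (using Theorem \ref{genE1}), avoiding any transfer formalism by adding opposing carets until the chosen dyadic $t$ is itself a leaf of the refined diagram, and it obtains the reverse inclusion directly from Proposition \ref{prop-descr-3col}; your state-propagation lemma would handle both inclusions at once.
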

\begin{proof}
First we show that $\CEE=\cap_{i\in\IZ_3}{\rm Stab}(Z_i)$.
The inclusion $\CEE\subset\cap_{i\in\IZ_3}{\rm Stab}(Z_i)$ follows if we   check that the generators of $\CEE$ (namely, 
$\varphi_R(w_0)=\varphi_R(x_0^2x_1x_2^{-1})$,
$\varphi_R(w_1)=\varphi_R(x_0x_1^2x_0^{-1})$,
$\varphi_R(w_2)=\varphi_R(x_1^2x_3x_2^{-1})$,
$\varphi_R(w_3)=\varphi_R(x_2^2x_3x_4^{-1})$,
$x_0^2$)
 preserve the sets $Z_i$ for all $i\in\IZ_3$.
 
 Take $t\in Z_i$ and write it in binary expansion as $t=.a_1\ldots a_n$. 
 The idea of the proof is the following:
 we find 
 tree diagrams representing the generators of $\CEE$ in such a way that each top tree has a leaf whose corresponding binary word is 
 $a_1\ldots a_n$ or $a_1\ldots a_n 0^k$ (for some non-negative integer $k$). This is achieved by adding pairs of opposing carets.
 Because we are considering elements in $\CEE$,
it would then follow that 
  the weight $\tilde\omega(a_1\ldots a_n)=\tilde\omega(a_1\ldots a_n 0^k)$ is equal to 
  %$\tilde\omega(g(t))$, 
  the weight of the image of $t$ under them,
  thus 
  the generators of $\CEE$ are also in ${\rm Stab}(Z_i)$.
 
 So   replace each leaf in both the top and bottom trees
 of the generators of $\CEE$
 by a complete binary tree of height $n$ (that is, a tree with $2^n$ leaves, where each leaf has distance $n$ from the root).
 This is done by gluing this tree by its root to the leaf of the top/bottom tree.
 This does not affect the generators, as the 
 new tree diagrams can be reduced to the previous form
by cancelling pairs of opposing carets.  
 Below we exemplify the procedure when $n=2$.
 We draw  a
  leaf from the top tree and a leaf of the bottom tree glued together
  (thus with the two incident edges) and the transformation to be used.
 \[
\begin{tikzpicture}[x=.35cm, y=.35cm,
    every edge/.style={
        draw,
      postaction={decorate,
                    decoration={markings}
                   }
        }
]

 \node at (-1.25,-3) {\;};

 \draw[thick] (2,2.75)--(2,-2.75);

 \fill (2,0)  circle[radius=1.5pt];

\end{tikzpicture}
\quad
\begin{tikzpicture}[x=.35cm, y=.35cm,
    every edge/.style={
        draw,
      postaction={decorate,
                    decoration={markings}
                   }
        }
]
 
 \fill (0,0)  circle[radius=1.5pt];
 \fill (2,0)  circle[radius=1.5pt];
 \fill (3,0)  circle[radius=1.5pt];
 \fill (5,0)  circle[radius=1.5pt];

 \node at (-1.25,-3) {\;};

\draw[thick] (0,0) -- (2.5,2.5)--(5,0)--(2.5,-2.5)--(0,0);
 \draw[thick] (1,1) -- (2,0)--(1,-1);
 \draw[thick] (4,1) -- (3,0)--(4,-1);

 \draw[thick] (2.5,2.5)--(2.5,2.75);

 \draw[thick] (2.5,-2.5)--(2.5,-2.75);

\node at (-2,0) {$\mapsto$};
\end{tikzpicture}
 \]

 Now, the matching  leaves of all  new tree diagrams representing the generators of $\CE$ have the same color.
In each of these tree diagrams there is a leaf of the top tree whose corresponding label is $a_1\ldots a_n$ or $a_1\ldots a_n0^k$. % (for some $k\in\IN$).
It follows that $t$ is mapped to another number with the same weight.
 
For the converse inclusion, let $f=(T_+,T_-)$ be an element of $\cap_{i\in\IZ_3}\stab(Z_i)$.
Denote by  $\alpha_+(k)$ and $\alpha_-(k)$ the words associated with the $k$-th leaf of the top and bottom trees, respectively.
Since $f\in \cap_{i\in\IZ_3}\stab(Z_i)$, it follows that $\tilde\omega_+(\alpha_+(k))\equiv_3 \tilde\omega_-(\alpha_-(k))$ for all $k$. 
By Proposition \ref{prop-descr-3col} we have that $f\in\CF$.

Finally we observe that,
for any $j\in\IZ_3$, $\CEE=\cap_{i\in\IZ_3, i\neq j}{\rm Stab}(Z_i)$ because once we fix two colors, then also the third one is fixed.
 \end{proof}
We now recall the definition of closed subgroups of $F$  from \cite[Definition 6.1]{GS4}. Let  $g$ be an element in $F$  which fixes a dyadic rational $\alpha \in (0, 1)$. The components of $g$ at $\alpha$ are the functions
\begin{align*}
&g_1(t)=\left\{\begin{array}{ll}
g(t) & \text{ if } t\in [0,\alpha]\\
t & \text{ if } t\in [\alpha ,1]
\end{array}\right.
\qquad 
g_2(t)=\left\{\begin{array}{ll}
t & \text{ if } t\in [0,\alpha]\\
g(t) & \text{ if } t\in [\alpha ,1]
\end{array}\right.
\end{align*}
A subgroup $H$ of $F$ is said to be closed\footnote{This is not the original definition of closed subgroups, see \cite[Lemma 6.2]{GS4} and the references therein.} if for every function $h \in H$ and every dyadic rational $\alpha$ such that $h$ fixes $\alpha$, the components of $h$ at 
$\alpha$ belong to $H$. 
%The closure $Cl(H)$ of $H$ is the subgroup generated by $H$ and all the components of functions in $H$.
Closed subgroups have been useful in the study of infinite index maximal subgroups of $F$, see e.g. \cite{GS2, TV2}.
In the recent paper \cite{G2} maximal subgroups of $F$ of infinite index were shown to be closed.

As an immediate consequence of previous theorem we get the following result.
\begin{corollary}\label{CEEclosed}
The even part $\CEE$ of the planar $3$-colorable subgroup   is closed.
\end{corollary}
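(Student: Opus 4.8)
The plan is to deduce this from Theorem \ref{lemma-inclu}, which presents $\CEE$ as an intersection $\bigcap_{i \in \IZ_3,\, i \neq j} \stab(Z_i)$ of set-stabilisers inside $F$. The strategy reduces the statement to two purely formal facts: that the stabiliser in $F$ of an arbitrary subset of the dyadic rationals is closed, and that an arbitrary intersection of closed subgroups is closed. Neither requires any information about the specific sets $Z_i$ or the weight function $\tilde\omega$, so I expect the only delicate point to be the verification that a component of a set-stabilising element stabilises the set \emph{onto} itself, rather than merely mapping it inside itself.

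First I would prove the general lemma that $\stab(Z)$ is closed for every $Z \subseteq \CD$. Given $h \in \stab(Z)$ and a dyadic rational $\alpha$ fixed by $h$, the components $h_1, h_2$ at $\alpha$ automatically lie in $F$, because $\alpha$ is dyadic and $h\in F$. The key geometric observation is that, as an increasing self-homeomorphism of $[0,1]$ fixing $\alpha$, the map $h$ sends $[0,\alpha]$ and $[\alpha,1]$ onto themselves, and therefore preserves each of $Z \cap [0,\alpha]$ and $Z \cap [\alpha,1]$.

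I would then check that $h_1$ preserves $Z$. For $t \in Z \cap [0,\alpha]$ one has $h_1(t) = h(t) \in Z \cap [0,\alpha]$, whereas for $t \in Z \cap [\alpha,1]$ one has $h_1(t) = t$; this gives $h_1(Z) \subseteq Z$. The main obstacle I flagged is dispatched here by applying the same computation to $h^{-1} \in \stab(Z)$, which also fixes $\alpha$: its first component at $\alpha$ is precisely $h_1^{-1}$, so $h_1^{-1}(Z) \subseteq Z$ as well, whence $h_1(Z) = Z$ and $h_1 \in \stab(Z)$. The symmetric argument on $[\alpha,1]$ yields $h_2 \in \stab(Z)$, so $\stab(Z)$ is closed.

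Finally I would note that closedness passes to intersections: if every $H_k$ is closed and $h \in \bigcap_k H_k$ fixes a dyadic $\alpha$, then each component of $h$ lies in every $H_k$ and hence in the intersection. Specialising to $H_k = \stab(Z_i)$ for $i \in \IZ_3 \setminus \{j\}$ and invoking Theorem \ref{lemma-inclu} gives that $\CEE$ is closed.
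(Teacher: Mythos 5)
Your proposal is correct and follows the paper's route exactly: the paper derives Corollary \ref{CEEclosed} as an immediate consequence of Theorem \ref{lemma-inclu}, implicitly using the same two standard facts you verify, namely that the setwise stabiliser of any subset of the dyadic rationals is closed and that closedness is preserved under intersections. Your careful check that components stabilise the set onto itself (which in fact follows directly from $h$ mapping $[0,\alpha]$ and $[\alpha,1]$ onto themselves, even without invoking $h^{-1}$) simply fills in details the paper leaves to the reader.
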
 

\begin{proposition}\label{theo-stab}
The even part $\CEE$ of the planar $3$-colorable subgroup  does not coincide with ${\rm Stab}(Z_i)$ for any $i\in \IZ_3$.
\end{proposition}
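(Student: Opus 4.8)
By Theorem \ref{lemma-inclu} we have $\CEE=\bigcap_{i\in\IZ_3,\,i\neq j}{\rm Stab}(Z_i)$ for every $j$, so in particular $\CEE\subseteq{\rm Stab}(Z_i)$ for each $i$, and whenever ${\rm Stab}(Z_i)=\CEE$ we would get ${\rm Stab}(Z_i)\subseteq{\rm Stab}(Z_k)$ for the remaining two indices $k$. Hence the plan is to exhibit, for each $i\in\IZ_3$, an element lying in ${\rm Stab}(Z_i)$ but moving some point of $Z_k$ out of $Z_k$ for some $k\neq i$; such an element witnesses ${\rm Stab}(Z_i)\neq\CEE$. The basic computational tool throughout is that, for a tree pair $f=(T_+,T_-)$, a point entering through a fixed leaf of $T_+$ and leaving through the matched leaf of $T_-$ has its weight $\tilde\omega$ transformed by an affine bijection of $\IZ_3$ depending only on the pair of matched leaves; using the rules in Lemma \ref{lemmaweightsprop} this affine map is computed leaf by leaf.

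First I would treat $i=0$ and $i=2$ with explicit elements. For $i=0$, take $x_0$: a direct check with the weight rules shows that on every matched leaf the induced permutation of $\IZ_3$ is the transposition $(1\,2)$, so $x_0$ fixes $Z_0$ setwise while interchanging $Z_1$ and $Z_2$; in particular $x_0\in{\rm Stab}(Z_0)$, and since $x_0$ sends $1/2\in Z_1$ to $3/4\in Z_2$ it is not in $\CEE$. For $i=2$, take $x_1^2$, which lies in $K_{(2,2)}\setminus\CEE$ by Example \ref{esex12}. Writing out its reduced tree pair (top leaves $0,1000,1001,101,11$ and bottom leaves $0,10,110,1110,1111$) and computing the affine map at each of the five matched leaves, one finds that each map is either the identity or the transposition $(0\,1)$, all of which fix the color $2$. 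Thus $x_1^2\in{\rm Stab}(Z_2)$, and since at least one leaf genuinely realizes $(0\,1)$ the element $x_1^2$ does not fix $Z_0$; hence ${\rm Stab}(Z_2)\neq\CEE$.

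Finally I would obtain $i=1$ by conjugation, which avoids a second tree computation. Since $\CEE$ has index $2$ in $\CE$ it is normal in $\CE$, and $x_0\in\CE$, so $x_0\CEE x_0^{-1}=\CEE$; as $x_1^2\notin\CEE$ this forces $x_0x_1^2x_0^{-1}\notin\CEE$. On the other hand $x_0(Z_2)=Z_1$ (from the first step), so $x_0\,{\rm Stab}(Z_2)\,x_0^{-1}={\rm Stab}\bigl(x_0(Z_2)\bigr)={\rm Stab}(Z_1)$, and therefore $x_0x_1^2x_0^{-1}\in{\rm Stab}(Z_1)\setminus\CEE$, giving ${\rm Stab}(Z_1)\neq\CEE$. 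The main obstacle is the verification in the second step that $x_1^2\in{\rm Stab}(Z_2)$: one must confirm that the mixed pattern of identity and $(0\,1)$ blocks never sends a weight-$2$ point to a different class, i.e.\ that $x_1^2$ scrambles only the classes $Z_0$ and $Z_1$; this is exactly where the explicit weight formulas of Lemma \ref{lemmaweightsprop} are needed, the rest of the argument being formal.
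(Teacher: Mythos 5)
Your proposal is correct, and it follows the same overall strategy as the paper — for each $i\in\IZ_3$ one exhibits an explicit element of ${\rm Stab}(Z_i)\setminus\CEE$ — but the witnesses and the verification method differ in ways worth noting. The paper uses $x_1^2$, $x_2^2$ and $x_0x_1x_2^{-1}x_1^{-1}$ for $Z_2$, $Z_1$, $Z_0$ respectively, and checks membership by colouring the two half-planes and running an induction on the length of binary expansions (replacing leaves by complete trees). You instead (a) use $x_0$ itself for $Z_0$, observing that its three matched leaves all induce the transposition $(1\,2)$ of colours, so $x_0$ preserves $Z_0$ and swaps $Z_1$ with $Z_2$ — a genuinely simpler witness than the paper's; (b) keep $x_1^2$ for $Z_2$ (your reduced tree pair and the leaf-by-leaf permutations, three identities and two copies of $(0\,1)$, are correct); and (c) avoid a third computation entirely by conjugating: since $\CEE$ has index $2$ in $\CE$ it is normal there, so $x_0x_1^2x_0^{-1}\notin\CEE$, while $x_0\,{\rm Stab}(Z_2)\,x_0^{-1}={\rm Stab}(Z_1)$ because $x_0$ interchanges $Z_1$ and $Z_2$ (and this step is insensitive to the paper's composition convention, as $x_0^{\pm1}$ both map $Z_2$ to $Z_1$). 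What your route buys is economy and a reusable tool: the observation that each matched pair of leaves induces a well-defined element of $S_3$ on weights, coming from the simply transitive action of $S_3$ on ordered pairs of distinct colours together with the equivariance of the third-colour rule. That equivariance claim is the one thing you should spell out in a full write-up — it is exactly what justifies passing from finitely many leaf computations to a statement about all dyadic rationals below those leaves — but it follows directly from Lemma \ref{lemmaweightsprop} and the local colouring rule, and the paper's own proofs (of Theorem \ref{lemma-inclu} and of this proposition) rely on the identity-permutation case of the same fact. Conversely, the paper's proof records the slightly stronger fact that each witness lies in exactly one stabiliser, though your witnesses also have this property by your own analysis.
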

\begin{proof}
One can see for example that $x_1^2\in{\rm Stab}(Z_2)\setminus ({\rm Stab}(Z_0)\cup {\rm Stab}(Z_1))$, 
$x_2^2\in{\rm Stab}(Z_1)\setminus ({\rm Stab}(Z_0)\cup {\rm Stab}(Z_2))$, 
$x_0x_1x_2^{-1}x_1^{-1}\in{\rm Stab}(Z_0)\setminus ({\rm Stab}(Z_1)\cup {\rm Stab}(Z_2))$.

We explain how to see this for $x_0x_1x_2^{-1}x_1^{-1}$; for the other elements a similar argument applies.
First we think of the top tree and bottom tree as sitting in the upper-half and lower-half plane, respectively.
We colour the two half planes following Convention \ref{conventionCE},
 so that the leaves of the top tree having color $0$
 meet those of the bottom tree with color $0$.  
We will see soon that  the element is in ${\rm Stab}(Z_0)$, but first we observe that  it is not in ${\rm Stab}(Z_1)\cup {\rm Stab}(Z_2)$ because $x_0x_1x_2^{-1}x_1^{-1}(.01)=.1$, $\tilde\omega(.01)=2$, $\tilde\omega(.1)=1$.
 
The proof of the fact that $x_0x_1x_2^{-1}x_1^{-1}$ is in ${\rm Stab}(Z_0)$ can be done by induction on the length of the binary expansion of the number $t=.a_1\ldots a_n$. 
 Indeed, 
replace each leaf by a complete tree of height $n$
as in the proof of Theorem \ref{lemma-inclu}.
 Now, the leaves of the top tree with weight $0$ meet those of the bottom tree with the same weight (but this is not true for the other weights) and, by construction, there is a leaf in the top tree whose corresponding word is $a_1 \ldots a_n$ (up to addition of zeroes at the end of the word).
It follows that $x_0x_1x_2^{-1}x_1^{-1}$ preserves $Z_0$.
 \end{proof}
%\begin{remark} 
%Compare with the case of $\CF$, 
%where we have the analogue of Theorem \ref{lemma-inclu} 
%representing $\CF$ as the intersection of three stabilisers.
%However, $\CF$ also happens to coincide with each of these stabilisers (\cite[Lemma 2.7, Theorem 2.9]{TV2}).
% \end{remark} 

It turns out that the subgroup $\CE$ is not a closed subgroup. 
%For example, the element 
One can, for example, take the   element $h=kx_0=x_1x_2^{-1}x_1^{-1}$, where $k=x_1x_2^{-1}x_1^{-1}x_0^{-1}$ is the element used in the proof of Proposition \ref{actiontransitive}. 
Now $h$ is not in $\CE$ 
(this can be seen for example by checking that the parity of $\log_2 h'$ at $0$ and at $1$ are not the same), but it is one of the two components at $5/8$ of $x_0x_2 x_3^{-1} x_1^{-2}\in \CE$. This means that $\CE$ is not closed.

%\begin{remark}
If one considers Jones' representation   \cite{Jo19} constructed from the planar algebra of quantum $SO(3)$
with $\delta=2\cos(\pi/6)$ (for more information on this planar algebra we refer to \cite{MPS} or to \cite[Section 4]{AJ} for a small summary of the defining relations) one obtains a representation equivalent to
the quasi-regular representation of $F$ associated with $\CE$.  
Jones proved that this representation and those corresponding   to the other admissible values of $\delta$
 are irreducible in \cite{Jo19}. The approach of Jones is completely different to the one in this article. 
 The former is made by exploiting the graphical calculus of both the Thompson group and the planar algebras, and also by finding a special element of $F$ that, in these representations, has a one dimensional eigenspace corresponding to the eigenvalue $1$, whereas ours relies on the description of $\CEE$ in terms of stabilizers of subsets of dyadic rationals. 
%\end{remark}

%In spite of the fact that $\CE$  acts transitively on dyadic rationals $\CD$
%(see Theorem \ref{actiontransitive})
%and hence it
%cannot be represented as
%an intersection of stabilizers of subsets of $\CD$,
%we still prove that it is a closed subgroup. Another   example of an infinite index closed subgroup acting transitively on dyadic rationals
%appears in \cite[Section 10.3.B]{G}.

In the rest of this section we investigate some quasi-regular representations associated with $\CE$ and $\CEE$.
By means of Theorem \ref{lemma-inclu} 
we will
obtain the following theorem. 
\begin{theorem}\label{theo2}
It holds
\begin{enumerate}
\item Comm$_{F}(\CEE)=\CE$;
\item Comm$_{K_{(2,2)}}(\CEE)=$Comm$_{K_{(2,1)}}(\CEE)=\CEE$.
\end{enumerate} 
\end{theorem}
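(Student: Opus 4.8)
The plan is to reduce both statements to a single structural input, Jones' result that $\CE$ is self-commensurating in $F$, i.e. $\operatorname{Comm}_F(\CE)=\CE$, together with the fact that $\CEE$ and $\CE$ are commensurable. First I would record that $[\CE:\CEE]=2$, so that $\CEE$ is normal in $\CE$ and, in particular, $\CEE\cap\CE=\CEE$ has finite index in each of $\CEE$ and $\CE$; thus $\CEE$ and $\CE$ are commensurable subgroups of $F$. Commensurability is an equivalence relation that is preserved by conjugation, and commensurable subgroups always share their commensurator: if $A\cap B$ has finite index in both $A$ and $B$, then for every $g$ one has $gAg^{-1}\sim A$ if and only if $gBg^{-1}\sim B$ (write $\sim$ for commensurability and chain $gBg^{-1}\sim gAg^{-1}\sim A\sim B$). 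Applying this with $A=\CE$ and $B=\CEE$ gives $\operatorname{Comm}_F(\CEE)=\operatorname{Comm}_F(\CE)=\CE$, which is statement (1). The inclusion $\CE\subseteq\operatorname{Comm}_F(\CEE)$ is in any case clear from normality of $\CEE$ in $\CE$, so the only genuine content is the reverse inclusion, supplied by Jones' theorem \cite{Jo19}.

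For part (2) I would use the elementary localisation of the commensurator: whenever $\CEE\le K\le F$, an element $g\in K$ commensurates $\CEE$ inside $K$ exactly when it commensurates $\CEE$ inside $F$, since the condition only involves $\CEE$, $g\CEE g^{-1}$ and their intersection, all of which lie in $K$ irrespective of the ambient group. Hence $\operatorname{Comm}_K(\CEE)=K\cap\operatorname{Comm}_F(\CEE)=K\cap\CE$ for both $K=K_{(2,2)}$ and $K=K_{(2,1)}$ (note $\CEE\subseteq K_{(2,2)}\subseteq K_{(2,1)}$). For $K_{(2,2)}$ this reads $K_{(2,2)}\cap\CE=\CEE$, which is the definition of $\CEE$. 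For $K_{(2,1)}$ I would invoke Proposition \ref{propintersectionK12}: since $\CE\cap(K_{(2,1)}\setminus K_{(2,2)})=\emptyset$, any element of $\CE\cap K_{(2,1)}$ automatically has $\log_2 f'(1)$ even, hence lies in $K_{(2,2)}$; therefore $K_{(2,1)}\cap\CE=K_{(2,2)}\cap\CE=\CEE$. This yields (2).

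The conceptual weight of the argument sits entirely in Jones' self-commensurating result for $\CE$; everything else is bookkeeping with commensurability and with the parity constraint of Proposition \ref{propintersectionK12}. An alternative, more self-contained route is where the stabiliser description of Theorem \ref{lemma-inclu} would enter: one would prove $\operatorname{Comm}_F(\CEE)\subseteq\CE$ directly by writing $\CEE=\stab(Z_1)\cap\stab(Z_2)$, so that $g\CEE g^{-1}=\stab(gZ_1)\cap\stab(gZ_2)$, and then showing that for $g\notin\CE$ the partition $\{gZ_i\}_{i\in\IZ_3}$ of the dyadic rationals disagrees with $\{Z_i\}_{i\in\IZ_3}$ on an infinite set, forcing $\CEE\cap g\CEE g^{-1}$ to have infinite index in $\CEE$. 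I expect this direct comparison of the two weight colourings to be the main obstacle, as it effectively re-derives the self-commensurating property of $\CE$ by hand; routing the proof through Jones' theorem together with the commensurability transfer is what circumvents it, and I would present that as the main line of argument.
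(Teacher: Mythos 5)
Your proof is correct, but it takes a genuinely different route from the paper's. The paper argues in the reverse order and keeps everything self-contained: imitating Golan--Sapir's argument for $\vec{F}$ \cite{GS}, it first proves $\operatorname{Comm}_{K_{(2,1)}}(\CEE)=\CEE$ directly, showing that for $h\in K_{(2,1)}\setminus\CEE$ no large power $x_0^{-2n}$ can lie in $h\CEE h^{-1}$ --- this is where the weight machinery enters: the stabiliser description of Theorem \ref{lemma-inclu} and the asymptotic Lemma \ref{lemma3} produce a point of some $Z_i$ that $h^{-1}x_0^{-2n}h$ moves out of $Z_i$ --- so that $|\CEE:\CEE\cap h\CEE h^{-1}|$ is infinite; it then deduces part (1) by checking $x_0\CEE x_0^{-1}=\CEE$ and invoking the same localisation identity you use, concluding $\operatorname{Comm}_F(\CEE)=\langle x_0,\CEE\rangle=\CE$ via Theorems \ref{genE} and \ref{genE1}. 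You run the logic backwards: part (1) comes for free from Jones' result $\operatorname{Comm}_F(\CE)=\CE$ through the (correct) fact that commensurable subgroups, here of index $2$, share their commensurator, and part (2) then follows from the (also correct) identity $\operatorname{Comm}_K(\CEE)=K\cap\operatorname{Comm}_F(\CEE)$ combined with Proposition \ref{propintersectionK12}, which gives $K_{(2,1)}\cap\CE=K_{(2,2)}\cap\CE=\CEE$. There is no circularity in your use of \cite{Jo19}, since that result concerns $\CE\le F$ while Corollary \ref{cor-irred} concerns $\CEE$ inside $K_{(2,1)}$ and $K_{(2,2)}$. The trade-off: your route reduces the theorem to formal commensurator bookkeeping, but its entire content then rests on the external input of \cite{Jo19} (irreducibility plus Mackey), whereas the paper's argument is independent of \cite{Jo19} and, combined with your own transfer lemma, even re-proves Jones' self-commensuration of $\CE$ --- presumably why the authors, billing Theorem \ref{theo2} as their main result, chose the longer path. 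Your closing remark correctly identifies that proving $\operatorname{Comm}_F(\CEE)\subseteq\CE$ without \cite{Jo19} is where the real work lies; that is exactly the role Lemma \ref{lemma3} and Theorem \ref{lemma-inclu} play in the paper.
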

\begin{corollary}\label{cor-red} 
The quasi-regular representation of $F$ associated with $\CEE$ is reducible. 
\end{corollary}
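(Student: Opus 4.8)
The plan is to read off reducibility from the failure of $\CEE$ to be self-commensurating in $F$, which is exactly Theorem \ref{theo2}(1). The conceptual input is the commensurator criterion for quasi-regular representations: for a subgroup $H\le G$, elements of $\mathrm{Comm}_G(H)$ lying outside $H$ yield nontrivial bounded self-intertwiners (Hecke operators) of $\lambda_{G/H}$ on $\ell^2(G/H)$, whereas $\mathrm{Comm}_G(H)=H$ forces every such intertwiner to be scalar, giving irreducibility. By Theorem \ref{theo2}(1) we have Comm$_F(\CEE)=\CE$, and since $[\CE:\CEE]=2$ (shown above, with $\CE=\CEE\sqcup x_0\CEE$), the commensurator $\CE$ strictly contains $\CEE$. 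Thus $\CEE$ is not self-commensurating in $F$, and the criterion gives reducibility at once.

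Rather than invoke the criterion as a black box, I would exhibit the intertwiner explicitly, which keeps the argument self-contained. The key observation is that $x_0\in\CE\setminus\CEE$ \emph{normalizes} $\CEE$: as $\CEE$ has index $2$ in $\CE$ it is normal in $\CE$, and $x_0\in\CE$, so $x_0\CEE x_0^{-1}=\CEE$. First I would use this to define the right-translation operator $\rho$ on $\ell^2(F/\CEE)$ by $\rho\,\delta_{f\CEE}:=\delta_{fx_0\CEE}$. Because $x_0$ normalizes $\CEE$, this is well defined on cosets (if $f'=fh$ with $h\in\CEE$, then $f'x_0\CEE=fx_0(x_0^{-1}hx_0)\CEE=fx_0\CEE$) and is a unitary, being a permutation of the canonical orthonormal basis $\{\delta_{f\CEE}\}$. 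A one-line check shows $\rho$ lies in the commutant of the left regular action: for every $k\in F$, $\lambda(k)\rho\,\delta_{f\CEE}=\delta_{kfx_0\CEE}=\rho\,\lambda(k)\delta_{f\CEE}$.

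Finally I would verify that $\rho$ is not scalar. Since $x_0\notin\CEE$ we have $x_0\CEE\neq\CEE$, so $\rho\,\delta_{\CEE}=\delta_{x_0\CEE}$ is a basis vector distinct from $\delta_{\CEE}$; hence $\rho$ moves a basis vector off its own line and cannot be a multiple of the identity. The commutant of the quasi-regular representation associated with $\CEE$ therefore contains a non-scalar operator, and by Schur's lemma the representation is reducible.

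I expect no genuine obstacle: the entire content sits in Theorem \ref{theo2}(1), and what remains is the standard packaging of the commensurator criterion. The only point needing (minor) care is confirming that $x_0$ truly normalizes $\CEE$, which follows cleanly from the index-$2$ computation; working with a normalizing element, rather than a general commensurating one, reduces the intertwiner to a single right translation and makes boundedness immediate.
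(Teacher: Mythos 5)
Your proposal is correct and takes essentially the same route as the paper: the paper's entire proof is that reducibility "follows from part 1) of Theorem \ref{theo2} by a result of Mackey," i.e.\ exactly the commensurator criterion you invoke, applied to $\mathrm{Comm}_F(\CEE)=\CE\supsetneq\CEE$. The only difference is that you inline the easy direction of that criterion by exhibiting the right-translation intertwiner coming from the normalizing element $x_0$ (the identity $x_0\CEE x_0^{-1}=\CEE$ you need is in fact verified inside the paper's proof of Theorem \ref{theo2}), whereas the paper cites Mackey as a black box; your verification of well-definedness, intertwining, and non-scalarity is correct.
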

\begin{proof}[Proof of Corollary \ref{cor-red}]
It follows from 
part 1) of Theorem \ref{theo2}   by a result of Mackey
 \cite{Ma}.
 \end{proof}
 
The following isomorphisms were established in  \cite[Section 3]{TV2}  
\begin{align*}
&\begin{array}{l}
\alpha: \; F\to K_{(2,1)}\\
 x_0\mapsto x_0x_1x_0^{-3}\\
 x_1\mapsto x_0x_1^2x_0^{-3}\\
\end{array}\qquad 
\begin{array}{l}
 \theta: F\to K_{(2,2)}\\
 x_0 \mapsto x_0x_1x_0^{-3}x_1^{-1}\\
 x_1 \mapsto x_0x_1^2x_0^{-3}
\end{array}
\end{align*}
\begin{corollary}\label{cor-irred}
The quasi-regular representations of $F$ associated with $\alpha^{-1}(\CEE)$ and $\theta^{-1}(\CEE)$ are irreducible.
\end{corollary}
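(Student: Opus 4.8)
The plan is to deduce the statement directly from Theorem \ref{theo2} together with the same result of Mackey \cite{Ma} already invoked for Corollary \ref{cor-red}. Recall that Mackey's criterion states that, for a countable group $G$ and a subgroup $H$, the quasi-regular representation of $G$ associated with $H$ is irreducible precisely when $H$ is self-commensurating, i.e. $\operatorname{Comm}_G(H)=H$. Since $\CEE\subset K_{(2,2)}\subset K_{(2,1)}$, both commensurators $\operatorname{Comm}_{K_{(2,1)}}(\CEE)$ and $\operatorname{Comm}_{K_{(2,2)}}(\CEE)$ are meaningful and are computed in Theorem \ref{theo2}(2) to equal $\CEE$; the task is to transfer this self-commensurating property across the isomorphisms $\alpha$ and $\theta$.

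First I would record the elementary fact that the commensurator is functorial under group isomorphisms: if $\phi\colon G_1\to G_2$ is an isomorphism and $L\leq G_1$, then $\phi(\operatorname{Comm}_{G_1}(L))=\operatorname{Comm}_{G_2}(\phi(L))$, because $\phi$ carries the conjugates $gLg^{-1}$ and the finite-index relation between them to the corresponding data for $\phi(L)$. Applying this to the isomorphism $\alpha\colon F\to K_{(2,1)}$ with $L=\alpha^{-1}(\CEE)$, so that $\alpha(L)=\CEE$, and using $\operatorname{Comm}_{K_{(2,1)}}(\CEE)=\CEE$ from Theorem \ref{theo2}(2), I would obtain
\[
\alpha\bigl(\operatorname{Comm}_F(\alpha^{-1}(\CEE))\bigr)=\operatorname{Comm}_{K_{(2,1)}}(\CEE)=\CEE,
\]
and hence, after applying $\alpha^{-1}$, the equality $\operatorname{Comm}_F(\alpha^{-1}(\CEE))=\alpha^{-1}(\CEE)$. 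Thus $\alpha^{-1}(\CEE)$ is self-commensurating in $F$, and Mackey's criterion yields irreducibility of the associated quasi-regular representation.

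The argument for $\theta$ is verbatim the same, now using the isomorphism $\theta\colon F\to K_{(2,2)}$ and the equality $\operatorname{Comm}_{K_{(2,2)}}(\CEE)=\CEE$ from Theorem \ref{theo2}(2), giving $\operatorname{Comm}_F(\theta^{-1}(\CEE))=\theta^{-1}(\CEE)$ and therefore irreducibility of the second representation. I expect no real obstacle here once Theorem \ref{theo2} is available: the only point that needs care is checking that self-commensuration is preserved under the isomorphisms $\alpha$ and $\theta$, so that the computation carried out inside $K_{(2,1)}$ and $K_{(2,2)}$ transfers to $\alpha^{-1}(\CEE)$ and $\theta^{-1}(\CEE)$ viewed as subgroups of the full group $F$. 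All the genuine content is contained in Theorem \ref{theo2}.
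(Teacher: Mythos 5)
Your proposal is correct and is exactly the paper's intended argument: Theorem \ref{theo2}(2) gives $\operatorname{Comm}_{K_{(2,1)}}(\CEE)=\operatorname{Comm}_{K_{(2,2)}}(\CEE)=\CEE$, this transfers through the isomorphisms $\alpha$ and $\theta$ to show $\alpha^{-1}(\CEE)$ and $\theta^{-1}(\CEE)$ are self-commensurating in $F$, and Mackey's criterion then yields irreducibility. The only detail you make explicit that the paper leaves implicit is the (elementary) functoriality of commensurators under isomorphisms, which is a fine point to record.
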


The proof of  Theorem  \ref{theo2} is similar to that for the oriented subgroup $\vec{F}$ done by Golan and Sapir    \cite[Theorem 4.15]{GS}. 
As in their proof, we first describe the action of any element of $F$ on a sufficiently
 small neighbourhood of $0$ and then use 
  the description of $\CEE$ in terms of stabilizers of subsets of dyadic rationals to show that $\CEE$ coincides with its commensurator. 
First we present a technical lemma.
\begin{lemma}\label{lemma3}
Let $g\in F$. Then for any $i\in\IZ_3$, there exists $m\in\IN$ 
\begin{enumerate}
\item if $\log_2 f(0)\in2\IZ$,  for any $t\in (0,1/2^m)\cap Z_i$, $\tilde\omega(t)\equiv_3 \tilde\omega(g(t))$;
\item if $\log_2 f(0)\in2\IZ+1$,  for any $t\in (0,1/2^m)\cap Z_0$, 
$g(t)\in Z_0$;
\item if $\log_2 f(0)\in2\IZ+1$,  for any $t\in (0,1/2^m)\cap Z_1$, 
$g(t)\in Z_2$;
\item if $\log_2 f(0)\in2\IZ+1$,  for any $t\in (0,1/2^m)\cap Z_2$, 
$g(t)\in Z_1$.
\end{enumerate}
\end{lemma}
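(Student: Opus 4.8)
The plan is to reduce everything to the elementary fact that every $g\in F$ is \emph{eventually linear} at the origin. Since $g(0)=0$ and $g$ is piecewise linear with finitely many dyadic breakpoints, there is a dyadic number $1/2^{p}$ on which $g$ agrees with the single affine map $t\mapsto 2^{k}t$, where $k:=\log_2 g'(0)$. I would therefore take $m$ to be any integer exceeding both $p$ and $|k|$. Then for every $t\in(0,1/2^{m})$ we have $g(t)=2^{k}t$, and since $t<2^{-m}$ its binary expansion begins with at least $m$ zeros. Multiplication by $2^{k}$ merely deletes ($k>0$) or inserts ($k<0$) leading zeros, and because $m>|k|$ the image $g(t)$ still lies in $(0,1)$ and keeps the same ``core'' as $t$: writing $t=.0^{j}W$, where $W$ is the nonempty word (ending in $1$) beginning at the first digit $1$ of $t$, we get $g(t)=.0^{\,j-k}W$ with $j\ge m$, so that only the number $j$ of leading zeros changes, by $j\mapsto j-k$.

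The second step is to record how $\tilde\omega$ depends on $j$. Peeling off one leading zero by Formula \ref{formula1A} gives $\tilde\omega(.0^{j}W)=\omega(0^{j-1}W)$, and since a block of leading zeros contributes nothing to $\omega$ while shifting the alternating signs in its definition, $\omega(0^{j-1}W)=(-1)^{j-1}\omega(W)\pmod 3$. Hence $\tilde\omega(t)=(-1)^{j-1}\omega(W)$ depends on $t$ only through the parity of $j$ and the fixed residue $\omega(W)$. This is the single genuinely computational point, and it is immediate from $\omega(.a_1\ldots a_n)=\sum_{j}(-1)^{j}a_j\pmod 3$.

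Combining the two steps yields the transformation law $\tilde\omega(g(t))=(-1)^{(j-k)-1}\omega(W)=(-1)^{-k}\tilde\omega(t)=(-1)^{k}\tilde\omega(t)\pmod 3$ on the interval $(0,1/2^{m})$. If $k=\log_2 g'(0)$ is even this reads $\tilde\omega(g(t))=\tilde\omega(t)$, which is exactly conclusion (1). If $k$ is odd it reads $\tilde\omega(g(t))=-\tilde\omega(t)$; since negation on $\IZ_3$ fixes $0$ and interchanges $1$ and $2$, this gives conclusions (2), (3) and (4) simultaneously, after translating through the identification of each $Z_i$ with a level set of $\tilde\omega$. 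The main obstacle, and the only place needing care, is the bookkeeping of the first paragraph: one must choose a single $m$ valid for all $t\in(0,1/2^{m})$ and all three residue classes, confirm that the core word $W$ is genuinely untouched by $g$ on this interval, and treat the cases $k>0$ and $k<0$ uniformly through the one estimate $m>|k|$. Once the law $\tilde\omega\circ g=(-1)^{k}\tilde\omega$ near $0$ is established, the four statements are just its reformulation.
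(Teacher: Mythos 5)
Your proposal is correct and follows essentially the same route as the paper's proof: both exploit that $g$ acts as $t\mapsto 2^{k}t$ near the origin (so it only adds or deletes leading zeros), and both then read off the effect on $\tilde\omega$ from the formula $\tilde\omega(0\alpha)=\omega(\alpha)$ together with the alternating-sign definition of $\omega$, concluding that $\tilde\omega\circ g=(-1)^{k}\tilde\omega$ near $0$, with negation on $\IZ_3$ fixing $0$ and swapping $1$ and $2$. The only cosmetic difference is that you handle $k>0$ and $k<0$ uniformly with one choice $m>\max\{p,|k|\}$, where the paper splits these into two cases with separate choices of $m$.
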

\begin{proof}
By definition, $g(t)=2^l t$ for all $t\in I=[0,2^{-r}]$, where $r\in\IN$, $l\in\IZ$.
If $l\leq 0$, $g(t)$ simply adds $l$ zeros at the beginning of the binary form of $t$. Therefore, we have two cases depending on whether $\log_2 g'(0)$ is in $2\IZ$ or  $2\IZ +1$. 
In the first case it holds $\tilde\omega(t)=\tilde\omega(g(t))$ because of Lemma \ref{lemmaweightsprop}-(3.3), 
while in the second $\tilde\omega(t)\equiv_3 -\tilde\omega(g(t))$  because of Lemma \ref{lemmaweightsprop}-(3.1) and the formula $\omega(.a_1\ldots a_n):= \sum_{i=1}^n (-1)^i a_i$. 
In both cases, it suffices to take $m=r$.

If $l>0$, take $m=\max\{r,l\}+1$. Since $m> r$, the binary word for $t$ begins with at least $l$ zeroes and $g$ erases $l$ of them.  We have two cases as before: $\log_2 g'(0)$ is in $2\IZ$ or  $2\IZ +1$. In the first case $\tilde\omega(t)=\tilde\omega(g(t))$, while in the second $\tilde\omega(t)\equiv_3 -\tilde\omega(g(t))$. 
\end{proof} 

\begin{proof}[Proof of Theorem \ref{theo2}]
First we determine the commensurator of $\CEE$ in $K_{(2,1)}$.
Let $h\in K_{(2,1)}\setminus\CEE$ and set $I:=|\CEE:\CEE\cap h\CEE h^{-1}|$. If $I<\infty$, then there is an $r\in \IN$ such that $x_0^{-r}\in h\CEE h^{-1}$, or equivalently $h^{-1}x_0^{-r}h\in\CEE$ ($x_0$ is one of the generators of $\CEE$).
We will show that for all $n$ big enough, $h^{-1}x_0^{-n}h\not\in\CEE$ (and thus reach a contradiction). 
 By Lemma \ref{lemma3}-(1), there is an $m$ such $\tilde\omega(h(t))\equiv_3\tilde\omega(t)$ for all $t\in [0,2^{-m}]\cap Z_i$, $i\in\IZ_3$.
Since $h\not\in\CEE$, there exists $t\in Z_i$, for some $i\in\IZ_3$, such that $t_1:=h^{-1}(t)\not\in Z_i$. We observe that for all $l\in\IN$, we have $x_0^{-2jl}(t_1)\not\in Z_i$.
There exists an $n\in\IN$ such that $x_0^{-2jn}(t_1)<2^{-m}$. 
Indeed, 
$$
x_0^{-2}(t)=\left\{ 
\begin{array}{ll}
.000\alpha & \text{ if } t=.0\alpha\\
.001\alpha & \text{ if } t=.10\alpha\\
.01\alpha & \text{ if } t=.110\alpha\\
.1\alpha & \text{ if } t=.111\alpha
\end{array}
\right.
$$
We observe that $h(x_0^{-2jn}(t_1))\not\in Z_i$. Then, $h^{-1}x_0^{-2jn}h(t)=h(x_0^{-2jn}(t_1))\not\in Z_i$ and so $h^{-1}x_0^{-2jn}h\not\in {\rm Stab}(Z_i)$.
In particular $h^{-1}x_0^{-2jn}h\not\in \CEE\subset\cap_{i\in\IZ_3}{\rm Stab}(Z_i)$.

 Now we determine the commensurator of $\CEE$ in $F$.
We will show that $x_0$ is in the commensurator of $\CEE$.
We observe that 
$x_0\CEE x_0^{-1}\subset K_{(2,2)}\cap\CE =\CEE$
and
$x_0^{-1}\CEE x_0\subset K_{(2,2)}\cap\CE =\CEE$
 because $x_0\in\CE$.
Therefore, we have 
$\CEE\leq x_0^{-1}\CEE x_0\leq\CEE$ and thus $\CEE=x_0\CEE x_0^{-1}$.
In particular, $x_0$ is in the commensurator of $\CEE$.
Now let $h\in F\setminus \CEE$ such that 
$I:=|\CEE :\CEE\cap h\CEE h^{-1}|<\infty$.
If $h\in K_{(2,1)}$, then by the previous discussion $h\in \CEE$. 
Suppose that $h\in F\setminus K_{(2,1)}$, that is, $\log_2 h'(0)\in 2\IN_0+1$.
If $h\in$ Comm$_F(\CEE)$, then $x_0h\in$Comm$_F(\CEE)$ as well. 
By construction $x_0h\in K_{(2,1)}\cap$Comm$_F(\CEE)=$Comm$_{K_{(2,1)}}(\CEE)=\CEE$. 
It follows that Comm$_F(\CEE)=\langle x_0,\CEE\rangle$.
\end{proof} 

We end this paper with a small summary about Jones' subgroups   and associated representations.
% comments on maximal subgroups of $F$ and on self-commensurating subgroups.
The subgroups $\vec{F}$ and $\CF$ are weakly maximal (i.e., they are maximal among subgroups of infinite index) and self-commensurating, and hence the associated  quasi-regular representations of $F$ are irreducible.
  The planar $3$-colorable subgroup $\CE$ 
   %and $\CEE$ 
   is not weakly maximal (as can be deduced from Golan's computations of its closure)
%there is only partial evidence that they 
but it is self-commensurating, hence also the associated  quasi-regular representation of $F$  is irreducible. 
The subgroup $\CEE$ is self-commensurating in the rectangular subgroups $K_{(2,1)}$ and $K_{(2,2)}$, thus the quasi-regular representations of $F$ 
associated with $\alpha^{-1}(\CEE)$ and $\theta^{-1}(\CEE)$
are irreducible. Other examples of maximal and self-commensurating subgroups in $F$  are the stabilizers of points in the unit interval  \cite{Sav}. %,GS3}.
%These can be grouped into three isomorphism classes depending on the point being a dyadic rational, a rational number non dyadic, an irrational number, \cite{Sav,GS3}.
%are not of this type because they are contained in the $Cl(\CE)$, which has infinite index in $F$.
Further examples of weakly maximal subgroups in $F$ were provided in \cite{G, TV2, G2}.
% Golan showed that there exists a subgroup of infinite index and maximal isomorphic to
%Jones' subgroups\footnote{For $p=3$, the subgroup $\vec{F}_3$ should not be confused with the ternary oriented subgroup $\vec{F}_3$ of \cite{TV}, which is a subgroup of the Brown-Thompson group $F_3$.} $\vec{F}_p$, for every prime $p$. Each $\vec{F}_p$ is isomorphic to the Brown-Thompson group $F_{p+1}$. Therefore, at this point there are countably many isomorphism type of maximal infinite index subgroups known.

\section*{Acknowledgements}
The authors would like to thank Gili Golan for pointing out to us a mistake in the first version of this paper. %, where we claimed that   the action of $\CE$ was transitive on the dyadic rationals. % and as consequence that $\CE$ was closed.
 V.A. acknowledges the support %from the Swiss National Science foundation through the SNF project no. 178756 (Fibred links, L-space covers and algorithmic knot theory) and 
 of the  Mathematisches Institut of the University of  Bern.
 T.N. acknowledges support of  Swiss NSF grants 200020-178828 and 200020-200400.

\section*{References}
\begin{biblist}
\bibselect{bib}
\end{biblist}

\end{document}